\theoremstyle{plain}
\newtheorem*{theorem*}{Theorem}
\newtheorem*{lemma*}{Lemma}
\newtheorem*{proposition*}{Proposition}
\newtheorem*{conjecture*}{Conjecture}
\newtheorem*{corollary*}{Corollary}
\newtheorem*{problem*}{Problem}
\newtheorem{theorem}{Theorem}[section]
\newtheorem{lemma}[theorem]{Lemma}
\newtheorem{proposition}[theorem]{Proposition}
\newtheorem{corollary}[theorem]{Corollary}
\newtheorem{conjecture}[theorem]{Conjecture}
\theoremstyle{definition}
\newtheorem{definition}[theorem]{Definition}
\newcommand{\bbA}{\mathbb A}
\newcommand{\PP}{\mathbb P}
\newcommand{\QQ}{\mathbb Q}
\newcommand{\CC}{\mathbb C}
\newcommand{\OO}{\mathcal O}
\newcommand{\RR}{\mathbb R}
\newcommand{\Nef}[1]{\operatorname{Nef}({#1})^e}
\newcommand{\Mov}[1]{\operatorname{Mov}({#1})^e}
\newcommand{\nef}[1]{\operatorname{Nef}({#1})}
\newcommand{\mov}[1]{\operatorname{Mov}({#1})}
\newcommand{\Pic}{\operatorname{Pic}}
\newcommand{\bbQ}{\mathbb Q}
\newcommand{\bbR}{\mathbb R}
\newcommand{\iso}{\cong}
\newcommand{\Aut}{\operatorname{Aut}}
\newcommand{\PsAut}{\operatorname{PsAut}}
\newcommand{\Curv}{\operatorname{Curv}}
\newcommand{\arrow}{\rightarrow}
\newcommand{\ZZ}{\mathbb{Z}}
\newcommand{\Bl}{\operatorname{Bl}}
\newcommand{\Id}{\operatorname{Id}}
\begin{document}
\title{Fano manifolds of index $n-2$ and the cone conjecture}

\author{Izzet Coskun}
\email{coskun@math.uic.edu}
\address{Department of Mathematics, Statistics and CS \\University of Illinois at Chicago, Chicago, IL 60607}

\author{Artie Prendergast-Smith}
\email{A.Prendergast-Smith@lboro.ac.uk}
\address{Department of Mathematical Sciences, Loughborough University LE11 3TU UK}

\subjclass[2000]{14E30, 14E07 (primary), 14J35, 14J40, 14J50, 14D06 (secondary)}
\keywords{The Morrison--Kawamata cone conjecture, Fano manifolds, $K3$ surfaces, automorphisms, pseudo-automorphisms}
\thanks{During the preparation of this article the first author was partially supported by the NSF CAREER grant DMS-0950951535; the second author was partially supported by EPSRC First Grant EP/L026570/1.}

\begin{abstract}
The Morrison--Kawamata Cone Conjecture predicts that the action of the automorphism group on the effective nef cone and the action of the pseudo-automorphism group on the effective movable cone of a klt Calabi-Yau pair have rational, polyhedral fundamental domains. In \cite{CPS}, we proved the conjecture for certain blowups of Fano manifolds of index $n-1$. In this paper, we consider the Morrison--Kawamata conjecture for blowups of Fano manifolds of index $n-2$.  
\end{abstract}

\maketitle

\section{Introduction}
Cones of divisors play a central role in birational geometry. Consequently, it is a basic issue to understand these cones for important classes of algebraic varieties. By the Cone Theorem of Mori-Kawamata-Koll\'{a}r-Reid-Shokurov \cite[Theorem 3.7]{KollarMori}, the nef cone of a Fano manifold is rational polyhedral. By Birkar--Cascini--Hacon--McKernan \cite{BCHM}, the effective and movable cones of a Fano manifold are also rational polyhedral. In contrast, as soon as the anti-canonical bundle $-K_X$ is not ample, these cones may have infinitely many extremal rays. For example, the nef cone of a $K3$ surface with infinitely many  $(-2)$-curves is not polyhedral. 

Nevertheless, the Morrison--Kawamata conjecture \cite{Morrison1992,Kawamata1997,Totaro2008} predicts that for a Calabi-Yau manifold (or more generally a klt Calabi-Yau pair), the effective nef and the effective movable cones have rational, polyhedral fundamental domains for the action of the automorphism and the pseudo-automorphism groups, respectively. (See \S \ref{sect-intro} for the precise statement.) Despite significant progress in many cases, the conjecture remains  open, even for Calabi--Yau threefolds. The essential difficulty is producing enough automorphisms on a variety with complicated birational geometry.

In this paper, we study certain blowups of Fano manifolds of index $n-2$ and verify the Morrison--Kawamata conjecture for them. For $n \geq 4$, let $Z$ be an $n$-dimensional Fano manifold of index $n-2$ over an algebraically closed field of characteristic zero. These manifolds are classified (\cite{IP}, \cite{Mukai}, see \S \ref{sect-fano}). Write $-K_Z = (n-2)H$ for an ample divisor $H$ and let $V \subset |H|$ be a general $(n-2)$-dimensional general linear system with base locus $C$. Let $X$ be the blowup of $Z$ along $C$. Then $|-K_X|$ is a basepoint-free linear system that defines a $K3$ fibration over $\PP^{n-2}$. By Bertini's Theorem, we can take a smooth divisor $D$ in the linear system $|-2K_X|$ and set $\Delta= \frac{1}{2} D$ . Then the pair $(X, \Delta)$ is a klt Calabi-Yau pair and the Morrison--Kawamata conjecture applies. Our first result is the following.

\begin{theorem*}[\ref{thm-nef}]
The nef cone $\nef {X}$ is contained in the effective cone and is a rational, polyhedral cone. In particular, the Morrison--Kawamata conjecture on the effective nef cone holds for $X$.
\end{theorem*}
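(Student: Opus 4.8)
The plan is to determine $\nef{X}$ completely by exploiting the two natural morphisms out of $X$. First I would record the numerical data of the blowup. Writing $\pi\colon X \to Z$ for the blowup and $E$ for the exceptional divisor, the curve $C$ has codimension $n-1$ in $Z$, so the blowup formula gives $K_X = \pi^* K_Z + (n-2)E$, whence
\[
-K_X = (n-2)\bigl(\pi^* H - E\bigr).
\]
Setting $L := \pi^* H - E$, this class is exactly the one defining the $K3$ fibration $f\colon X \to \PP^{n-2}$ of the hypothesis, and $-K_X = (n-2)L = f^*\OO_{\PP^{n-2}}(n-2)$. In the case $\Pic(Z) = \ZZ H$ of the prime Fano (Mukai) manifolds one has $\Pic(X) = \pi^*\Pic(Z) \oplus \ZZ E$ of rank two, so $N^1(X)_\RR$ is spanned by $L$ and $E$ (equivalently by $\pi^* H = L + E$ and $E$).

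Next I would match the two edges of the nef cone with the two contractions. The map $\pi$ contracts the lines $\ell$ in the $\PP^{n-2}$-fibers of $E \to C$; there $\pi^* H \cdot \ell = 0$ and $E \cdot \ell = -1$, so $L \cdot \ell = 1$. The fibration $f$ contracts the curves $\gamma$ lying in the $K3$ fibers $F$; there $L \cdot \gamma = 0$, while $\pi^* H \cdot \gamma = H \cdot \pi_*\gamma > 0$ since $H$ is ample on $Z$. Thus both $\pi^* H$ and $L$ are nef (pullbacks of ample classes along morphisms) and each lies on the boundary of $\nef{X}$. Since they are non-proportional and the nef cone in a two-dimensional Néron–Severi space is bounded by exactly two rays, this forces
\[
\nef{X} = \langle \pi^* H, \, L \rangle = \langle L + E, \, L \rangle,
\]
which is spanned by integral classes and hence rational polyhedral.

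For the containment in the effective cone I would exhibit the two generators as effective: $L = \pi^* H - E$ is represented by the proper transforms of the members of $|H|$ through $C$ (indeed $(n-2)L = -K_X$ is basepoint-free), while $\pi^* H = L + E$ is a sum of the effective classes $L$ and $E$. As $\Eff(X)$ is convex, $\nef{X} \subseteq \langle E, L\rangle \subseteq \Eff(X)$. The Morrison–Kawamata statement then follows formally: once $\nef{X}$ is rational polyhedral and contained in the effective cone, $\Aut(X)$ permutes its finitely many extremal rays and hence acts through a finite quotient, for which the whole cone furnishes a rational polyhedral fundamental domain.

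The step requiring the most care is the claim that $\nef{X}$ has \emph{exactly} the two edges $\pi^* H$ and $L$, equivalently that $\overline{NE}(X)$ carries no extremal ray beyond $[\ell]$ and the $K3$-fiber class. This rests on two geometric inputs: that $\Pic(X)$ genuinely has rank two, which uses $\rho(Z) = 1$; and that $f$ is a genuine fibration contracting nothing outside its fibers, so that $L$ lies on the boundary rather than the interior of the nef cone. I expect verifying the fibration structure—that the general fiber is a smooth $K3$ surface and that $f$ contracts only curves in the fibers—to be the main technical hurdle. For manifolds $Z$ with $\rho(Z) > 1$ I would instead replace the ray $\RR_{\geq 0}\,\pi^* H$ by the rational polyhedral cone $\pi^*\nef{Z}$ (rational polyhedral by the Cone Theorem, as $Z$ is Fano) and check that $\nef{X} = \pi^*\nef{Z} + \RR_{\geq 0}\,L$ remains rational polyhedral and effective.
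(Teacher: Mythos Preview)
Your treatment of the $\rho(Z)=1$ case is correct and essentially identical to the paper's: in a two-dimensional N\'eron--Severi space you pin down the two edges of $\nef{X}$ by the blowdown $\pi$ and the $K3$ fibration $f$, and both generators are visibly effective.

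The gap is in the last paragraph. For $\rho(Z)>1$ you assert $\nef{X}=\pi^*\nef{Z}+\RR_{\ge 0}L$ and propose to ``check'' it, but this equality is the whole content of the theorem in higher rank and does not follow from what you have written. The containment $\supseteq$ is immediate (pullbacks of nef classes are nef, $L$ is basepoint-free). The reverse containment requires curves on $X$ dual to each generator of the right-hand cone, and nothing in your argument produces them. Concretely, if $D=\pi^*D'-aF$ is nef, intersecting with a ruling $f\subset F$ gives $a\ge 0$; rewriting $D=\pi^*(D'-aH)+aL$, you must then show $D'-aH\in\nef{Z}$, and there is no formal reason this should hold.

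The paper supplies exactly this missing step via an explicit dual basis of curves (its Lemma~\ref{lem-clean}). One shows that $\nef{Z}$ is simplicial with generators $L_1,\dots,L_\rho$, and that there exist curves $l_i$ on $Z$ with $l_i\cdot L_j=\delta_{ij}$ whose deformations sweep out at least a divisor. Since then $l_i\cdot H=1$ and the swept locus meets the ample curve $C$, some representative of $l_i$ meets $C$ transversally in a single point; its proper transform $b_i\subset X$ satisfies $b_i\cdot L_j=\delta_{ij}$ and $b_i\cdot L=0$. Together with the ruling $f$, the $b_i$ cut out the claimed cone. Establishing the existence of such $l_i$ is done by going through the Mukai classification case by case. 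Your proposal would need to incorporate this construction (or an equivalent source of test curves) to be complete.
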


We next study the conjecture for movable cones. We resolve the conjecture in several interesting cases. It is easy to show (see Corollary \ref{remark}) that the conjecture is true when $\rho(Z)=1$, so we concentrate on the higher Picard rank case. We show the following.

\begin{theorem*}[\S \ref{sect-finite}, \ref{sect-infinite}]
Let $Z$ be one of the following Fano manifolds:
\begin{enumerate}
 \item $\PP^1 \times \PP^3$
\item the blowup of a cone over a quadric threefold at the vertex
\item  $\PP^1 \times V$, where $V$ is a del Pezzo manifold of Picard rank one 
\item $\PP^1 \times \PP^1 \times \PP^1 \times \PP^1$
\item the double cover of $\PP^2 \times \PP^2$ branched over a divisor of type $(2,2)$
\item $\PP^3 \times \PP^3$, or a bi-linear section of it, namely the flag variety $F(1,3;4)$  or the intersection of two divisors of type $(1,1)$.
\end{enumerate}
Let $-K_Z=(n-2)H$ and $V \subset |H|$ a linear system that is \textbf{general} in cases 1--4 or \textbf{very general} in cases 5 and 6. Let $X$ be the blowup of $Z$ along the base locus of $V$. Then the Morrison--Kawamata Cone Conjecture on the effective movable cone holds for $X$. In the first three cases the movable cone has finitely many faces; in the last two it has infinitely many faces.
\end{theorem*}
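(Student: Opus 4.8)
The plan is to verify the Morrison--Kawamata conjecture for the movable cone in each of the six cases by analyzing the pseudo-automorphism group acting on $\Mov{X}$. My overall strategy rests on the fact that $X$ carries a $K3$ fibration structure coming from $|-K_X|$, so the birational geometry of $X$ is governed by the movable cone of a general $K3$ fiber $S$ together with the way the N\'eron--Severi group of $X$ restricts to $S$. First I would compute the N\'eron--Severi lattice $N^1(X)$ explicitly: since $X$ is the blowup of $Z$ along the base locus $C$ of the linear system $V$, we have $N^1(X) \cong N^1(Z) \oplus \ZZ E$, where $E$ is the exceptional divisor, and I would identify the classes of the fibration and the pullbacks of the generators of $\Pic(Z)$. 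The restriction map $N^1(X) \to N^1(S)$ is the crucial object: by a Noether--Lefschetz type argument, for $V$ \emph{very general} in cases 5 and 6, the image of this restriction should be a primitive sublattice of $\Pic(S)$ of rank equal to $\rho(X)$, so that the movable cone of $X$ embeds faithfully into that of $S$.

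Next I would carry out the division into the finite and infinite cases. In cases 1--4, the ambient lattice has signature and rank such that the relevant hyperbolic sublattice of $\Pic(S)$ admits a \emph{finite} Weyl group (equivalently, the associated quadratic form has no, or only finitely many, $(-2)$-classes producing distinct chambers), so the movable cone of $X$ is rational polyhedral and the conjecture holds with a finite fundamental domain; here I expect to use the classification in \S\ref{sect-fano} to pin down the lattice for each $Z$ and then to match the wall structure against the effective and movable cones computed directly on $X$. In cases 5 and 6, by contrast, the restricted lattice is a rank-three (or higher) hyperbolic lattice whose group of isometries preserving the movable cone is infinite, producing infinitely many faces; the task there is to show that the pseudo-automorphisms of $X$ surject onto (a finite-index subgroup of) this arithmetic group, so that the Morrison--Kawamata fundamental domain is nonetheless rational polyhedral.

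The key mechanism for producing these pseudo-automorphisms is the theory of flops: each $(-2)$-curve on the general $K3$ fiber $S$ that is contracted by a wall of $\Mov{S}$ should sweep out a divisor in $X$ whose flop gives a pseudo-automorphism of $X$, and by a monodromy/parallel-transport argument these fiberwise reflections should glue to a global pseudo-automorphism. Thus I would (i) identify the walls of $\Mov{S}$ via Kov\'acs's description of the ample cone of a K3 surface in terms of $(-2)$-curves, (ii) lift each relevant reflection to a birational self-map of $X$ over $\PP^{n-2}$, and (iii) invoke the Morrison--Kawamata statement for K3 surfaces (Totaro \cite{Totaro2008}, Sterk) fiberwise to conclude that the image of $\PsAut(X)$ in $\mathrm{O}(N^1(X))$ has a rational polyhedral fundamental domain on $\Mov{X}$.

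The hard part will be step (ii)--(iii): showing that the fiberwise reflections actually extend to honest pseudo-automorphisms of the \emph{total space} $X$, rather than merely birational maps on individual fibers. This requires controlling the monodromy of the family of $K3$ surfaces and verifying that the relevant $(-2)$-classes remain algebraic and of fixed type along the base $\PP^{n-2}$ --- exactly where the \emph{very general} hypothesis in cases 5 and 6 enters, via a Noether--Lefschetz argument guaranteeing that $N^1(X)$ maps \emph{isomorphically} onto the Picard lattice of the generic fiber with no extra classes appearing. Establishing that this image is precisely the invariant lattice of the global monodromy, and hence that flops of $X$ realize the full arithmetic reflection group, is the central obstacle; the polyhedrality of the fundamental domain then follows from the general theory of arithmetic groups acting on the cone of a hyperbolic lattice.
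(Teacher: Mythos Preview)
Your proposal has a fundamental conceptual gap in the mechanism for producing pseudo-automorphisms. You write that ``each $(-2)$-curve on the general $K3$ fiber $S$ \ldots should sweep out a divisor in $X$ whose flop gives a pseudo-automorphism,'' and that you will lift ``fiberwise reflections'' via monodromy. But in every case treated in the paper, the Picard lattice of the generic $K3$ fiber contains \emph{no $(-2)$-classes at all} (this is checked by an elementary divisibility argument on the explicit intersection forms). So there are no reflections to lift, and the nef cone of the fiber equals the full positive cone. The pseudo-automorphisms are obtained by a completely different route: one produces genuine \emph{automorphisms} of the generic fiber $X_\eta$---via the Global Torelli theorem (cases 5 and 6) or via translations in the Mordell--Weil group of the elliptic fibrations on the $K3$ (case 4)---and then observes that any automorphism of $X_\eta$ extends to a birational self-map of $X$ over $\PP^{n-2}$, which is a pseudo-automorphism because $K_X$ is relatively trivial (relative minimality, \cite[Theorem 3.52]{KollarMori}). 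No monodromy or parallel transport is involved.

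You have also misclassified case 4: $(\PP^1)^4$ belongs to the \emph{infinite} family---the generic $K3$ fiber has Picard rank $4$, its positive cone has infinitely many rational boundary rays (elliptic pencils), and $\Aut(X_\eta)$ is infinite and highly non-abelian. Conversely, for cases 1--3 the paper does not use any Weyl-group or automorphism argument; instead it computes $\Mov{X}$ directly as an explicit rational polyhedral cone by (a) using elliptic curve classes on the $K3$ fibers to bound movable divisors from above, and (b) exhibiting specific divisor classes such as $4L_2-F$ or $3L-3E-F$ as movable by analyzing the projection of the base curve and the linear systems cutting it out. Finally, even granting the relative cone conjecture for $X \to \PP^{n-2}$, passing to the absolute cone $\Mov{X}$ requires an additional ``lifting'' step (covering a fundamental domain in $N^1(X/\PP^{n-2})$ by images of nef cones of finitely many explicit SQMs of $X$), and this step---together with showing those SQMs have rational polyhedral nef cones and admit no flipping contractions---is where most of the geometric work lies in the infinite cases.
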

~\newline
In  \cite{CPS}, we proved the Morrison--Kawamata conjecture for certain blowups of Fano manifolds of index $n-1$. These manifolds have elliptic fibrations and the group structure on the generic fiber provides a source of birational automorphisms that suffice to verify the conjecture. In the present case, we use automorphisms of the $K3$ fibers, guaranteed to exist by the Global Torelli Theorem, to find birational automorphisms of our varieties. However, the implementation is much more complicated for two reasons. First, the base locus of the linear series $V$ is a curve rather than finitely many points, which makes understanding effective and movable divisor classes much more difficult. Second, the fibers are $K3$ surfaces, which unlike curves can have complicated cones of divisors and automorphism groups.

In spite of these difficulties, we are able to verify the conjecture in the stated cases. Interestingly, the different cases exhibit radically different behavior.  In some cases, the $K3$ fibers have finite automorphism group, and the birational geometry of the total space is simple.  In others, the $K3$ surface has infinite, even highly non-abelian automorphism group, and the birational geometry of the total space is correspondingly complicated.

\subsection*{Organization of the paper}  In \S \ref{sect-intro} we give a precise statement of the Morrison--Kawamata conjecture. In \S \ref{sect-fano} we recall Mukai's classification of Fano manifolds of index $n-2$, and analyze the fundamental linear system on such manifolds. This analysis is used in \S \ref{sect-nef} to give a  proof of the conjecture for the nef cone in all cases. In \S \ref{sect-generalities}, we collect some general results needed to study the conjecture for the movable cone. In particular, we discuss Looijenga's Theorem \ref{thm-looijenga} on fundamental domains, and the ``Lifting" Theorem \ref{theorem-lifting} allowing us to pass from the nef cone of the general fiber to the movable cone of the total space. In \S \ref{sect-finite}, we study the conjecture for $\PP^1 \times \PP^3$, the blowup of the cone over a quadric threefold, and $\PP^1 \times V$, where $V$ is a del Pezzo manifold of Picard rank one.  In \S \ref{sect-infinite}, we study the branched double cover of $\PP^2 \times \PP^2$, $\PP^3 \times \PP^3$ and its bi-linear sections, and $\PP^1 \times \PP^1 \times \PP^1 \times \PP^1$.

\subsection*{Acknowledgments} We thank Lawrence Ein and Burt Totaro for fruitful discussion.  We are especially grateful to Chris Brav, who generously shared his Sage code to compute fundamental domains. 

Part of this work was done while the second author was a participant in the Hausdorff Trimester Program ``Birational and hyperk\"ahler geometry". He is extremely grateful to the Hausdorff Institute for the invitation and hospitality.

\section{The Morrison--Kawamata conjecture} \label{sect-intro}
In this section, we recall the precise statement of the Morrison--Kawamata Cone Conjecture.

A {\em rational polyhedral} cone in a real vector space with a $\bbQ$-structure is a closed, convex cone generated by finitely many rational extremal rays. A finite rational polyhedral cone $\Pi$ is a {\em fundamental
  domain} for the action of a group $G$ on a cone $C$ if $C = G \cdot \Pi$ and the interiors of $\Pi$ and $g \Pi$ are disjoint for $1 \not= g \in G$.

Let $X$ be a projective variety. A line bundle $L$ on $X$ is {\em nef} if $L \cdot C \geq 0$ for every proper curve $C$ on $X$.  It is  {\em movable} if its base locus has codimension at least 2 in $X$. 
Let $N^1(X)$ denote the N\'{e}ron-Severi space of  $X$. The classes of nef line bundles generate a closed convex cone in $N^1(X)$ called the {\em nef cone} $\nef{X}$. Similarly, we denote by $\mov{X}$ the smallest {\em closed} cone containing the classes of all movable line bundles. Let $\Nef{X}$ and $\Mov{X}$ denote the {\em effective nef cone} and the {\em effective movable cone},  which are the subcones of  the nef and movable cones generated by effective Cartier divisors, respectively. 

A {\em pseudo-isomorphism} from $X_1$ to $X_2$ is a birational map which is an isomorphism in codimension one. A {\em small $\bbQ$-factorial modification} (SQM) of  $X$ is a pseudo-isomorphism from $X$ to another $\bbQ$-factorial projective variety $X'$. If $\alpha: X \dashrightarrow X'$ is an SQM, then pullback of divisors gives an identification $N^1(X') \iso N^1(X)$ under which $\Nef{X'}$ is identified with a subcone of $\Mov{X}$. We abuse notation and also denote this subcone by $\Nef{X'}$.

Let $\Delta$ be an $\bbR$-divisor on a normal $\bbQ$-factorial variety $X$. Then the pair 
 $(X,\Delta)$ is {\it klt} if, for all resolutions $\pi:
\tilde{X} \arrow X$ with a simple normal crossing $\RR$-divisor
$\tilde{\Delta}$ such that $K_{\tilde{X}}+\tilde{\Delta} =
\pi^*(K_X+\Delta)$, the coefficients of $\tilde{\Delta}$ are less than
1. In particular, if $X$ is smooth and $D$ is a smooth divisor on $X$,
then $(X,rD)$ is klt for any $r<1$. The pair $(X,\Delta)$ is a
{\it klt Calabi--Yau pair} if $(X,\Delta)$ is a $\QQ$-factorial klt
pair with $\Delta$ effective such that $K_X+\Delta$ is numerically
trivial. 

Let $\Aut(X,\Delta)$, respectively
 $\PsAut(X,\Delta)$, denote the group of automorphisms, respectively pseudo-automorphisms,  of $X$
which preserve a divisor $\Delta$. These groups act on $N^1(X)$ preserving the cones $\Nef{X}$ and $\Mov{X}$, respectively. Moreover, the action of $\PsAut(X,\Delta)$ permutes the effective nef cones $\Nef{X'}$ of SQMs inside the movable cone \cite[Lemma 1.5]{Kawamata1997}. 
Denote the images of these groups in $\operatorname{GL}(N^1(X))$  
 by $\Aut^*(X,\Delta)$ and $\PsAut^*(X,\Delta)$.   With these conventions, the
Morrison--Kawamata cone conjecture states the following.

\begin{conjecture}[Morrison--Kawamata] \label{conj-coneconjecture}
Let $(X, \Delta)$ be a klt Calabi--Yau pair. Then:

\begin{enumerate}
\item The number of $\Aut(X,\Delta)$-equivalence classes of faces of
 $\Nef{X}$ corresponding to birational
contractions or fiber space structures is finite. Moreover, there
exists a  rational polyhedral cone $\Pi$ which is a fundamental
domain for the action of $\Aut^*(X,\Delta)$ on $\Nef{X}$.

\item The number of $\PsAut(X,\Delta)$-equivalence classes of
nef cones $\Nef{X'}$ in the cone $\Mov{X}$
corresponding to SQMs $\alpha: X' \dashrightarrow X$ is finite. Moreover, there
exists a  rational polyhedral cone $\Pi'$ which is a fundamental
domain for the action of $\PsAut^*(X,\Delta)$ on $\Mov{X}$.
\end{enumerate}
\end{conjecture}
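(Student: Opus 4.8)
\emph{The conjecture is open in general, so I read the statement as the assertion to be verified for the specific pairs $(X,\Delta)$ constructed in the introduction, and I organize the plan around its two parts.} For Part (1) the plan is to prove the stronger Theorem~\ref{thm-nef}, namely that $\Nef{X}$ is already rational polyhedral and contained in the effective cone; then one takes the fundamental domain to be the whole cone, the group $\Aut^*(X,\Delta)$ plays no role, and finiteness of the face-classes is automatic. I would start from the blowup description $N^1(X)=\pi^*N^1(Z)\oplus\ZZ E$, where $\pi\colon X=\Bl_C Z\to Z$ and $E$ is the exceptional divisor. Since $Z$ is Fano its nef cone is rational polyhedral, and the two natural contractions of $X$ — the blowdown $\pi$ and the $K3$-fibration $f\colon X\to\PP^{n-2}$ induced by $|-K_X|$ — cut out finitely many supporting half-spaces. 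The remaining work is to verify that there are no further extremal rays, which amounts to classifying the extremal contractions of $X$; once this is done each generator is visibly effective, giving the containment.

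For Part (2), the substantive part, the plan is to transfer the problem to a general $K3$ fiber $S$ of $f$ by means of the Lifting Theorem~\ref{theorem-lifting}. I would set up a dictionary with three ingredients: (a) the restriction map $N^1(X)\to\Pic(S)$, whose image is, for a \emph{very general} fiber, the full Picard lattice by a Noether--Lefschetz argument — this is exactly where the very general hypothesis in cases~5 and~6 enters; (b) a correspondence identifying $\Mov{X}$ with the part of the positive cone $\mathcal{C}^+\subset\Pic(S)\otimes\RR$ swept out by the nef chambers of the birational models of $S$, under which the nef cones $\Nef{X'}$ of the SQMs $X'\dashrightarrow X$ correspond to flops governed by the $(-2)$-classes; and (c) a lifting statement showing that an automorphism of $S$ preserving the induced polarization data spreads out fiberwise and extends to a pseudo-automorphism of $X$ fixing $\Delta$, so that $\PsAut^*(X,\Delta)$ mirrors $\Aut(S)$, together with the Weyl group of $(-2)$-reflections, acting on $\mathcal{C}^+$.

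With this dictionary the conjecture for $\Mov{X}$ reduces to the known picture for the $K3$ surface $S$: by the Global Torelli Theorem and its consequences (the theorem of Sterk, obtained via Looijenga's Theorem~\ref{thm-looijenga}), $\Aut(S)$ acts on $\Nef{S}$ with a rational polyhedral fundamental domain and only finitely many orbits of chambers. I would compute $\Pic(S)$ case by case from the geometry of $Z$ and the fibration, decide whether $\Aut(S)$ is finite (cases~1--3, giving a polyhedral $\Mov{X}$ with finitely many faces) or infinite (the remaining cases, giving infinitely many faces), and then pull the fundamental domain and the finiteness statement back to $X$ through the dictionary.

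\emph{Main obstacle.} The hard part is ingredients (b)--(c): controlling effective and movable classes on $X$ and lifting fiberwise automorphisms to global pseudo-automorphisms. Because the base locus $C$ is a curve rather than a finite set of points, effective divisors on $X$ are delicate, and I expect the principal difficulty to be proving that the cone predicted by the fiber computation is \emph{exactly} $\Mov{X}$ — that no extra effective rays arise from the base directions — and that each automorphism of the very general fiber genuinely extends to a pseudo-automorphism fixing $\Delta$, rather than merely a birational self-map of a single fiber. Pinning down the precise Picard lattice of the very general fiber and matching its infinite, highly non-abelian automorphism group to lifted pseudo-automorphisms of $X$ in cases~5 and~6 is where most of the effort will concentrate.
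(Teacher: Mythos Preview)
Your broad architecture matches the paper's: Part~(1) via Theorem~\ref{thm-nef}, Part~(2) by passing to the generic $K3$ fiber, invoking Torelli, and lifting. But ingredient~(b) of your dictionary contains a genuine misconception that would block the argument. You say the nef cones $\Nef{X'}$ of SQMs ``correspond to flops governed by the $(-2)$-classes'' on $S$, with the Weyl group of $(-2)$-reflections appearing alongside $\Aut(S)$. In fact, in nearly every case the paper treats, the Picard lattice of the fiber has \emph{no} $(-2)$-classes whatsoever (the self-intersection form is divisible by~$4$ for $(\PP^1)^4$, by $2d$ for $\PP^1\times V$, etc.), so $\Nef{S}$ is the full positive cone and there are no Weyl chambers on the fiber side. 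The SQMs of $X$ arise instead from flopping loci of curves on the \emph{total space}---proper transforms of lines meeting the base curve $C$, with classes like $l_i-f$---and these flops are invisible on the fiber. What restriction actually identifies (Proposition~\ref{prop-restriction}) is the \emph{relative} cone $\Mov{X/\PP^{n-2}}$ with $\Nef{X_\eta}$; the absolute $\Mov{X}$ sits over this as a cone of one higher dimension.

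Consequently the passage from relative to absolute is not a formality but the substance of Theorem~\ref{theorem-lifting}, whose hypotheses demand an explicit finite collection of SQMs $X_i$ with rational polyhedral nef cones whose images under $q:N^1(X)\to N^1(X/\PP^{n-2})$ cover a fundamental domain for $\Aut(X_\eta)$. The paper carries this out by hand in each infinite case: constructing the flops, computing $\Nef{X_i}$ via explicit effective curve classes (e.g.\ $3l_i+l_j-4f$ from $4$-secant lines), and checking basepoint-freeness of the new extremal divisors (Proposition~\ref{nefconesofflops}). For the double cover of $\PP^2\times\PP^2$ an additional ingredient is needed, namely that no flipping contractions occur on any SQM (Theorem~\ref{theorem-noflips}, via Kawamata's length bound). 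Your plan does not yet contain these steps. Finally, for the finite cases (1)--(3) the paper bypasses the lifting machinery entirely and computes $\Mov{X}$ directly: it exhibits four generators, proves the nontrivial one (a class like $4L_2-F$ or $2L_2-F$) is movable by analyzing the projection of $C$, and bounds from above by intersecting with elliptic curves on a general fiber---finiteness of $\Aut(S)$ by itself would not deliver this.
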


The conjecture has been proved for Calabi--Yau surfaces by
Looijenga--Sterk and Namikawa \cite{Sterk1985,Namikawa1985}, for klt
Calabi--Yau pairs of dimension 2 by Totaro \cite{Totaro2008b}, for
Calabi--Yau fiber spaces of dimension 3 over a positive-dimensional
base by Kawamata \cite{Kawamata1997}, and for abelian varieties by the
second author \cite{Prendergast-Smith2010}. For Calabi--Yau 3-folds
there are significant results by Oguiso--Peternell \cite{Oguiso2001},
Szendr\"oi \cite{Szendroi1999}, Uehara \cite{Uehara2004}, and Wilson
\cite{Wilson1992}. The conjecture has been verified for $(2, 2, \dots, 2)$ divisors in $(\PP^1)^n$ by Cantat and Oguiso \cite{cantatoguiso} and for certain blowups of Fano manifolds of index $n-1$ in \cite{CPS}.  However, the full conjecture remains open.  

\section{Fano manifolds of index $n-2$} \label{sect-fano}
Let $n \geq 4$ be an integer.  In this section, we recall the classification of $n$-dimensional Fano manifolds of index $n-2$ for the reader's convenience. We refer the reader to \cite{AC}, \cite{Mukai}, \cite{IP} and \cite{Wisniewski} for more details.

\begin{definition}
A projective manifold $Z$ is called \emph{Fano} if its anticanonical line bundle $\OO_Z(-K_Z)$ is ample. The \emph{index} $i_Z$ of a Fano manifold $Z$ is the largest positive integer $m$ for which there exists a line bundle $A$ on $Z$ such that $\OO_Z(-K_Z) = A^{\otimes m}$. 
\end{definition}
The index of an $n$-dimensional Fano manifold $Z$ is at most $n+1$, with equality if and only if $Z = \PP^n$. Fano manifolds of index $n$ are quadric hypersurfaces \cite{KO}. Fano manifolds of index $n-1$, known as {\em del Pezzo manifolds}, were classified by Fujita  \cite{Fujita}. 


\subsection*{Mukai manifolds} Fano manifolds of dimension $n$ and index $n-2$ are called {\em Mukai manifolds}.  In \cite{Wisniewski}, 
Wi\'{s}niewski  classified Fano $n$-folds $Z$ of index at least $\frac{n+1}{2}$ and $\rho(Z)\geq 2$. His theorem implies that Fano $n$-folds with   $n \geq 5$, $\rho(Z)\geq 2$ and $i_Z=n-2$ are 
$$\PP^3 \times \PP^3, \ \ \PP^2 \times Q^3, \ \ F(1,3; 4), \ \ \Bl_{l}\PP^5,$$
where $Q^3$ is a  quadric threefold, $F(1,3;4)$ is the partial flag variety parameterizing partial flags $F_1 \subset F_3$ in a four-dimensional vector space and $\Bl_l(\PP^5)$ is the blowup of $\PP^5$ along a line. 

The Fano $n$-folds with $\rho(Z)=1$ and $i_z=n-2$ are also classified \cite{Mukai}, \cite{IP}. We will not need the detailed classification since we will see (Remark \ref{remark}) that the Morrison--Kawamata conjecture is easy to verify in this case. For the reader's convenience, we note that the Mukai $n$-folds  with $n\geq 4$ and $\rho(Z)=1$  are the following.
\begin{enumerate}
\item[(M1)] A quadric section of the cone over $G(2,5)$ and its linear sections,
\item[(M2)] Certain homogeneous varieties and their linear sections: 
\begin{enumerate}
\item The spinor variety $OG_{+}(5,10)$, 
\item The Grassmannian $G(2,6)$, 
\item The symplectic Grassmannian $SG(3,6)$, and 
\item The five-dimensional $G_2$ homogeneous variety $G_2/P_2$.
\end{enumerate}
\item[(M3)] Certain complete intersections in projective space and weighted projective spaces:  
\begin{enumerate}
\item Quartic hypersurfaces $X_4 \subset \PP^{n+1}$, 
\item $(2,3)$ complete intersections $X_{2,3} \subset \PP^{n+2}$, 
\item $(2,2,2)$ complete intersections $X_{2,2,2} \subset \PP^{n+3}$, 
\item A degree $6$ hypersurface in the weighted projective space $\PP(3,1, \dots, 1)$, and 
\item A degree $(2,2)$ complete intersection in the weighted projective space $\PP(2,1, \dots, 1)$. 
\end{enumerate}
\end{enumerate}

\subsection*{Mukai fourfolds} Finally, we list the Fano fourfolds with $i_Z= 2$ and $\rho(Z) \geq 2$ \cite{IP}.
\begin{enumerate}
\item[(F1)] $\PP^1 \times V$, where $V$ is a del Pezzo threefold,
\item[(F2)] Certain complete intersections in products of homogeneous varieties: 
\begin{enumerate}
\item A divisor of type $(1,2)$ in $\PP^2 \times \PP^3$, 
\item A divisor of type $(1,1)$ in $\PP^2 \times Q^3$, and 
\item An intersection of two divisors of type $(1,1)$ in $\PP^3 \times \PP^3$,
\end{enumerate}
\item[(F3)] The blowups $\Bl_l Q^4$, $\Bl_q Q^4$ of a quadric fourfold along a line $l$ and a conic $q$ whose plane is not contained in $Q^4$,
\item[(F4)] A double cover of $\PP^2 \times \PP^2$ branched along a divisor of type $(2,2)$,
\item[(F5)] The blowup of a cone over a quadric threefold at the vertex, 
\item[(F6)] $\PP (\mathcal{E})$, where $\mathcal{E} = \OO_{\PP^3}(-1) \oplus \OO_{\PP^3}(1)$ or $\mathcal{E}$ is the null-correlation bundle on $\PP^3$,
\item[(F7)] $\PP^1 \times \PP^3$.
\end{enumerate}

\subsection*{The linear system $|H|$} Let $Z$ be a Mukai manifold of dimension $n \geq 4$. In each case, we can write $-K_Z = (n-2)H$ for an ample divisor $H$. We now analyze the linear system $|H|$. 

\subsection*{Homogeneous $Z$} Let $Z$ be a homogeneous Mukai manifold or a linear section of one. If $\rho(Z)=1$ (case M2), then $H$ is the ample generator of $\Pic(Z)$ and the class of the Schubert divisor.  The complete linear system $|H|$ defines the Pl\"{u}cker embedding of $Z$. If  $\rho(Z)>1$, $$\mbox{i.e.,} \ \ \PP^3\times \PP^3, \ \ \PP^2 \times Q^3,\ \  F(1,3;4), \ \ \PP^1 \times \PP^1 \times \PP^1 \times \PP^1, \ \ \PP^1 \times F(1,2,3), \ \ \PP^1 \times \PP^3,$$  then $H$ is the sum of the classes of the Schubert divisors with coefficient one and   defines the Segre embedding on the product of the Pl\"{u}cker embeddings. In particular, when $Z$ is homogeneous, $H$ is very ample. 

Similarly, when $Z$ is a quadric section of the cone over $G(2,5)$ (case M1), $H$ is the class of a hyperplane section. In particular, $H$ is very ample.

\subsection*{Complete intersections} Let $Z$ be a Mukai manifold which is a complete intersection in a homogeneous variety $V$. When $V$ is projective space (cases M3 a,b,c), then $H$ is the restriction of the hyperplane class to $Z$. 
When $Z$ is a divisor of type $(1,2)$ in $\PP^2 \times \PP^3$,  a divisor of type $(1,1)$ in $\PP^2 \times Q^3$ or a complete intersection of two divisors of type $(1,1)$ in $\PP^3 \times \PP^3$ (cases F2), then $H$ is the class of  $\OO_Z(1,1)$. Hence, $H$ is very ample.

Similarly, when $Z = \PP^1 \times V$ with $V$ a Fano threefold of index 2 and degree at least 3, then $Z$ is the product of $\PP^1$ with a complete intersection in projective space. In this case, $H$ is the sum of the pullbacks of the hyperplane classes and is very ample.

\subsection*{Blowups} Let $\phi: Z = \Bl_l(\PP^5)\rightarrow \PP^5$ be the blowup of $\PP^5$ along a line $l$.  Let $L$ denote the class of  $\phi^* \OO_{\PP^5}(1)$ and let $E$ denote the class of the exceptional divisor over  $l$. Then $-K_Z = 6L - 3 E$ and $H$ is $2L - E$. Similarly, let $Z$ be one of $\Bl_l(Q_4)$, $\Bl_q(Q_4)$ as in case F3 or the blowup of the cone over $Q^3$ along the vertex (case F5). Let $L$ denote the pullback of the hyperplane class on the quadric and let $E$ denote the exceptional divisor. Then $-K_Z = 4L - 2E$ and $H = 2L - E$.

In all four cases, $H$ is very ample. It is clear that $H$ separates points and tangents away from the exceptional divisor. On the exceptional divisor $E$, we have the exact sequence 
$$0 \rightarrow \OO_Z(L-E) \rightarrow \OO_Z(2L-E) \rightarrow \OO_E(2L-E) \rightarrow 0.$$ Using that $H^1(Z, \OO_Z(L-E))=0$ by the Kodaira vanishing theorem and that $\OO_E(2L-E)$ is very ample on $E$, one  easily concludes that $H$ is very ample on $Z$.

\subsection*{Projective bundles}
When  $Z = \PP(\OO_{\PP^3}(-1) \oplus \OO_{\PP^3}(1))$,  $Z$ is the blowup of the vertex on the cone over the two-uple Veronese embedding of $\PP^3$. Then $-K_Z = 4L - 2E$ and $H=2L-E$, where $L$ is the pullback of the hyperplane class on $\PP^3$ and $E$ is the exceptional divisor over the vertex. As in the previous case, $H$ is very ample.

Let  $Z= \PP (\mathcal{E})$, where $\mathcal{E}$ is the null-correlation bundle on $\PP^3$ with $c_1(\mathcal{E}) =0$ and $c_2(\mathcal{E}) =1$. Recall that $\mathcal{E}$ may be defined via the sequence \cite{SW}
$$0 \rightarrow \OO_{\PP^3} \rightarrow \Omega \PP^3 (2) \rightarrow \mathcal{E}(1) \rightarrow 0.$$
Then, $-K_Z = 4L + 2 \xi_{\mathcal{E}}$, where $L$ is the pullback of the hyperplane class from $\PP^3$ and $\xi$ is the class of $\OO_{\PP\mathcal{E}}(1)$. The variety $Z$ can also be realized as the inverse image of a hyperplane section under the projection of $F(1,2;4)$ to the Grassmannian $G(2,4)$ (see \cite{SW}). In this description, the linear system $H$ is the restriction of the sum of the Schubert divisors on $F(1,2;4)$, hence is very ample.

\subsection*{Branched covers} When $Z$ is the double cover of $\PP^n$ branched along a sextic hypersurface, or a double cover of a quadric branched along the intersection with another quadric (cases M3 d,e), $H$ is the pullback of the hyperplane class.  Similarly, when $Z$ is the double cover of $\PP^2 \times \PP^2$ branched along a divisor of type $(2,2)$ (case F4), then $H$ is the pullback of a divisor of type $(1,1)$. In these cases, $H$ is ample and basepoint-free but not very ample. It defines the two-to-one cover. 

When $Z$ is $\PP^1 \times V$, where $V$ is a del Pezzo threefold of degree $2$ or $1$, $H$ is similarly ample but not very ample and defines the branched cover.

\subsection*{The basic analysis of our examples}
Our basic examples will be the blowups of Mukai manifolds along a general $(n-2)$-dimensional linear subsystem of $|H|$. We first observe the following. 
Let $Z$ be a Mukai manifold of dimension $n\geq 4$ and write $-K_Z = (n-2) H$. We have seen that $|H|$ is a basepoint-free linear system. In fact, except in the case of branched covers, $|H|$ is very ample. Let $V \subset |H|$ be an $(n-2)$-dimensional general linear subsystem.  

\begin{proposition}
Let $Z$ be a Fano manifold of dimension $n\geq 4$ and index $n-2$. Let $-K_Z=(n-2)H$. Then a general $(n-2)$-dimensional linear subsystem $V \subset |H|$ has base locus $Bs(V)$ a smooth curve in $Z$.  
\end{proposition}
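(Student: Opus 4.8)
The plan is to realise the base locus as a complete intersection of general members of $|H|$ and then to apply Bertini's theorem iteratively. A general $(n-2)$-dimensional linear subsystem $V \subset |H|$ is spanned by $n-1$ general divisors $D_1, \dots, D_{n-1} \in |H|$, and set-theoretically $Bs(V) = D_1 \cap \cdots \cap D_{n-1}$; scheme-theoretically $Bs(V)$ is the intersection of the zero schemes of a spanning set, which for a basis $s_1,\dots,s_{n-1}$ has ideal $(s_1,\dots,s_{n-1})$, i.e. exactly the scheme-theoretic intersection $D_1 \cap \cdots \cap D_{n-1}$. We have already observed that $|H|$ is basepoint-free, and since $H$ is ample the morphism $\phi\colon Z \to \PP^N$ it defines is finite onto its image; thus $Bs(V) = \phi^{-1}(\Lambda)$ for a general linear subspace $\Lambda \subset \PP^N$ of codimension $n-1$, which already indicates the expected dimension $n-(n-1)=1$.

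First I would pin down the dimension, using ampleness of $H$. For any irreducible subvariety $W \subseteq Z$ of positive dimension, $H|_W$ is ample and in particular nontrivial, so no general member of $|H|$ can be disjoint from $W$ (disjointness would force $H|_W \iso \OO_W$); and since the base locus of $|H|$ is empty, the members containing $W$ form a proper linear subsystem, so the general member does not contain $W$ either. Hence a general $D \in |H|$ meets $W$ in a subvariety of pure dimension $\dim W - 1$. Applying this successively with $W = Z,\, D_1,\, D_1 \cap D_2, \dots$ shows that each partial intersection $D_1 \cap \cdots \cap D_k$ is nonempty of pure dimension $n-k$; in particular $Bs(V) = D_1 \cap \cdots \cap D_{n-1}$ is nonempty of pure dimension $1$, consistent with its $H$-degree being $H^n > 0$.

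Next I would establish smoothness by iterated Bertini, using that the ground field has characteristic zero. Since $|H|$ is basepoint-free and $Z$ is smooth, a general $D_1 \in |H|$ is smooth of dimension $n-1$. The restriction of $|H|$ to $D_1$ is again basepoint-free, so Bertini applied on $D_1$ shows that $D_1 \cap D_2$ is smooth of dimension $n-2$ and that $D_2$ meets $D_1$ transversally, for general $D_2$. Iterating $n-1$ times — at each stage the restricted system stays basepoint-free and the intersection stays of the expected dimension by the previous paragraph — yields that $D_1 \cap \cdots \cap D_{n-1}$ is smooth of dimension $1$ and cut out transversally, hence reduced. Therefore $Bs(V)$ is a smooth curve. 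If connectedness is also wanted, it follows from the ampleness of $H$ via a standard connectedness theorem for intersections of ample divisors.

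The one point requiring genuine care is that $|H|$ is \emph{not} very ample in the branched-cover cases (such as (F4), (M3 d,e), and $\PP^1 \times V$ with $V$ of degree $\leq 2$), so one cannot simply describe $Bs(V)$ as a general linear section of an embedded $Z$. This is exactly why the argument is phrased through Bertini for basepoint-free linear systems and through ampleness of $H$, rather than through a projective embedding: both the smoothness and the dimension count use only that $|H|$ is basepoint-free and $H$ is ample, and both properties hold in every case by the analysis of $|H|$ carried out above.
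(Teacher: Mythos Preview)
Your argument is correct and takes a genuinely different route from the paper's proof. The paper splits into two cases: when $|H|$ is very ample it invokes Bertini for linear sections of an embedded variety directly; in the branched-cover cases it instead applies Bertini downstairs to the very ample system $|L|$ on the target, obtaining a smooth curve $B$ there, and then argues that its preimage in $Z$ is smooth because $B$ is transverse to the branch locus.

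Your approach avoids this dichotomy by working intrinsically with the two properties that hold uniformly: $|H|$ basepoint-free (for iterated Bertini on each successive smooth partial intersection) and $H$ ample (to force each intersection to drop dimension by exactly one). This is cleaner and conceptually more robust, and it is exactly the right way to finesse the non-very-ample cases. What the paper's case split buys is a concrete geometric description of $C$ in the branched-cover situation (as a double cover of a specific curve $B$), and it makes irreducibility immediate; in your version, irreducibility comes only via the connectedness remark at the end (smooth and connected implies irreducible), which is needed later in the paper, so you might promote that sentence from an afterthought to part of the argument, citing e.g.\ the connectedness of complete intersections of ample divisors in dimension $\geq 1$.
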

\begin{proof}
Except when $Z$ is a branched cover  (cases M3 d,e, F1 where $V$ is a del Pezzo manifold of degree 1 or 2, F4), the linear system $|H|$ is very ample. Hence, by Bertini's Theorem, the intersection of $n-1$ general members of the linear system $|H|$ is a smooth, connected curve. In particular, it is irreducible. When $Z$ is a branched cover, $H$ is not very ample but the pullback of a very ample $L$ via the branched cover.  By Bertini's Theorem, the intersections of $n-1$ general hyperplane sections in $L$ is a smooth, irreducible curve $B$. The intersection of $n-1$ general members of $|H|$ is the branched cover of  $B$. Since $B$ is transverse to the branch locus of the cover, we conclude that the base locus of $V$ is a smooth, irreducible curve $C$.
\end{proof}

Let  $\pi: X \arrow Z$ be the blowup of $Z$ along the base curve $C$. Then we get a fibration $f: X \arrow \PP^{n-2}$ given by sections of the bundle $\pi^*H-F$. The fibers of $f$ are proper transforms on $X$ of complete intersections of $n-2$ members of $V$, and are isomorphic to these complete intersections. By the adjunction formula (applied on $Z$), the general fiber is a $K3$ surface. We also have some information about the degenerate fibers.
\begin{proposition} \label{prop-irred}
Let $Z$, $H$ and $V$ be as above.  Then the fibration $f: X \arrow \PP^{n-2}$ has irreducible fibers.
\end{proposition}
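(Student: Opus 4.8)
The plan is to realize the fibers explicitly as linear sections of $Z$ and then prove, for general $V$, that none of them is reducible by a dimension count in a Grassmannian. First I would fix the embedding $Z \hookrightarrow \PP^N$ given by $|H|$ (in the branched-cover cases M3 d,e, F1 with $V$ a del Pezzo manifold of degree $1$ or $2$, and F4, I would instead work with the very ample $L$ and reduce the statement to the irreducibility of the corresponding sections of the base together with the connectedness of the double cover, exactly as in the previous Proposition). Writing $V=\PP(W)$ with $W\subset H^0(Z,H)$ of dimension $n-1$, the linear span $\Lambda\subset\PP^N$ of the common hyperplanes cuts $Z$ in $C=Bs(V)$, and $f$ is the resolution of the linear projection $\PP^N\dashrightarrow\PP^{n-2}$ from $\Lambda$. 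The fiber of $f$ is (the proper transform of) $Z\cap M$, where $M\supset\Lambda$ runs over the codimension-$(n-2)$ linear spaces; equivalently it is the base locus $Bs(V')$ of a hyperplane $V'\subset V$. By Bertini the general such section is a smooth irreducible $K3$ surface, and since $f_*\OO_X=\OO_{\PP^{n-2}}$ the Stein factorization shows every fiber is connected; so the content is to exclude reducible fibers.

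The key observation is that reducibility of a fiber depends only on the $(n-2)$-dimensional subspace $V'\subset H^0(Z,H)$ cutting it out, so I would study the locus
\[
\mathcal{B}=\{\,V'\in G(n-2,H^0(Z,H)) : Bs(V')\ \text{is not an irreducible surface}\,\},
\]
which collects the reducible, non-reduced, and excess-dimensional base loci. Forming the incidence variety $I=\{(V,V'):V'\subset V,\ V'\in\mathcal{B}\}$ and projecting to the second factor, the fibers $\{V\supset V'\}$ all have dimension $h^0(Z,H)-n+1$, so $\dim I=\dim\mathcal{B}+h^0(Z,H)-n+1$. A general $V$ then has all fibers irreducible provided the first projection $I\to G(n-1,H^0(Z,H))$ is not dominant, and this holds as soon as
\[
\operatorname{codim}_{G(n-2,H^0(Z,H))}\mathcal{B}\ \geq\ n-1 .
\]
In this way the proposition reduces to a single codimension estimate.

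The main obstacle is precisely this estimate, and it is delicate because the fibers of a fixed $V$ sweep out a very special $\PP^{n-2}$ inside $G(n-2,H^0(Z,H))$ — the pencil of all $M\supset\Lambda$ — rather than a general linear slice, so the genericity of $V$ must genuinely dominate the bad locus. To bound $\dim\mathcal{B}$ I would argue that a point of $\mathcal{B}$ produces a proper subvariety $A\subsetneq Z$ of dimension at least $2$ contained in $Z\cap M$, with $H$-degree strictly smaller than $\deg_H Z=H^n$; the subvarieties of bounded $H$-degree on the fixed manifold $Z$ form finitely many bounded families (controlled by the explicit structure of the low-degree $2$-cycles of each Mukai manifold), while the $M\supset\Lambda$ containing such an $A$ form a family whose codimension grows with the $n-2$ conditions imposed. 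Assembling these bounds should yield $\operatorname{codim}\mathcal{B}\geq n-1$ and hence the result. I expect verifying this bound uniformly — in particular controlling the excess-dimensional and non-reduced sections — to be the hardest step, and the place where the very ampleness of $H$ and the explicit geometry of the individual $Z$ are genuinely used.
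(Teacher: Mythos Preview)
Your strategy is plausible in outline but vastly more complicated than necessary, and the hard step---the codimension estimate $\operatorname{codim}\mathcal{B}\geq n-1$---is not proved, only hoped for. Carrying it out would require a case-by-case analysis of low-degree $2$-cycles on each Mukai manifold, as you yourself anticipate.

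The paper's argument avoids all of this with a single observation you did not use: the base curve $C$ lies on \emph{every} fiber $S$, not just the general one. Indeed, $S=H_1\cap\cdots\cap H_{n-2}$ is cut out by only $n-2$ members of $V$, while $C=Bs(V)$ is cut out by all $n-1$; so $C=S\cap H_0$ for one further $H_0\in V$. Since $H$ is ample on $Z$, its restriction to $S$ is ample, and $C$ is the zero locus of a section of this ample line bundle on $S$. If $S$ were reducible, an ample Cartier divisor would have to meet every component, forcing $C$ to be reducible---contradicting the previous proposition, which already established that $C$ is a smooth irreducible curve.

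Note two consequences of this simpler route. First, it works for \emph{any} $V$ whose base locus is an irreducible curve, not merely a general one; no Grassmannian genericity is needed. Second, it is uniform across all the Mukai manifolds, including the branched-cover cases, with no separate treatment required.
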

\begin{proof}
Any fiber of $f$ is a proper transform of a surface in $Z$ of the form $S=H_1 \cap \cdots \cap H_{n-2}$ where the $H_i$ are divisors in $V$. The fiber is irreducible if and only if $S$ is irreducible. Since  $C=S \cap H_0$ for another divisor $H_0 \in V$ and $H$ is ample, the restriction of $H$ to $S$ is also ample. If $S$ is reducible, $C$ is an ample Cartier divisor on a reducible surface, hence reducible. This contradicts the hypothesis on $C$. 
\end{proof}

\section{The nef cone}\label{sect-nef}

Let $Z$ be a Fano manifold of dimension $n \geq 4$ and index $n-2$. Write $-K_Z = (n-2) H$ and let $V \subset |H|$ be a general linear system of dimension $n-2$ with base locus a smooth curve $C$. Let $X = \Bl_C(Z)$. In this section, we show that the nef cone of $X$ is rational polyhedral and conclude the Morrison--Kawamata Conjecture for $\Nef{X}$.

\begin{theorem}\label{thm-nef}
The effective nef cone $\Nef{X}$ coincides with $\nef{X}$ and is a rational polyhedral cone. In particular, the Morrison--Kawamata Conjecture holds for $\Nef{X}$.
\end{theorem}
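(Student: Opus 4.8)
The plan is to compute the nef cone of $X = \Bl_C(Z)$ explicitly in terms of the two natural divisor classes we have at hand, namely $\pi^*H$ and the exceptional divisor $F$ (together with any classes pulled back from $N^1(Z)$ when $\rho(Z)>1$). The key geometric input is that $X$ carries two morphisms: the blowdown $\pi: X \to Z$ contracting $F$, and the $K3$-fibration $f: X \to \PP^{n-2}$ of Proposition \ref{prop-irred}, defined by the basepoint-free system $|\pi^*H - F|$. Each of these morphisms is extremal, and I expect the two rays they determine to be precisely the two boundary rays of the nef cone in the rank-two situation $\rho(Z)=1$, so that $\nef{X}$ is the closed cone spanned by $\pi^*H$ and $\pi^*H - F$. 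The higher-Picard-rank cases will proceed analogously, with the pulled-back nef cone of $Z$ (which is rational polyhedral since $Z$ is Fano) contributing the remaining extremal rays.

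First I would set up the numerical data. Since $C$ is a smooth curve and $\pi$ is the blowup along it, we have $N^1(X) = \pi^* N^1(Z) \oplus \ZZ F$, and $K_X = \pi^* K_Z + (\text{codim}-1)F = \pi^*K_Z + (n-2)F$. Using $-K_Z = (n-2)H$ this gives $-K_X = (n-2)(\pi^*H - F)$, which confirms that $-K_X$ is a positive multiple of the fibration class $\pi^*H - F$ and is basepoint-free. Next I would verify the two boundary rays are nef: $\pi^*H$ is nef as the pullback of an ample class under a morphism, and $\pi^*H - F$ is nef because it is basepoint-free (it defines $f$). So the cone they span is contained in $\nef{X}$.

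The heart of the argument is the reverse inclusion: showing no nef class lies outside this cone. The plan is to exhibit enough curve classes to cut out the cone. The fibers of $f$ give a curve class (or rather a family sweeping out $X$) on which $\pi^*H - F$ is zero and $\pi^*H$ is positive; testing against a curve contracted by $f$ but not by $\pi$ shows a nef class cannot have negative coefficient beyond the ray $\pi^*H - F$. On the other side, the fibers of the exceptional divisor $F \to C$ (the $\PP^{n-2}$-bundle fibers) are contracted by $\pi$, and a nef class must be nonnegative on them, which pins down the other wall at $\pi^*H$. Concretely I would compute the intersection numbers of $\pi^*H$, $\pi^*H-F$, and $F$ against a line in a fiber of $F \to C$ and against a general fiber curve of $f$, and check the two test curves span the dual cone correctly; this reduces the whole statement to a finite linear-algebra verification in each case of the classification. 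To finish, since $\nef{X}$ is spanned by the semiample classes $\pi^*H$ and $\pi^*H - F$ (each of which is effective, being basepoint-free), every nef class is effective, so $\Nef{X} = \nef{X}$, and rational polyhedrality is immediate from the finite spanning set.

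The main obstacle I anticipate is the higher-Picard-rank cases from the Wiśniewski/Mukai classification, where $N^1(X)$ has rank $\rho(Z)+1 \geq 3$ and one must confirm that blowing up $C$ introduces exactly one new extremal ray (the fibration ray) without destroying any wall of $\pi^*\nef{Z}$ or creating unexpected new contractions. This requires knowing that the curve $C$ meets the various extremal contractions of $Z$ in a sufficiently generic way — precisely the kind of statement that the genericity of $V$ in the hypotheses is designed to guarantee. Verifying that the relevant Mori cone of $X$ is generated by the fiber-of-$F$ class together with the pullbacks of the extremal curves of $Z$ is the step where the classification must be invoked case by case, but in each case it is a bounded computation.
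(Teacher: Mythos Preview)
Your approach is the same as the paper's, and the rank-one case is handled correctly. For $\rho(Z)>1$ there is one imprecision that matters. You say the Mori cone of $X$ should be generated by the fiber-of-$F$ class together with ``pullbacks of the extremal curves of $Z$''. If by this you mean proper transforms of extremal curves $l_i$ chosen \emph{disjoint} from $C$, these do not cut out the cone you want: such a curve $\tilde{l}_i$ has $\tilde{l}_i\cdot(\pi^*H-F)=l_i\cdot H=1$, so together with the $F$-fiber class $f$ it only yields the inequalities $a_0\ge 0$ and $a_0+a_i\ge 0$ in coordinates $D=a_0(\pi^*H-F)+\sum a_iL_i$, which is strictly weaker than $a_i\ge 0$.

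What the paper does (and what your remark about $C$ meeting the extremal loci is reaching for) is to take proper transforms $b_i$ of curves in the class $l_i$ that meet $C$ in \emph{exactly one} point. Then $b_i\cdot(\pi^*H-F)=1-1=0$ and $b_i\cdot L_j=\delta_{ij}$, so $\{b_1,\dots,b_\rho,f\}$ is genuinely dual to $\{L_1,\dots,L_\rho,\pi^*H-F\}$. The existence of such representatives is the key point, and it follows cleanly from two facts: $l_i\cdot H=1$ (so any $l_i$ meets a general member of $|H|$, hence $C$, in at most one point), and deformations of $l_i$ sweep out at least a divisor (which the ample curve $C=H^{n-1}$ must meet). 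The paper isolates this as a lemma and then verifies case by case that each Mukai manifold with $\rho(Z)>1$ admits such a dual basis $\{L_i\}$, $\{l_i\}$; your ``bounded computation'' anticipation is correct, but the computation is of this specific shape.
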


\begin{proof}
If the Fano manifold $Z$ has $\rho(Z) =1$, then $\rho(X) =2$. Let $L$ denote the pullback of the ample generator of $Z$ and let $F$ denote the exceptional divisor lying over $C$. Then $\Nef{X} =\nef{X} = \langle L, L-F\rangle$. The linear system $|L|$ defines the blowdown morphism and the linear system $|L-F|$ defines the fibration induced by $V$. Both are basepoint-free, hence nef and effective. On the other hand, the maps they define have positive dimensional fibers, hence are the two extremal rays of $\nef{X}$. We conclude that $\Nef{X}$ is a rational polyhedral cone.

We may now assume that $\rho(Z) > 1$. We will compute $\Nef{X}$ using the following lemma.

\begin{lemma}\label{lem-clean}
Let $Z$ be a smooth, projective variety of dimension $n$.  Let $N^1(Z)$ have a basis given by basepoint-free divisor classes $L_1, \dots, L_{\rho}$. Assume there exists curves $l_i$, $1\leq i \leq \rho$,  whose deformations cover $Z$ or a divisor in $Z$ and such that  $l_i \cdot L_j =\delta_{i,j}$. Let $V \subset |L_1 + \cdots + L_{\rho}|$ be a linear system of dimension $n-2$ whose base locus is a smooth curve $C$ and let $\pi: X = \Bl_C Z$ be the blowup of $C$ with exceptional divisor $F$.  Then,
\begin{enumerate}
\item $\Nef{Z} = \langle L_1, \dots, L_{\rho} \rangle$,
\item $\Nef{X} = \langle L_1, \dots, L_{\rho}, L_1 + \cdots + L_{\rho} - F \rangle,$
\end{enumerate}
  where we denote the pullback of a divisor to $X$ with the same symbol.
\end{lemma}

\begin{proof}
Since $L_1, \dots, L_{\rho}$ are basepoint-free, they are nef. Hence, the cone they generate is contained in $\Nef{Z}$. Conversely, let $D$ be a nef divisor. Since $L_1, \dots, L_{\rho}$ is a basis of $N^1(X)$, express $D= \sum_{i=1}^{\rho} a_i L_i$. Intersect $D$ with $l_i$ to obtain $l_i \cdot D = a_i \geq 0$. Hence, $\Nef{Z}$ is contained in the cone generated by $L_1, \dots, L_{\rho}$.

The divisor $L_1 + \cdots + L_{\rho}$ is ample on $Z$ since it is contained in the interior of $\Nef{Z}$. Hence, the base curve $C$ intersects every effective divisor positively. Since $l_i \cdot (L_1 + \cdots + L_{\rho}) = 1$ for every $1 \leq i \leq \rho$ and deformations of $l_i$ cover at least a divisor, we conclude that we can find representatives of the curves $l_i$ that  intersect $C$ at one point. For each $1 \leq i \leq \rho$, let $b_i$ be the proper transform of such a curve in $X$. Let $f$ be a line in $F$ lying over a point of $C$. 

Being pullbacks of nef divisors under a birational morphism, $L_i$ are nef on $X$. Similarly, $G := L_1 + \cdots + L_{\rho} - F$ is by definition basepoint-free, hence nef. We conclude that the cone generated by $L_1, \dots, L_{\rho}$ and $G$ is contained in $\Nef{X}$.  Conversely, let $D$ be a nef divisor on $X$. We can express $D =a_0 G +  \sum_{i=1}^{\rho} a_i L_i.$ Intersecting $D$ with $b_i$, we see $b_i \cdot D = a_i \geq 0$. Intersecting $D$ with $f$, we obtain $f \cdot D = a_0 \geq 0$. Hence, $D$ is contained in the cone spanned by $L_1, \dots, L_{\rho}$ and $G$. This concludes the proof.  
\end{proof}

In view of Lemma \ref{lem-clean}, to conclude the proof of Theorem \ref{thm-nef}, it suffices to define basepoint-free divisors  on $Z$ that form a basis of $N^1(Z)$ and dual curves that sweep out at least a divisor in $Z$. We now list the required divisors and curves in each case.

If $\dim(Z) \geq 5$, then define $L_1$, $L_2$, $l_1$ and $l_2$ as follows.

\noindent $\bullet \ Z = \PP^3 \times \PP^3$. Then let $L_i$ be the pullback of the hyperplane class on $\PP^3$ via the two projections and let $l_i$ be lines in the fiber of the projection $\pi_j$ for $j \not= i$ .
\smallskip

\noindent $\bullet \ Z= \PP^2 \times Q^3$. Then let $L_1 = [\pi_1^*\OO_{\PP^2}(1)]$, $L_2 = [\pi_2^* \OO_{Q^3}(1)]$ and let $l_i$ be lines in the fibers of the projections $\pi_j$ for $j \not= i$.
\smallskip

\noindent $\bullet \ Z= F(1,3;4)$. Then let $L_1$ and $L_2$ be the two Schubert divisors and let $l_1$ and $l_2$ be the dual Schubert curves.
\smallskip

\noindent $\bullet \ Z= \Bl_l (\PP^5)$.  Then let $L_1$ be the pullback of the hyperplane class on $\PP^5$  and let $L_2 = L_1 -E$, where $E$ is the exceptional divisor. Let $l_1$ be the proper transform of a line in $\PP^5$ that intersects $l$ and let $l_2$ be a line in $E$.
\smallskip

It is clear that $L_i$ and $l_i$ satisfy the assumptions of Lemma \ref{lem-clean}. We conclude that 
$$\Nef{X} = \langle L_1, L_2, L_1 + L_2 -F \rangle.$$

If $\dim(Z)=4$ and  $\rho(Z)=2$, then define $L_i$ and $l_i$ as follows.

\noindent $\bullet \ Z= \PP^1 \times V$, where $V$ is a del Pezzo threefold with $\rho(V)=1$. Then let $L_1$ and $L_2$ denote the pullback of the ample generators of $\PP^1$ and $V$ via the two projections. Let $l_1$ be a fiber of the second projection. Let $l_2$ be lines in $V$ in the fiber of the first projection.   
\smallskip

\noindent $\bullet \ Z$ is a divisor of type $(1,2)$ in $\PP^2 \times \PP^3$, a divisor of type $(1,1)$ in $\PP^2 \times Q^3$ or an intersection of two divisors of type $(1,1)$ in $\PP^3 \times \PP^3$. Then let $L_i$ be the pullbacks of the hyperplane classes via the two projections. Let $l_i$ be a line in the fiber of the projection $\pi_j$ for $j \not= i$. 
\smallskip

\noindent $\bullet \ Z$ is a blowup of a quadric as in cases F3 or F5. Then let $L_1$ be the pullback of the hyperplane class on the quadric and let $L_2 = L_1 - E$, where $E$ is the exceptional divisor. Let $l_1$ be a line on the quadric intersecting the center of the blowup and let $l_2$ be a line in $E$ lying over a point of the center of the blowup.
\smallskip

\noindent $\bullet \ Z$ is the double cover of $\PP^2 \times \PP^2$ branched along a divisor of type $(2,2)$. Let $L_i$ be the pullbacks of the hyperplane class in $\PP^2$ to $Z$ via the two projections. Let $m_i$ be a line in a fiber of the projection $\pi_j: \PP^2 \times \PP^2$, $j\not= i$. Then if $m_i$ is tangent to the branch divisor, its double cover in $Z$ is reducible. Let $l_i$ be one of the irreducible components lying over $m_i$. Since through every point of $\PP^2$, there is a line tangent to a conic, these curves cover $Z$.
\smallskip

\noindent $\bullet \ Z= \PP(\OO_{\PP^3}(-1) \oplus \OO_{\PP^3}(1))$. Then we may think of $Z$ as the blowup of the vertex of the cone over the two-uple Veronese embedding of $\PP^3$. Let $L_1$ be the pullback of the hyperplane class  and let $L_2$ be $L_1 - E$, where $E$ is the exceptional divisor. Let $l_1$ be the proper transform of a line through the cone point and let $l_2$ be a line in $E$.
\smallskip

\noindent $\bullet \ Z= \PP (\mathcal{E})$, where $\mathcal{E}$ is a null-correlation bundle. Then we may view $Z$ as a divisor in $F(1,2;4)$. Let $L_1$ and $L_2$ be the restriction of the two Schubert divisors to $Z$ and let $l_1$ and $l_2$ be the dual Schubert curves in $Z$.
\smallskip

\noindent $\bullet \ Z= \PP^1 \times \PP^3$. Then let $Li$ denote the pullback of the two hyperplane classes via the two projections and let $l_i$ be lines in the fibers of the projection $\pi_j$ for $i \not= j$. 
\smallskip

By Lemma \ref{lem-clean},  $\Nef{X} = \langle L_1, L_2, L_1 + L_2 -F \rangle.$

If $\rho(Z)=3$,  define $L_1, L_2, L_3$ and $l_1, l_2$ and $l_3$ as follows.

\noindent $\bullet \ Z = \PP^1 \times F(1,2;3)$. Then let $L_1$ be the pullback of the hyperplane class via the first projection and let $L_2, L_3$ be the pullback of the two Schubert divisors via the second projection.  Let $l_1$ be a fiber of the second projection and let $l_2, l_3$ be dual Schubert curves in the fiber of the first projection.
\smallskip

\noindent $\bullet \ Z = \PP^1 \times \Bl_p(\PP^3)$. Then let $L_1$ be the pullback of the hyperplane class on $\PP^1$, let $L_2$ be the pullback of the hyperplane class $\Lambda$ on $\PP^3$ and let $L_3$ be the pullback of the class $\Lambda - E$ from $\Bl_p (\PP^3)$, where $E$ is the exceptional divisor of the blowup. Let $l_1$ be a fiber of the second projection. Let $l_2$ be the proper transform of a line in $\PP^3$ passing through $p$ and let $l_3$ be a line in $E$.
\smallskip

By Lemma \ref{lem-clean},
$\Nef{X} = \langle L_1, L_2, L_3, L_1 + L_2 + L_3 - F \rangle.$

Finally, if $Z= \PP^1 \times \PP^1 \times \PP^1 \times \PP^1$, let $L_i$, $1\leq i \leq 4$, denote the pullbacks of $\OO_{\PP^1}(1)$ via the four projections and let $l_i$ be the inverse image of a point under the projection that contracts the $i$th factor. Lemma \ref{lem-clean} implies that 
$$\Nef{X} = \langle L_1, L_2, L_3, L_4, L_1 + L_2 + L_3 + L_4 - F \rangle.$$ 

In each case, we conclude that $\Nef{X}$ is a rational, polyhedral (in fact, simplicial) cone spanned by basepoint-free divisors. Consequently, the Morrison--Kawamata Conjecture holds for $\Nef{X}$. 
\end{proof}

If $\rho(Z)=1$, then $\Nef{X} = \langle L, L-F \rangle$. Since these bundles define a divisorial contraction and a fiber space structure, respectively, they must also be extremal rays of the movable cone. We conclude the following.
\begin{corollary}\label{remark}
If $\rho(Z)=1$, then $\Nef{X}=\Mov{X}$ and the Morrison--Kawamata conjecture holds for $\Mov{X}$. 
\end{corollary}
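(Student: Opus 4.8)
The plan is to establish the two inclusions $\Nef{X} \subseteq \Mov{X}$ and $\Mov{X} \subseteq \Nef{X}$ separately, using the explicit description $\Nef{X} = \langle L, L-F\rangle$ obtained in the proof of Theorem \ref{thm-nef}, and then to read off the conjecture from the fact that the movable cone turns out to be a single rational polyhedral chamber. Recall that here $\rho(X)=2$, that $L=\pi^*H$ is the class defining the divisorial blowdown $\pi\colon X\to Z$, and that $L-F$ is basepoint-free and defines the $K3$-fibration $f\colon X\to \PP^{n-2}$.

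The inclusion $\Nef{X}\subseteq \Mov{X}$ is immediate, since the generators $L$ and $L-F$ are basepoint-free, hence movable and effective, and so lie in $\Mov{X}$. For the reverse inclusion I would pin down the two extremal rays of $\Mov{X}$ by producing, for each, a covering family of curves orthogonal to it. First, let $\ell$ be a fiber of the projective bundle $F\to C$; these curves sweep out the divisor $F$ and satisfy $L\cdot\ell=0$ and $F\cdot\ell=-1$. Since any movable divisor $D$ has base locus of codimension at least two, a general member of its linear system does not contain $F$ and hence meets the general $\ell$ properly; thus $D\cdot\ell\ge 0$, giving $\Mov{X}\subseteq\{D : D\cdot\ell\ge 0\}$, a closed half-space whose boundary is exactly the ray spanned by $L$. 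Second, let $\phi$ be a curve contained in a general $K3$ fiber of $f$; these curves cover $X$ and satisfy $(L-F)\cdot\phi=0$ while $L\cdot\phi>0$. As effective divisors are nonnegative on a covering family, $\Mov{X}\subseteq\BEff(X)\subseteq\{D : D\cdot\phi\ge 0\}$, whose boundary is the ray spanned by $L-F$. Intersecting the two half-spaces yields $\Mov{X}\subseteq\langle L, L-F\rangle=\Nef{X}$, which completes the equality $\Mov{X}=\Nef{X}$.

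I expect the divisorial ray $L$ to be the main point to get right, since one must rule out the movable cone protruding into the region where $F$ acquires a fixed component; the orthogonal covering family $\ell$ of $F$, together with the codimension-two base locus condition, is precisely what forbids this. (The second ray can alternatively be obtained without a covering family, by noting that $(L-F)^n=0$ while $L^n=H^n>0$, so $L-F$ is nef but not big and hence lies on $\partial\BEff(X)$.)

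Finally, to conclude the Morrison--Kawamata conjecture for $\Mov{X}$: since $\Mov{X}=\Nef{X}$ is a single rational polyhedral cone, it contains just one nef chamber $\Nef{X}$, namely that of the identity SQM, so there are finitely many $\PsAut(X,\Delta)$-classes of such chambers. Any pseudo-automorphism permutes these chambers and therefore preserves the set $\{L, L-F\}$ of extremal rays; but these rays define a divisorial contraction and a fibration respectively, distinguished for instance by $L^n>0=(L-F)^n$, so they cannot be interchanged and each is fixed. Preserving the lattice $N^1(X)$ then forces $\PsAut^*(X,\Delta)$ to act trivially, and we may take $\Pi'=\Mov{X}$ itself as the required rational polyhedral fundamental domain.
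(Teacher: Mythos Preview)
Your argument is correct and follows essentially the same approach as the paper: both rely on the fact that $L$ defines a divisorial contraction and $L-F$ a fiber space structure, so these rays are extremal not just for $\Nef{X}$ but for $\Mov{X}$; you simply make this explicit via the dual covering families $\ell$ and $\phi$, whereas the paper states it in one sentence. One minor terminological slip: since $n\ge 4$, a fiber of $F\to C$ is $\PP^{n-2}$, not a curve; you mean a \emph{line} in such a fiber (your intersection numbers $L\cdot\ell=0$, $F\cdot\ell=-1$ are correct for a line).
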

In fact, the cone conjecture holds for \emph{any} klt Calabi--Yau pair $(X,\Delta)$ of Picard rank 2 with $\Delta \neq 0$. See \cite{PSnote} for the proof.

\section{Generalities on cones and fibrations}\label{sect-generalities}
In this section, we record some general results that simplify the problem of finding fundamental domains. These will be applied in \S \ref{sect-infinite} to prove some infinite cases of the movable cone conjecture.

 In \S \ref{subs-looijenga} we state a theorem of Looijenga that will simplify studying the boundary of our cones. In \S \ref{subs-lifting} we study the relationship between  the movable cones of our varieties and the nef cones of the generic fibers in our $K3$ fibrations. We then give in Theorem \ref{theorem-lifting} a sufficient condition for the movable cone conjecture to hold that, crucially, only involves knowing part of the movable cones ``close to" the nef cone of $X$. This condition will be the main tool we use to verify the conjecture in the next section. 
\subsection{Looijenga's theorem} \label{subs-looijenga}
In our examples, $\Mov{X}$ is typically not polyhedral and it is difficult to construct an explicit fundamental domain for the action of birational automorphisms. The following powerful general theorem of Looijenga allows us to conclude the existence of a fundamental domain given seemingly weaker information, namely a polyhedral cone whose translates cover the interior of our cone.

\begin{theorem}[\cite{Looijenga}, Proposition-Definition 4.1, Application 4.15] \label{thm-looijenga}
Let $V$ be a real vector space with rational structure, $C$ an open
strictly convex cone in $V$, and $\Gamma$ a subgroup of $GL(V)$ that
stabilizes the cone $C$ and a lattice in $V(\QQ)$. Let $C_+$ denote
the convex hull of all the rational points in the closed cone $\overline{C}$. Then the following
are equivalent:
\begin{enumerate}
\item[(i)] There exists a rational polyhedral cone $\Pi$ in $C_+$ with $C_+ = \Gamma \cdot \Pi$.
\item[(ii)] There exists a rational polyhedral cone $\Pi$ in $C_+$ with $C \subset \Gamma \cdot \Pi$.
\end{enumerate}
Moreover, if these conditions are satisfied, then there exists a
rational polyhedral fundamental domain for the action of $\Gamma$ on
$C_+$.
\end{theorem}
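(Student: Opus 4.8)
The plan is to prove the two implications separately, with essentially all the content living in (ii) $\Rightarrow$ (i) together with the construction of the fundamental domain. The implication (i) $\Rightarrow$ (ii) is immediate: rational points are dense in the open convex cone $C$, so $C$ lies in the convex hull of its own rational points and hence $C \subseteq C_+$; thus $C \subseteq C_+ = \Gamma \cdot \Pi$. Before attacking the converse I would record the geometry of $C_+$. Since $C$ is open and \emph{strictly} convex, any nontrivial convex combination of two distinct boundary points of $\overline{C}$ is interior, so $C_+ = C \cup R$, where $R$ is the union of the rational extreme rays of $\overline{C}$. Consequently producing a fundamental domain amounts to controlling the interior $C$ and the rational boundary rays $R$ separately.

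For the interior I would use proper discontinuity. Equip $C$ with its intrinsic $\Aut(C)$-invariant metric (for instance the Hessian of $-\log \varphi_C$ for the characteristic function $\varphi_C$ of the cone, or equivalently the Hilbert metric on $\PP(C)$). Because $\Gamma$ preserves a lattice it is discrete in $\operatorname{GL}(V)$, and since it acts by isometries of this complete metric it acts properly discontinuously on $C$. Combined with (ii), proper discontinuity forces the family $\{\gamma \Pi\}_{\gamma \in \Gamma}$ to be locally finite inside $C$: every compact subset of $C$ meets only finitely many translates. A Dirichlet-type construction for this proper action then produces a $\Gamma$-invariant, locally finite, rational polyhedral fan $\Sigma$ refining the covering, with $\Sigma / \Gamma$ finite, and a fundamental domain for the action on $C$ is obtained by selecting one chamber from each $\Gamma$-orbit.

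The crux is extending this across $R$, where proper discontinuity \emph{fails}: stabilizers of rational boundary rays contain parabolic or unipotent elements, and translates of $\Pi$ genuinely accumulate there. The key claim is that $R$ consists of only \emph{finitely many} $\Gamma$-orbits of rays, the analogue of finiteness of cusps for arithmetic groups. I would prove this by descent on dimension. For a rational boundary ray $\rho$, let $\Gamma_\rho$ be its stabilizer; passing to $V / \langle \rho \rangle$ and the image of $\overline{C}$ yields lower-dimensional data $(C_\rho, \Gamma_\rho)$ which still satisfies hypothesis (ii), the image of $\Pi$ furnishing a covering polyhedral cone. Induction on $\dim V$ then supplies rational polyhedral fundamental domains for every boundary stratum, and hypothesis (ii) is exactly what forces global finiteness: the single cone $\Pi$ has finitely many faces, yet its $\Gamma$-translates must cover the dense set of rational rays, which bounds the number of ray-orbits.

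Finally I would glue the pieces: adjoin to the interior domain of the second paragraph a choice of representatives for the finitely many boundary strata, and verify that the resulting rational polyhedral cone $\Pi'$ satisfies $C_+ = \Gamma \cdot \Pi'$ with pairwise-disjoint translate interiors, which simultaneously establishes (i) and the ``moreover'' clause. I expect the third paragraph to be the main obstacle. Proper discontinuity drives the entire interior argument but breaks down at the rational boundary, so the finiteness of ray-orbits and the compatibility of the stratum-by-stratum domains into a single globally rational polyhedral cone with non-overlapping translates must be argued by a genuinely different, parabolic-aware inductive method; checking that hypothesis (ii) descends cleanly to each stratum and that the glued pieces do not overlap across strata is the delicate point.
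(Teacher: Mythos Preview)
The paper does not prove this theorem; it is quoted as a black box from Looijenga \cite{Looijenga}, so there is no proof in the paper to compare against. Your sketch is broadly in the spirit of Looijenga's actual argument (proper discontinuity on the interior via an invariant metric, then an inductive treatment of the rational boundary), but it contains a genuine error that would derail the gluing step.

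The problem is your description of $C_+$. You assert that strict convexity of $C$ forces any nontrivial convex combination of two distinct boundary points of $\overline{C}$ to lie in the interior, and hence that $C_+ = C \cup R$ with $R$ the set of rational \emph{extreme rays}. This is false under the intended meaning of ``strictly convex cone'': here, and in Looijenga's paper, the phrase means only that $\overline{C}$ contains no line through the origin, i.e.\ the cone is pointed. It does \emph{not} mean the boundary contains no line segments. Indeed, the theorem is applied in this very paper with $C$ the interior of a movable cone, whose boundary has many flat faces. So $C_+ \setminus C$ generally contains higher-dimensional rational faces, not just rays, and your reduction to ``interior plus finitely many orbits of rays'' is insufficient. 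The inductive descent must be organised over the full poset of rational faces of $\overline{C}$, with the quotient construction you propose carried out at each face (not just each ray), and a Siegel-set style compatibility argument is needed to show that the resulting pieces glue to a single rational polyhedral cone whose $\Gamma$-translates have disjoint interiors across adjacent strata. Your final paragraph correctly flags this gluing as the delicate point, but the oversimplified picture of $C_+$ means the induction as you have set it up does not see the difficulty.
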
 

We will apply this theorem with $V=N^1(X)$ with its natural rational
structure, $C$ the open cone $\operatorname{Int}(\Mov{X})$, and $\Pi$
the image of $\PsAut(X)$ inside $GL(N^1(X))$. Note that $\Mov{X}
\subset \Mov{X}_+$. If we can find a rational polyhedral
cone $\Pi \subset \Mov{X}$ such that $\Gamma \cdot \Pi$ covers $C$,
Theorem \ref{thm-looijenga} guarantees that $\Gamma \cdot \Pi = C_+$, hence that
$\Mov{X}= \Mov{X}_+$. Moreover, $\Gamma$ acts on $\Mov{X}$
with a rational polyhedral fundamental domain, as required by the cone
conjecture. Hence, it is enough to find a $\Pi$ whose translates cover the
interior of $\Mov{X}$. This simplifies the problem in two ways.
First, we need not find a precise fundamental domain, and
second, we need not worry about the boundary of $\Mov{X}$.

For convenience we introduce the following (nonstandard) terminology.
\begin{definition}
In the notation of Theorem \ref{thm-looijenga}, a rational polyhedral cone $\Pi$ such that $C \subseteq \Gamma \cdot \Pi$ will be called a \emph{covering domain} for the action of $\Gamma$ on the cone $C$. 
\end{definition}
By Theorem \ref{thm-looijenga} and the paragraph above, then, to prove the movable cone conjecture for $X$, it will be enough to find a covering domain for the action of $\PsAut(X)$ on $\operatorname{Int} ( \Mov{X} )$.

\subsection{Restriction and lifting} \label{subs-lifting}
In this section we relate properties of line bundles and automorphisms on the generic fiber of a fibration to those on the total space. This will allow us to ``lift" information on the generic fibers of our examples --- which are $K3$ surfaces, and hence well-understood --- to the total space, whose geometry may be more complicated.

We start by introducing relative versions of the notions of \S \ref{sect-intro}. As usual, let $X$ be a $\QQ$-factorial projective variety. If $f : X  \arrow S$ is a contraction (meaning a surjective morphism with connected fibers), denote by $N^1(X/S)$ the vector space $\left ( \Pic(X)/\equiv_S) \right) \otimes \RR$. Here $\equiv_S$ denotes numerical equivalence over $S$: for line bundles $L_1$ and $L_2$ on $X$, we define $L_1 \equiv_S L_2$ to mean that $L_1 \cdot C = L_2 \cdot C$ for all curve $C \subset X$ contracted by $f$. There is a quotient map $q: N^1(X) \arrow N^1(X/S)$. 

A line bundle $L$ is called $f$-effective, respectively $f$-nef or $f$-movable, if $f_*L \neq 0$, respectively $L \cdot C \geq 0$ for all curves $C$ contracted by $f$, or $$\operatorname{codim} (\operatorname{Supp} (\operatorname{coker} (f^* f_* O_X(L) \arrow O_X(L)))) \geq 2.$$ As in the absolute case, we define the relative nef cone $\nef{X/S}$ and relative movable cone $\mov{X/S}$ and their effective subcones $\Nef{X/S}$ and $\Mov{X/S}$. 

Finally, an SQM $\alpha: X \dashrightarrow X'$  of $X$ over $S$ means another $\QQ$-factorial variety $X'$ with a contraction $g: X' \arrow S$ such that $f = g \circ \alpha$ as rational maps. We define $\PsAut(X/S)$  to be the group of all SQMs over $S$ from $X$ to itself.

\begin{proposition} \label{prop-restriction}
Let $X$ be a smooth variety with $b_1(X)=0$. Let $f: X \arrow \PP^k$
be an equidimensional fibration of relative dimension at least 2, with
irreducible fibers in codimension 1, and $b_1(X_\eta)=0$. Then
\begin{enumerate}
\item[(i)] Restriction of divisors to the generic fiber $X_\eta$ induces an isomorphism $N^1(X/\PP^k) \iso N^1(X_\eta)$.
\item[(ii)] The isomorphism in $(i)$ identifies the relative effective movable cone $\Mov{X/\PP^k}$ with the effective movable cone $\Mov{X_\eta}$.
\item[(iii)] In particular, if $f$ has relative dimension 2, then $\Mov{X/\PP^k} \iso \Nef{X_\eta}$.
\end{enumerate}
In particular, these statements hold for our examples of blowups of Fano manifolds of index $n-2$.
\end{proposition}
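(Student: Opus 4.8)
The plan is to establish (i)--(iii) at the level of N\'eron--Severi spaces and then check that our $K3$-fibrations satisfy all the hypotheses. For (i) I would start from the exact sequence attached to a fibration with connected fibers,
$$\Pic(\PP^k)\xrightarrow{\,f^*\,}\Pic(X)\xrightarrow{\,r\,}\Pic(X_\eta)\arrow 0,$$
where $r$ is restriction to the generic fiber. Surjectivity of $r$ comes from spreading out: a line bundle on $X_\eta$ extends over $f^{-1}(U)$ for some dense open $U\subset\PP^k$, and since $X$ is smooth the restriction $\Pic(X)\arrow\Pic(f^{-1}(U))$ is surjective, so it extends to $X$. The kernel of $r$ consists of classes trivial on some $f^{-1}(U)$, hence carried by vertical divisors; here the hypothesis that the fibers are irreducible in codimension $1$ is essential, since together with equidimensionality it forces every vertical prime divisor to map onto a prime divisor $\Delta\subset\PP^k$ over whose generic point the fiber is irreducible, so that the divisor is a multiple of the pullback $f^*\Delta$. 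Thus $\ker r\otimes\RR=f^*\Pic(\PP^k)\otimes\RR$.

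To descend to $N^1$ I use the vanishing of $b_1$. Since $f^*\Pic(\PP^k)$ is numerically trivial over $\PP^k$, restriction induces a map $N^1(X/\PP^k)\arrow N^1(X_\eta)$. It is well defined because a class $D\equiv_{\PP^k}0$ restricts to a numerically trivial class on $X_\eta$: spreading a curve $\Gamma\subset X_\eta$ to a family of fibral curves $\Gamma_s$ gives $D_\eta\cdot\Gamma=D\cdot\Gamma_s=0$ for general closed $s$. It is injective because $b_1(X_\eta)=0$ makes numerical and linear equivalence on $X_\eta$ agree up to torsion: if $D_\eta$ is numerically trivial then $mD_\eta=0$ in $\Pic(X_\eta)$ for some $m>0$, whence $mD\in\ker r=f^*\Pic(\PP^k)$ and so $D\equiv_{\PP^k}0$. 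With the surjectivity above this proves (i).

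For (ii) I match the generators of the two effective movable cones under the isomorphism of (i). If $L$ is $f$-movable and $f$-effective then, using $H^0(X_\eta,L_\eta)=(f_*L)\otimes_{\OO_{\PP^k}}K$ with $K$ the function field of $\PP^k$, the base locus of $L_\eta$ is the trace on $X_\eta$ of the relative base locus; the latter has codimension $\geq 2$ in $X$, so its trace on the generic fiber has codimension $\geq 2$ and $L_\eta$ is movable. Conversely, lift an effective movable class on $X_\eta$ to an $f$-effective $L$ and take the relative fixed decomposition $L=M+F$ with $M$ $f$-movable and $F$ effective; since $L_\eta$ has no fixed component, $F$ is vertical, hence a sum of pullbacks $f^*\Delta_i$ by irreducibility in codimension $1$, so $F\equiv_{\PP^k}0$ and $[L]=[M]$ in $N^1(X/\PP^k)$ with $M$ $f$-movable and effective. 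Passing to closed cones gives (ii). Part (iii) is then immediate: when the relative dimension is $2$, $X_\eta$ is a smooth projective surface, where an effective movable class is nef (an irreducible curve $C$ with $D\cdot C<0$ would be a fixed component of $|D|$), while conversely the nef cone is the closure of the ample cone and ample classes have base-point-free multiples, hence are movable; thus $\Mov{X_\eta}=\Nef{X_\eta}$ and $\Mov{X/\PP^k}\iso\Nef{X_\eta}$.

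For the blowups $X=\Bl_C Z$ with $f\colon X\arrow\PP^{n-2}$ the $K3$-fibration, I verify the hypotheses: $X$ is smooth, being the blowup of the smooth variety $Z$ along the smooth curve $C$; $b_1(X)=b_1(Z)=0$, since $b_1$ is a birational invariant of smooth projective varieties and $Z$ is Fano; $f$ is equidimensional of relative dimension $2$ with generic fiber a $K3$ surface, so $b_1(X_\eta)=0$; and the fibers are irreducible by Proposition \ref{prop-irred}, in particular in codimension $1$. Hence the proposition applies and (iii) gives $\Mov{X/\PP^{n-2}}\iso\Nef{X_\eta}$. The main obstacle is the cone identification of (ii): the content lies not in the generic-fiber computation but in controlling base loci over the non-generic part of $\PP^k$, which is exactly where the smoothness of $X$ (to extend line bundles and to form the relative fixed decomposition) and the irreducibility of the fibers in codimension $1$ (to force vertical fixed parts to be pullbacks, hence numerically trivial over the base) are used. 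A secondary subtlety, handled as above, is the identification in (i) of numerical equivalence over $\PP^k$ with numerical equivalence on the generic fiber over the function field $K$.
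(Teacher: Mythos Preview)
Your argument is correct and tracks the paper's proof closely in parts (i), (iii), and the verification for the examples; in (i) you are in fact more careful than the paper about the passage from $\Pic$ to $N^1$ via the $b_1$ hypotheses. The one genuine difference is in (ii), in the direction ``movable on $X_\eta$ implies $f$-movable''. The paper chooses two explicit divisors $\Delta_1,\Delta_2 \in |D_\eta|$ with codimension-$2$ intersection, extends them over an open $U\subset\PP^k$ using Grauert's theorem, takes Zariski closures to get sections of twists $D+m_i f^*\mathcal O(1)$, and then multiplies up to a common twist. You instead lift $D_\eta$ to an $f$-effective $L$ and take the relative fixed decomposition $L=M+F$, observing that $F$ is vertical (since $L_\eta$ has no fixed part) and hence a pullback by the codimension-$1$ irreducibility hypothesis, so $[L]=[M]$ in $N^1(X/\PP^k)$ with $M$ $f$-movable. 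Your route is a bit slicker and avoids invoking Grauert, at the cost of appealing to the existence of the relative fixed/movable decomposition; the paper's route is more hands-on and makes the $f$-movable representative completely explicit. Both are valid, and both rely in the same essential way on the hypothesis that vertical prime divisors are pullbacks.
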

\begin{proof}
Restriction of divisors gives a surjective homomorphism $\rho: \Pic(X) \arrow \Pic(X_\eta)$. The kernel $\operatorname{ker} \rho$ contains $\operatorname {ker} \left ( \Pic(X) \arrow \Pic(X/\PP^k) \right)$, so there is an induced surjective homomorphism $\overline{\rho}: \Pic(X/\PP^k) \arrow  \Pic(X_\eta)$. The kernel of $\rho$ is spanned by irreducible divisors on $X$ which do not map onto $\PP^k$, but by our hypothesis any such divisor is a multiple of $f^* O(1)$. Since this line bundle is evidently trivial on any curve contracted by $f$, this shows that $\overline{\rho}$ is an isomorphism. Finally, tensoring by $\RR$, we get the required isomorphism $N^1(X/\PP^k) \iso N^1(X_\eta)$. 

To prove (ii), first we will show that the effective cones are identified. For a line bundle $D$ on $X$, let $D_\eta$ denote its restriction to $X_\eta$. By definition, $D$ is an $f$-effective divisor if there exists an open subset $U \subseteq \PP^k$ such that $D(f^{-1}(U))$ is nonzero. A nonzero element of this group restricts to give a nonzero section of $D_\eta$. Conversely, if $H^0(X_\eta,D_\eta) \neq 0$, then Grauert's theorem \cite[Corollary III.12.9]{Hartshorne} shows that $D(f^{-1}(U)) \neq 0$ for some open set $U \subseteq \PP^k$. 

Next suppose that $D$ is a line bundle such that $D_\eta$ is movable. Choose divisors $\Delta_1$, $\Delta_2$ of $D_{|X_\eta}$ such that $\cap_i \Delta_i$ has codimension 2 in $X_\eta$. Again by Grauert's theorem, these divisors extend over an open set $U$ in the base $\PP^k$, and since they intersect only in codimension 2 on the general fiber, we can choose $U$ such that their intersection remains of codimension 2 in $f^{-1}(U)$. Taking the Zariski closure of each $\Delta_i$, we get divisors $\overline{\Delta_i } \in | m_i(f^* O(1)) + D |$ for some integers $m_i$ such that $\cap_i \overline{\Delta_i}$ has codimension 2 in $X$. Now multiplying up by appropriately chosen sections of the bundles $(m-m_i) (f^* O(1) )$ we get sections of a single bundle $D+m(f^* O(1) )$ whose common zero locus has codimension 2 in $X$. This shows that the class of $D$ in $N^1(X/\PP^k)$ belongs to the cone $\operatorname{Mov}^e(X/\PP^k)$. 

Conversely, suppose $D$ is an $f$-movable line bundle. By definition, this means that $$\operatorname{Supp} \left( \operatorname{coker} \left(f^*f_* O_X(D) \rightarrow O_X(D) \right) \right)$$ has codimension at least 2 in $X$. A component of this support is either disjoint form $X_\eta$ or else intersects $X_\eta$ in a set of codimension at least 2. This shows that $\operatorname{coker} \left( H^0(X_\eta, D_\eta) \rightarrow D_\eta \right)$ is supported on a set of codimension at least 2: in other words, $D_\eta$ is movable. 

Finally, $(iii)$ follows immediately from $(ii)$ since by Zariski--Fujita, a movable divisor on a surface is semi-ample, hence nef, so the nef and movable cones coincide.

The hypotheses apply to our examples. Any such $X$ is a blowup of a Fano variety, so has $b_1(X)=0$. The associated fibration is equidimensional since each fiber is a complete intersection.  The generic fiber is a K3 surface, so has $b_1(X_\eta)=0$. Finally, by Proposition \ref{prop-irred}, all fibers of $f$ are irreducible.
\end{proof}

\begin{proposition} \label{prop-auts}
Let $f: X \arrow S$ be a contraction morphism with generic fiber $X_\eta$. Assume that $K_X \cdot C \geq 0$ for all curves $C$ contracted by $f$. Then $\PsAut(X/S) \iso \PsAut(X_\eta)$. In particular if $X_\eta$ is a surface, then $\PsAut(X/S) \iso \Aut(X_\eta)$. 
\end{proposition}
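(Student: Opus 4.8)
The plan is to define a homomorphism $\PsAut(X/S) \arrow \PsAut(X_\eta)$ by restriction to the generic fiber and show it is an isomorphism. First I would check this map is well defined. An element $\alpha \in \PsAut(X/S)$ is an isomorphism in codimension one over $S$, so the locus $Z \subset X$ where $\alpha$ or $\alpha^{-1}$ fails to be a local isomorphism has codimension at least two. Since $\alpha$ commutes with $f$, restriction to $X_\eta$ gives a birational self-map $\alpha_\eta$; and because $Z$ either misses $X_\eta$ (if $Z$ does not dominate $S$) or meets it in codimension at least two (if $f|_Z$ is dominant, its generic fiber has dimension $\leq \dim Z - \dim S \leq \dim X_\eta - 2$), the map $\alpha_\eta$ is an isomorphism in codimension one, i.e. lies in $\PsAut(X_\eta)$. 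That restriction is a homomorphism is clear, and it is injective since $\alpha_\eta = \mathrm{id}$ forces $\alpha$ to be the identity on the dense open set $f^{-1}(\eta)$, hence on $X$.

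For surjectivity, given $\beta \in \PsAut(X_\eta)$ I would spread it out: a birational self-map of the generic fiber over $k(S)$ extends to a birational self-map over some open $U \subseteq S$, and then, since $f$ is proper, to a birational self-map $\alpha: X \dashrightarrow X$ over $S$ with $\alpha_\eta = \beta$. It remains to promote $\alpha$ to a genuine pseudo-automorphism, and this is exactly where the hypothesis that $K_X \cdot C \geq 0$ for all $f$-contracted curves $C$ (that is, $K_X$ is $f$-nef) enters. I claim that \emph{any} birational self-map over $S$ with $K_X$ $f$-nef is automatically an isomorphism in codimension one.

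This claim is the key step, and the main obstacle; I would prove it by the negativity lemma \cite[Lemma 3.39]{KollarMori}. Choose a common resolution $p, q: W \arrow X$ with $q = \alpha \circ p$, and set $E = p^* K_X - q^* K_X$, a divisor supported on the divisors exceptional for $p$ or for $q$. A prime divisor $D \subset X$ is contracted by $\alpha$ precisely when its strict transform on $W$ contributes a $p$-non-exceptional component to $E$, so it suffices to show $E = 0$. For any curve $C$ contracted by $q$ one has $E \cdot C = K_X \cdot p_* C \geq 0$, since $f \circ p = f \circ q$ forces $p_* C$ to be $f$-contracted; thus $E$ is $q$-nef, and symmetrically $-E$ is $p$-nef. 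Moreover, because $X$ is smooth all discrepancies are positive, so $p_* E = K_X - \alpha^* K_X$ is effective while $q_* E = (\alpha^{-1})^* K_X - K_X$ is anti-effective. The negativity lemma applied to $p$ (using that $-E$ is $p$-nef and $p_* E \geq 0$) gives $E \geq 0$, and applied to $q$ (using that $E$ is $q$-nef and $q_* E \leq 0$) gives $E \leq 0$. Hence $E = 0$, so $\alpha$ contracts no divisor; running the same argument for $\alpha^{-1}$ shows $\alpha$ is an isomorphism in codimension one, i.e. $\alpha \in \PsAut(X/S)$ with $\alpha_\eta = \beta$. This proves surjectivity.

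Finally, for the ``in particular'' statement, suppose $X_\eta$ is a surface, which is smooth in our situation (the generic fibers of our $K3$ fibrations). On a smooth projective surface every pseudo-automorphism is an automorphism: being an isomorphism in codimension one means being an isomorphism away from finitely many points, and a birational self-map of a smooth projective surface contracting no curve extends to a morphism, hence to an isomorphism. Therefore $\PsAut(X_\eta) = \Aut(X_\eta)$, and combining with the isomorphism established above yields $\PsAut(X/S) \iso \Aut(X_\eta)$.
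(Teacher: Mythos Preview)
Your proof is correct and follows essentially the same strategy as the paper's: restrict to get an injective map $\PsAut(X/S)\to\PsAut(X_\eta)$, spread out to get a birational self-map over $S$, and then use relative $K$-nefness to upgrade it to a pseudo-automorphism. The only real difference is that the paper dispatches the last step in one line by citing \cite[Theorem 3.52]{KollarMori} (birational maps between relatively minimal models are isomorphisms in codimension one), whereas you unpack that citation and reprove it via the negativity lemma; your argument with $E=p^*K_X-q^*K_X$ is exactly the standard proof of that theorem.
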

\begin{proof}
A pseudo-automorphism of $X$ over $S$ preserves the fibers of $f$, and so restricts to a pseudo-automorphism of $X_\eta$. This gives an injective homomorphism $\PsAut(X/S) \iso \PsAut(X_\eta)$.

Conversely, a pseudo-automorphism of $X_\eta$ extends uniquely to a birational map $\alpha: X \dashrightarrow X$ commuting with $f$. But now $K_X \cdot C \geq 0$ for all curves $C$ contracted by $f$: in other words, $X$ is relatively minimal over $S$. Hence by \cite[Theorem 3.52]{KollarMori} any birational map from $X$ to itself over $S$ must be a pseudo-automorphism.
\end{proof}
Putting these two propositions together yields the following.
\begin{corollary}
Let $f: X \arrow \PP^k$ be a fibration satisfying the hypotheses of Propositions \ref{prop-restriction} and \ref{prop-auts}. If the movable cone conjecture holds for the generic fiber $X_\eta$, then it also holds for the morphism $f$. That is, the group $\PsAut(X/\PP^k)$ acts on the cone $\Mov{X/\PP^k}$ with a rational polyhedral fundamental domain. 

In particular if $X_\eta$ is a $K3$ surface, then the cone conjecture holds for $f$. 
\end{corollary}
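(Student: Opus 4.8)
The plan is to transport the cone conjecture from the generic fiber to the relative setting by combining the two isomorphisms just established, the only real work being to check that they are compatible with the relevant group actions. Proposition \ref{prop-restriction} supplies a linear isomorphism $\phi \colon N^1(X/\PP^k) \to N^1(X_\eta)$, which is defined over $\QQ$ since it is induced by restriction of divisor classes, and which carries $\Mov{X/\PP^k}$ onto $\Mov{X_\eta}$. Proposition \ref{prop-auts} supplies a group isomorphism $\PsAut(X/\PP^k) \iso \PsAut(X_\eta)$, $\alpha \mapsto \alpha_\eta$, obtained by restricting a relative pseudo-automorphism to the generic fiber. The first thing I would verify is that these two maps are equivariant: for $\alpha \in \PsAut(X/\PP^k)$ and $D \in N^1(X/\PP^k)$ one has $\phi(\alpha^* D) = \alpha_\eta^* \phi(D)$. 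This is immediate from the construction, since $\alpha$ commutes with $f$, so pulling a class back and then restricting to $X_\eta$ agrees with restricting first and then applying the induced action of $\alpha_\eta$ on $N^1(X_\eta)$.

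With equivariance in hand, the fundamental domain transfers formally. By hypothesis the movable cone conjecture holds for $X_\eta$, so there is a rational polyhedral cone $\Pi_\eta \subset \Mov{X_\eta}$ that is a fundamental domain for the action of $\PsAut^*(X_\eta)$. I would then set $\Pi := \phi^{-1}(\Pi_\eta)$. Because $\phi$ is a $\QQ$-linear isomorphism, $\Pi$ is again rational polyhedral and lies in $\Mov{X/\PP^k}$. Equivariance shows that the $\PsAut^*(X/\PP^k)$-translates of $\Pi$ are precisely the $\phi$-preimages of the $\PsAut^*(X_\eta)$-translates of $\Pi_\eta$, so they tile $\Mov{X/\PP^k}$ with pairwise disjoint interiors; hence $\Pi$ is a fundamental domain for $\PsAut^*(X/\PP^k)$ acting on $\Mov{X/\PP^k}$. (The remaining finiteness statement in the conjecture then follows by the standard argument, since the polyhedral cone $\Pi$ meets only finitely many nef chambers of SQMs and every such chamber is $\PsAut$-equivalent to one meeting $\Pi$.)

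For the final assertion I would specialize to the case where $X_\eta$ is a $K3$ surface, so that $f$ has relative dimension $2$. Here Proposition \ref{prop-restriction}(iii) identifies $\Mov{X/\PP^k}$ with $\Nef{X_\eta}$, and Proposition \ref{prop-auts} identifies $\PsAut(X/\PP^k)$ with $\Aut(X_\eta)$. In this incarnation the movable cone conjecture for $X_\eta$ is exactly the statement that $\Aut^*(X_\eta)$ acts on $\Nef{X_\eta}$ with a rational polyhedral fundamental domain, which is the classical theorem of Looijenga--Sterk \cite{Sterk1985}. Feeding this known result into the argument above renders the conclusion unconditional in the $K3$ case.

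The step I expect to be the only genuine point requiring care is the equivariance of $\phi$ and of $\alpha \mapsto \alpha_\eta$: one must confirm that the identifications of Propositions \ref{prop-restriction} and \ref{prop-auts}, which are constructed independently, are truly compatible, i.e. that restriction of divisor classes to the generic fiber intertwines the two group actions. Once this compatibility is checked, everything else is a formal transfer of the fundamental domain along a rational linear isomorphism, together with the invocation of the $K3$ cone theorem.
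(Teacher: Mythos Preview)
Your proposal is correct and matches the paper's approach: the paper presents this corollary without proof, stating only that it follows by ``putting these two propositions together,'' and your write-up is exactly the careful unpacking of that formal transfer (equivariance of the restriction map with the two group actions, then pulling back a fundamental domain, then invoking Sterk for the $K3$ case). There is nothing to add.
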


To prove the Morrison--Kawamata conjecture in our examples, we need to lift the fundamental domain from the relative cone $\Mov{X/\PP^k}$ to the absolute cone $\Mov{X}$. The following theorem, proved in an earlier work \cite{CPS}. shows how to do this. 


\begin{theorem}[\cite{CPS},  Proposition 5.8, Theorem 5.9] \label{theorem-lifting}
Let $X$ be one of our examples, and let $V$ denote a fundamental domain for the action of $\PsAut(X/\PP^k)$ on $\Mov{X/\PP^k}$. Suppose that we can find a finite collection $\{X_i\}_{i \in I}$ of SQMs of $X$ with the following properties:
\begin{enumerate}
\item[(a)] Each effective nef cone $\Nef{X_i}$ is rational polyhedral;
\item[(b)] $-K_X$ is nef on each $X_i$;
\item[(c)] Every codimension-1 face of each of the cones $\Nef{X_i}$
  which intersects the big cone is dual to the class of a $K$-trivial
  curve;
\item[(d)] The cone $V$ is covered by the union $q( U) = \bigcup_{i \in I} q(\Nef{X_i})$. (Here $q$  denotes the quotient map  $N^1(X) \arrow N^1(X/\PP^k)$.) 
\end{enumerate}
Then the movable cone conjecture holds for $X$. 
\end{theorem}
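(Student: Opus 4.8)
The plan is to reduce the absolute movable cone conjecture for $X$ to the relative version already established by the corollary above, using Looijenga's Theorem \ref{thm-looijenga} to upgrade a covering statement into the existence of a rational polyhedral fundamental domain. The overall strategy is to produce a rational polyhedral covering domain $\Pi \subset \Mov{X}$ for the action of $\PsAut(X)$ on $\operatorname{Int}(\Mov{X})$, built from the finitely many nef cones $\Nef{X_i}$ together with the relative fundamental domain $V$, and then invoke Looijenga to conclude.

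First I would set up the relationship between the absolute group $\PsAut(X)$ and the relative group $\PsAut(X/\PP^k)$. Since the fibration $f: X \arrow \PP^k$ is defined by the semiample divisor class $\pi^*H - F$ which spans a distinguished ray of the cones in question, any pseudo-automorphism of $X$ preserves the fibration structure, and so $\PsAut(X/\PP^k)$ is a subgroup of $\PsAut(X)$; the quotient map $q: N^1(X) \arrow N^1(X/\PP^k)$ is equivariant for this inclusion. By Propositions \ref{prop-restriction} and \ref{prop-auts}, the relative group acts on $\Mov{X/\PP^k} \iso \Nef{X_\eta}$ with the rational polyhedral fundamental domain $V$ supplied by the hypothesis. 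The key geometric input is that $\Mov{X}$ fibers over $\Mov{X/\PP^k}$ via $q$: the fiber direction is spanned by the pullback $f^* O(1) = \pi^*H - F$, which is nef and $K$-trivial, so $\Mov{X}$ is a ``cylinder'' over the relative cone in the $q$-direction, bounded by faces meeting the distinguished ray.

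Next I would assemble the covering domain. The finitely many cones $\Nef{X_i}$ from hypotheses (a)--(c) are rational polyhedral, lie in $\Mov{X}$ (each $X_i$ is an SQM of $X$, so $\Nef{X_i} \subseteq \Mov{X}$), and by (d) their images cover $V$ under $q$. I would take $\Pi$ to be a rational polyhedral cone containing $\bigcup_{i \in I} \Nef{X_i}$, for instance the convex hull of these finitely many polyhedral cones together with the ray spanned by $\pi^*H - F$; this is again rational polyhedral. The claim is that $\Pi$ is a covering domain, i.e. $\operatorname{Int}(\Mov{X}) \subseteq \PsAut^*(X) \cdot \Pi$. To verify this, take a point $x \in \operatorname{Int}(\Mov{X})$; its image $q(x)$ lies in $\operatorname{Int}(\Mov{X/\PP^k})$, so by the relative cone conjecture there is some $\gamma \in \PsAut(X/\PP^k) \subseteq \PsAut(X)$ with $\gamma \cdot q(x) \in V$. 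Since $q$ is $\gamma$-equivariant, $q(\gamma \cdot x) \in V$, and then by (d) we have $q(\gamma \cdot x) \in q(\Nef{X_i})$ for some $i$. The final move is to correct for the fiber direction: since $q(\gamma\cdot x)$ and a point of $\Nef{X_i}$ share the same image under $q$, they differ by a multiple of the class $\pi^*H - F$ spanning $\ker q$, and because $\pi^*H - F$ lies in $\Pi$ and the relevant classes are movable, $\gamma \cdot x$ itself lands in $\Pi$ (after possibly adjusting within the cylinder). Conditions (b) and (c), guaranteeing that each $X_i$ is a relative minimal model with $K$-trivial boundary faces, are what ensure these SQMs genuinely tile a neighborhood of the nef cone inside $\Mov{X}$ rather than leaving gaps, so that the union over $i$ together with the translates does cover the interior.

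The main obstacle I expect is precisely this last point: controlling the fiber (kernel) direction of $q$ and showing that covering the \emph{relative} cone lifts to covering the \emph{absolute} cone. A priori $q$ could identify points of $\Mov{X}$ that are far apart along the ray $\ker q = \langle \pi^*H - F\rangle$, and knowing $q(\gamma \cdot x) \in q(\Pi)$ does not immediately give $\gamma \cdot x \in \Pi$. Resolving this requires the structural fact that $\Mov{X}$ is a cylinder over $\Mov{X/\PP^k}$ bounded along the fiber direction by the $K$-trivial contraction $f$ and by the divisorial contraction to $Z$; this is exactly the content of Theorem \ref{theorem-lifting} as proved in \cite{CPS}, and conditions (a)--(c) are the hypotheses that make the finitely many $\Nef{X_i}$ close up along the fiber direction near the nef cone of $X$. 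Once the covering domain $\Pi$ is in hand, Theorem \ref{thm-looijenga} applied with $V = N^1(X)$, $C = \operatorname{Int}(\Mov{X})$, and $\Gamma = \PsAut^*(X)$ immediately yields a rational polyhedral fundamental domain and the finiteness of SQM-classes, completing the verification of the movable cone conjecture for $X$.
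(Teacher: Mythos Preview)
The paper does not prove this theorem at all; it merely cites \cite[Proposition 5.8, Theorem 5.9]{CPS} and moves on. So there is no in-paper argument to compare your proposal against. On its own merits, your outline correctly identifies the architecture---use the relative cone conjecture to move an arbitrary interior point of $\Mov{X}$ over the fundamental domain $V$, then invoke Looijenga---but it is not a proof, and at the decisive step it becomes circular. You write that resolving the fiber-direction issue ``is exactly the content of Theorem~\ref{theorem-lifting} as proved in \cite{CPS}'': you are invoking the theorem you set out to prove.

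The substantive gap is the passage from $q(\gamma\cdot x)\in q(\Nef{X_i})$ to $\gamma\cdot x\in\Pi$. Your ``cylinder'' heuristic is the right intuition, but you never establish it, and it is precisely where hypotheses (b) and (c) do real work. Since $-K_X$ is a positive multiple of $f^*\mathcal O(1)$, the kernel of $q$ is the ray $\RR_{\geq 0}(-K_X)$. Condition (b) says this ray lies in every $\Nef{X_i}$; condition (c) says that every facet of $\Nef{X_i}$ meeting the big cone is orthogonal to a $K$-trivial curve class, hence \emph{contains} $-K_X$. Together these force each $\Nef{X_i}$ to be saturated in the $\ker q$ direction: its only facets transverse to $-K_X$ lie on the boundary of $\Mov{X}$ (corresponding to fibrations or divisorial contractions), so $q^{-1}\bigl(q(\Nef{X_i})\bigr)\cap\Mov{X}=\Nef{X_i}$. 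That identity is what converts ``$q(\gamma\cdot x)\in V$'' into ``$\gamma\cdot x\in\bigcup_i\Nef{X_i}$''. Your sketch asserts the conclusion of this step but does not supply the argument; filling it in is the actual content of the cited result.
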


\subsection{$K3$ surfaces}
In this section we summarize the results we need from the theory of $K3$ surfaces. Recall that the $K3$ lattice $\Lambda$ is the unique even unimodular lattice of signature $(3,19)$. If $X$ is a $K3$ surface with Picard lattice$\Pic(X)$, then there is an embedding $N \hookrightarrow \Lambda$, and the complement $T:=\Pic(X)^\perp \subset \Lambda$ is called the \emph{transcendental lattice} of $X$. The \emph{Global Torelli Theorem} of Piatetskii--Shapiro and Shafarevich characterizes automorphism of the surface $X$ in terms of the algebra of these lattices. We state it in a form that is convenient for our purposes. 

\begin{theorem}[Global Torelli Theorem] \label{theorem-torelli}
Let $X$ be a $K3$ surface over an algebraically closed field of characteristic zero, and $\alpha \in O(\Pic(X))$ an automorphism of its N\'eron--Severi lattice. Then $\alpha$ is induced by an automorphism $f: X \arrow X$ if and only if:
\begin{enumerate}
\item $\alpha$ preserves the set of nodal classes in $\Pic(X)$.
\item $\alpha$  can be extended to an automorphism $\tilde{\alpha}: \Lambda \arrow \Lambda$ such that the induced automorphisms on the (naturally isomorphic) discriminant groups $\Pic(X)^*/\Pic(X)$ and $T^*/T$ are the same. 
\end{enumerate}
In particular $\alpha \in O(\Pic(X))$ is any automorphism satisfying the first condition, then some positive power of $\alpha$ satisfies the second condition too, hence extends to an automorphism of $X$.
\end{theorem}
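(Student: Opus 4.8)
The plan is to \emph{deduce} this statement from the classical strong form of the Global Torelli theorem, which I take as known: a Hodge isometry $\phi$ of the full second cohomology lattice $H^2(X,\ZZ) \iso \Lambda$ is induced by a (unique) automorphism of $X$ if and only if $\phi$ maps the K\"ahler cone of $X$ into itself, equivalently maps one ample class to an ample class. Thus, given $\alpha \in O(\Pic(X))$, the problem reduces to two essentially independent tasks: (i) extend $\alpha$ to a \emph{Hodge} isometry $\tilde\alpha$ of $\Lambda$, and (ii) arrange that $\tilde\alpha$ preserves the ample cone. I would show that condition (2) is exactly what is needed for (i), that condition (1) is exactly what is needed for (ii), and that both conditions are forced when $\alpha$ comes from an automorphism.

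For task (ii), recall that the positive cone, one of the two components of $\{x \in \Pic(X)\otimes\RR : x^2>0\}$, is subdivided into chambers by the hyperplanes $\delta^\perp$ as $\delta$ ranges over the nodal classes, and the ample cone is precisely the chamber cut out by $\delta \cdot H > 0$ for all nodal $\delta$. An isometry of $\Pic(X)$ that preserves the \emph{set} of nodal classes therefore permutes these walls compatibly and must carry the ample chamber to itself; in particular it cannot interchange the two components of the positive cone, since the reversal $-1$ sends an effective $(-2)$-class to a non-effective one and hence fails to preserve nodal classes. Conversely, any automorphism $f$ of $X$ preserves the classes of smooth rational curves, hence preserves nodal classes. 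So condition (1) holds if and only if $\alpha$ preserves the ample cone.

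For task (i), I would use Nikulin's theory of discriminant forms. The K3 lattice $\Lambda$ is an overlattice of the orthogonal direct sum $\Pic(X) \oplus T$, determined by a gluing isomorphism $\gamma : \Pic(X)^*/\Pic(X) \arrow T^*/T$ that is an anti-isometry of discriminant forms. An isometry $\alpha \oplus \beta$ of $\Pic(X)\oplus T$ extends to an isometry of $\Lambda$ if and only if the automorphism induced by $\beta$ on $T^*/T$ agrees, under $\gamma$, with the automorphism induced by $\alpha$ on $\Pic(X)^*/\Pic(X)$; this is precisely the compatibility demanded in condition (2). Moreover, since the transcendental lattice carries the entire non-algebraic part of the Hodge structure (the period lies in $T\otimes\CC$), the extension $\tilde\alpha = \alpha\oplus\beta$ is a Hodge isometry if and only if $\beta$ is a Hodge isometry of $T$, and the identity on $T$ always qualifies. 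Thus conditions (1) and (2) together furnish a Hodge isometry of $\Lambda$ extending $\alpha$ and fixing the ample cone, and classical Torelli then produces the desired $f$. For the converse, if $\alpha = f^*|_{\Pic(X)}$ then $f^*$ is a Hodge isometry of all of $\Lambda$, and its restriction to $T$ supplies the matching extension required by (2).

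Finally, the ``in particular'' statement follows by a finiteness argument. The discriminant group $\Pic(X)^*/\Pic(X)$ is finite, so $\alpha$ acts on it with some finite order $m$; then $\alpha^m$ acts trivially there and hence, gluing with $\Id_T$ (a Hodge isometry), extends to a Hodge isometry of $\Lambda$, so $\alpha^m$ satisfies condition (2). Since powers of $\alpha$ still preserve the set of nodal classes, condition (1) persists for $\alpha^m$, and therefore $\alpha^m$ is induced by an automorphism of $X$. I expect the main obstacle to be the bookkeeping in task (i): one must simultaneously satisfy the arithmetic gluing condition from Nikulin's theory \emph{and} keep the chosen extension on $T$ a Hodge isometry, and one must check carefully that ``preserving nodal classes'' genuinely pins down the correct component of the positive cone, rather than merely fixing the chamber structure up to sign.
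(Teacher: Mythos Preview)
The paper does not prove this theorem: it is stated as the classical result of Piatetskii--Shapiro and Shafarevich and is used as a black box. So there is no ``paper's proof'' to compare against; what you have written is the standard reduction of this formulation to the strong Torelli theorem via Nikulin's gluing of discriminant forms, and the outline is correct in its essentials.

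One genuine point deserves more care, and you yourself flag it at the end. Your argument that condition~(1) forces $\alpha$ to preserve the \emph{correct component} of the positive cone relies on the existence of at least one nodal class: you say $-1$ cannot preserve nodal classes because it sends an effective $(-2)$-class to a non-effective one. But if $\Pic(X)$ contains no $(-2)$-classes at all --- which is precisely the situation in most of the examples in this paper --- then condition~(1) is vacuous and your argument gives no obstruction to $\alpha$ swapping the two components. In that case $-\operatorname{Id}$ on $\Pic(X)$ satisfies (1) trivially and satisfies (2) via the extension $-\operatorname{Id}_\Lambda$, yet it certainly does not come from an automorphism of $X$. The usual fix is either to strengthen condition~(1) to ``$\alpha$ preserves the ample cone'' (equivalently, maps one ample class to an ample class), or to observe that one may always post-compose with $-\operatorname{Id}$ if needed; the latter is harmless for the applications here and is exactly what the paper does in Corollary~\ref{corollary-torellispecial}. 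For the ``in particular'' clause this issue disappears anyway, since $\alpha^2$ automatically preserves the positive-cone component, and your finiteness argument on the discriminant group then goes through cleanly.
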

Here a {\em nodal class} is the class of a smooth rational curve on $X$. Any such class $x$ satisfies $x^2=-2$, by adjunction. Any class $x \in \Pic(X)$ with $x^2=-2$ is called a {\em $(-2)$-class}.

This has a particularly useful consequence for $K3$ surfaces of Picard number 2.
\begin{theorem}[\cite{GLP} Example 3.5] \label{corollary-torelli2}
Let $X$ be a $K3$ surface with $\rho(X)=2$ over an algebraically closed field of characteristic zero. Suppose that the order of the discriminant group $\Pic(X)^*/\Pic(X)$ does not equal 2,3,4,5,8,11, or 25. Then an automorphism $\alpha \in O(\Pic(X))$ preserving the nodal classes is induced by an automorphism $f: X \arrow X$ if and only if $\alpha$ acts as $\pm \operatorname{id}$ on $\Pic(X)^*/\Pic(X)$. 
\end{theorem}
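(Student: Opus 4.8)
The plan is to read the criterion off from the Global Torelli Theorem (Theorem~\ref{theorem-torelli}), making its condition (2) explicit via Nikulin's theory of discriminant forms and then controlling the possible discriminant actions of Hodge isometries of the transcendental lattice. First I would record the discriminant dictionary. Since $\rho(X)=2$, the lattice $\Pic(X)$ is even of rank $2$ and signature $(1,1)$, sitting primitively in the unimodular K3 lattice $\Lambda$ with orthogonal complement the transcendental lattice $T$ of rank $20$ and signature $(2,18)$. Nikulin's theory supplies a canonical isomorphism $\gamma\colon A_{\Pic}:=\Pic(X)^*/\Pic(X)\arrow A_T:=T^*/T$ carrying the discriminant form $q_{\Pic}$ to $-q_T$, and tells us that an isometry $\alpha\in\operatorname{O}(\Pic(X))$ extends to an isometry of $\Lambda$ --- necessarily of the form $\alpha\oplus\psi$ with $\psi\in\operatorname{O}(T)$ --- exactly when $\bar\psi=\gamma\,\bar\alpha\,\gamma^{-1}$ on $A_T$, where the bars denote the induced actions on the discriminant groups. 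This is precisely the matching condition (2) of Theorem~\ref{theorem-torelli}; and since the extension must come from a genuine automorphism of $X$, the isometry $\psi$ must in addition be a Hodge isometry of $T$. Thus, for $\alpha$ preserving the nodal classes, $\alpha$ is induced by an automorphism of $X$ if and only if $\gamma\,\bar\alpha\,\gamma^{-1}$ lies in the image of the Hodge-isometry group $\operatorname{O}_{\mathrm{Hodge}}(T)$ in $\operatorname{O}(A_T)$. The theorem is thereby reduced to computing this image, and since $\gamma$ identifies $\operatorname{O}(A_{\Pic})$ with $\operatorname{O}(A_T)$ (the sign of the form is irrelevant for its isometries), I may phrase everything on $A_{\Pic}$.

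The forward implication is then immediate and uses no hypothesis on $d$: if $\bar\alpha=\pm\operatorname{id}$ on $A_{\Pic}$, take $\psi=\pm\operatorname{id}_T$, a Hodge isometry inducing $\pm\operatorname{id}$ on $A_T$, so the matching condition holds and $\alpha$ is induced by an automorphism. In particular $\{\pm\operatorname{id}\}$ always lies in the relevant image, so the content is the reverse inclusion under the hypothesis on $d$.

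For the converse I would first establish the clean structural fact that \emph{every} Hodge isometry $\psi$ of $T$ has finite order, with all eigenvalues roots of unity. Indeed $\psi$ preserves both the positive-definite real $2$-plane $(H^{2,0}\oplus H^{0,2})_{\RR}$ and its orthogonal complement $(H^{1,1}\cap T)_{\RR}$, which is negative definite; being orthogonal on each definite piece, $\psi$ is diagonalizable with all eigenvalues on the unit circle, and being integral its eigenvalues are algebraic units, hence roots of unity by Kronecker's theorem. In particular $\psi$ acts on the period line $H^{2,0}$ by a root of unity $\zeta_m$, and since the cyclotomic polynomial $\Phi_m$ divides the rank-$20$ characteristic polynomial of $\psi$ we get $\phi(m)\le 20$, leaving only finitely many possible orders $m$. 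This is why the infinite part of $\Aut(X)$ acts on $T$ through a finite group, and it reduces the computation of the image in $\operatorname{O}(A_T)$ to a finite problem.

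The main obstacle is the last step: showing that, for each of the finitely many admissible orders $m$, the induced isometry of the finite quadratic form $q_T$ is $\pm\operatorname{id}$ unless the order $d=|A_T|$ is one of $2,3,4,5,8,11,25$. I would carry this out by decomposing $q_T$ into its $p$-adic (cyclic and elementary) Jordan pieces via Nikulin--Wall and matching, prime by prime, the constraints imposed by a cyclotomic isometry of order $m$ against the constraint $\phi(m)\le 20$; a non-$\pm$ action survives only for the listed orders $d$, with the prime $2$ requiring the most careful bookkeeping because of its $2$-adic gluing subtleties. Verifying that exactly $\{2,3,4,5,8,11,25\}$ occur --- and that no further exceptions are hidden in the $2$-adic analysis --- is where essentially all of the work lies; the remainder of the argument is formal manipulation of discriminant forms together with the Global Torelli criterion.
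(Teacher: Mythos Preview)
The paper does not prove this statement; it is quoted from \cite{GLP}. The argument there is carried out entirely on the Picard side: for a rank-$2$ even indefinite lattice $L$ of discriminant $d$, one computes the orthogonal group $O(q_L)$ of the discriminant form directly from the classification of binary forms and shows that $O(q_L)=\{\pm\operatorname{id}\}$ precisely when $d$ lies outside the listed exceptions. Once this is known, the ``only if'' direction is vacuous --- every $\bar\alpha\in O(q_{\Pic})$ is already $\pm\operatorname{id}$ --- and the ``if'' direction is exactly your easy forward implication via $\psi=\pm\operatorname{id}_T$.

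Your route through the rank-$20$ transcendental lattice is both unnecessary and incomplete. The finite-order and cyclotomic analysis of Hodge isometries of $T$ is correct as stated, but it does not produce the exceptional set: the numbers $2,3,4,5,8,11,25$ are discriminants of binary forms, not cyclotomic orders, and nothing in your argument connects the order $m$ of a Hodge isometry (constrained by $\phi(m)\le 20$) to the order $d=|A_T|$ of the discriminant group. Your final paragraph concedes that ``essentially all of the work lies'' in this step, but as you describe it --- matching cyclotomic constraints prime-by-prime against the $p$-adic Jordan pieces of $q_T$ --- you would need control over the genus of $T$, which varies with the particular $K3$ surface even among those with fixed $\Pic(X)$. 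The uniform fact you actually need, namely $O(q_T)=\{\pm\operatorname{id}\}$ for non-exceptional $d$, follows at once from the binary-form computation on the rank-$2$ lattice $\Pic(X)$ via Nikulin's identification $(A_{\Pic},q_{\Pic})\cong(A_T,-q_T)$; once you use that, the entire transcendental-side analysis becomes superfluous.
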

We will use these theorems to produce automorphisms of the $K3$ surfaces appearing in our examples. As stated they only apply to surfaces over algebraically closed fields, but Kawamata \cite[Remark 2.2]{Kawamata1997} showed how to extend them to any base field. We use a variant with stronger hypotheses, but that suffices for our purposes.
\begin{corollary} \label{corollary-torellispecial}
Let $X$ be a $K3$ surface over a field $k$ of characteristic zero, and let $\overline{X}=X \times_k \overline{k}$ be the base-change to the algebraic closure of $k$. Suppose that $N^1(X)=N^1(\overline{X})$. Then the conclusions of Theorems \ref{theorem-torelli} and \ref{corollary-torelli2} apply to $X$. 

In particular, if there are no $(-2)$-classes in $\Pic(X)$, then if $O(\Pic(X))$ is infinite, then so too is $\Aut^*(X)$.
\end{corollary}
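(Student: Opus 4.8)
The plan is to deduce the statement for $X$ over the (possibly non-closed) field $k$ from the already-known statements over $\overline{k}$ (Theorems \ref{theorem-torelli} and \ref{corollary-torelli2}) by a Galois-descent argument, using the hypothesis $N^1(X)=N^1(\overline{X})$ to trivialize the relevant Galois action, and then to extract the final ``in particular'' from the resulting statement over $k$ together with the finiteness of the discriminant group.

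First I would record what the hypothesis buys us. Since $X$ is a $K3$ surface, $\Pic$ is torsion-free and equals the N\'eron--Severi group, so $N^1 = \Pic \otimes \RR$; the equality $N^1(X)=N^1(\overline{X})$ therefore says that every divisor class on $\overline{X}$ is already defined over $k$, i.e. $\Pic(X)=\Pic(\overline{X})$ and the natural action of $\operatorname{Gal}(\overline{k}/k)$ on $\Pic(\overline{X})$ is trivial. In particular every $\alpha \in O(\Pic(X))$ is automatically Galois-invariant, and the nodal classes, the lattice $\Pic(X)$, its dual, and the discriminant group $\Pic(X)^*/\Pic(X)$ are all defined over $k$, so that the two Torelli-type conditions are Galois-stable. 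Given $\alpha$ satisfying them, Theorem \ref{theorem-torelli} applied to $\overline{X}$ produces $\bar f \in \Aut(\overline{X})$ with $\bar f^* = \alpha$; since $\alpha$ is Galois-fixed, so is the induced isometry, and Kawamata's descent \cite[Remark 2.2]{Kawamata1997} then yields an automorphism $f \in \Aut(X)$ over $k$ inducing $\alpha$. The converse direction is immediate, so the biconditionals of Theorems \ref{theorem-torelli} and \ref{corollary-torelli2} hold verbatim for $X$.

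For the ``in particular'' statement, assume $\Pic(X)$ contains no $(-2)$-class. Then there are no nodal classes, so the first Torelli condition is vacuous for every isometry. Consider the action $r\colon O(\Pic(X)) \arrow \operatorname{Aut}\big(\Pic(X)^*/\Pic(X)\big)$ on the finite discriminant group, and let $K = \ker r$. Every $\alpha \in K$ acts trivially on $\Pic(X)^*/\Pic(X)$; extending by the identity on the transcendental lattice $T$, which then acts trivially on $T^*/T$, verifies the second condition of Theorem \ref{theorem-torelli}, while the first is vacuous. By the statement just proved, each such $\alpha$ is realized by an automorphism of $X$, so $K \subseteq \Aut^*(X)$. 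Since $\operatorname{Aut}\big(\Pic(X)^*/\Pic(X)\big)$ is finite, $K$ has finite index in $O(\Pic(X))$; hence if $O(\Pic(X))$ is infinite then so is $K$, and a fortiori so is $\Aut^*(X)$. (Note that the finite-index kernel is essential here: the mere fact that every isometry has a power in $\Aut^*(X)$, which is what the last sentence of Theorem \ref{theorem-torelli} gives directly, does not by itself force $\Aut^*(X)$ to be infinite.)

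The main obstacle is the descent step. The lift $\bar f$ of $\alpha$ is not unique: two automorphisms of $\overline{X}$ inducing the same isometry of $\Pic(\overline{X})$ differ by an element of the finite group of automorphisms acting trivially on $\Pic(\overline{X})$, so a priori the Galois orbit of $\bar f$ lives in a finite coset rather than reducing to a single fixed point, and one must check that a Galois-invariant representative can be chosen. This is exactly the point handled by Kawamata; the hypothesis $N^1(X)=N^1(\overline{X})$ is what makes it go through cleanly, since it forces the induced isometry to be Galois-invariant and thereby removes the Galois twisting of the Picard lattice that would otherwise obstruct descent.
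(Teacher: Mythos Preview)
Your proposal is correct and follows essentially the same approach as the paper: both invoke Kawamata's descent \cite[Remark 2.2]{Kawamata1997} for the first part, and both prove the ``in particular'' by showing that $\Aut^*(X)$ contains the finite-index subgroup $\ker\big(O(\Pic(X)) \to \operatorname{Aut}(\Pic(X)^*/\Pic(X))\big)$.

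One small point worth noting: for the descent step the paper is slightly more explicit about \emph{why} it goes through. Rather than producing $\bar f$ first and then arguing that a Galois-invariant representative exists in its coset, the paper observes that one can choose the extension of $\alpha$ to act as $\pm\operatorname{Id}$ on the transcendental lattice $T$; any such extension commutes with the Galois action on $H^2(\overline{X})$, so the resulting Hodge isometry is Galois-invariant and the (strong) Torelli theorem then yields a Galois-invariant $\bar f$ directly. Your version, extending by the identity on $T$, handles the kernel $K$ used in the ``in particular'', but for the conclusion of Theorem~\ref{corollary-torelli2} one also needs the case where $\alpha$ acts as $-\operatorname{Id}$ on the discriminant, which is why the paper mentions the $-\operatorname{Id}$ extension. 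This is a minor addendum rather than a gap, and your deferral to Kawamata covers it.
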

\begin{proof}
The proof is the same as \cite[Remark 2.2]{Kawamata1997} except that we might need to extend $\alpha \in O(\Pic(X))$ by $-\operatorname{Id}$ on $T$. Any such extension still commutes with the action of the Galois group, and so it comes from an automorphism of $X$. 

For the final statement, if there are no $(-2)$-classes in $\Pic(X)$ then the set of nodal classes is empty. Since $\Pic(X)^*/\Pic(X)$ is a finite group, so is its automorphism group. So $\Aut^*(X)$ contains the finite-index subgroup $\operatorname{ker} \left( O(\Pic(X)) \arrow \operatorname{Aut}(\Pic(X)^*/\Pic(X)) \right)$. 
\end{proof}

\subsection{Flops}
In our examples, the SQMs we need to verify the conditions of Theorem \ref{theorem-lifting} will arise as flops of our original varieties. In this section we collect the facts we need about flops. We start with some terminology.
\begin{definition}
Let $\varphi: X \dashrightarrow X'$ be an SQM. Let $U$ and $U'$ be the maximal open sets in $X$ and $X'$ respectively on which $\varphi$ and $\varphi^{-1}$ are isomorphisms. Then we call $X \setminus U$ the \emph{centre} of the SQM $\varphi$, and $X' \setminus U'$ the \emph{concentre}.
\end{definition}

\begin{proposition}
Let $\varphi: X \dashrightarrow X'$ be an SQM. Let $\varphi^*: N^1(X) \iso N^1(X')$ be the isomorphism given by proper transform of divisors, and let $\varphi_*: N_1(X) \iso N_1(X')$ be the dual isomorphism. Then if $C$ is a curve on $X$ which is disjoint from the centre of $\varphi$, we have $\varphi_*[C]=[\varphi(C)]$ in $N_1(X')$.
\end{proposition}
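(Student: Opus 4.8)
The plan is to verify the identity $\varphi_*[C] = [\varphi(C)]$ by testing both classes against every divisor class, exploiting the fact that the intersection pairing $N^1(X') \times N_1(X') \to \RR$ is perfect. By construction $\varphi_*$ is the transpose of $\varphi^*$ with respect to the two intersection pairings, so it is characterized by $\varphi^*(D) \cdot \varphi_*(\gamma) = D \cdot \gamma$ for all $D \in N^1(X)$ and $\gamma \in N_1(X)$. Since $\varphi^* \colon N^1(X) \to N^1(X')$ is an isomorphism, every class in $N^1(X')$ is of the form $\varphi^*(D)$, and so it suffices to prove $\varphi^*(D) \cdot [\varphi(C)] = D \cdot [C]$ for every $D$; comparing with the defining property of $\varphi_*$ then forces $\varphi_*[C] = [\varphi(C)]$. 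By linearity and continuity I may assume $D$ is the class of a genuine Cartier divisor.

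First I would record the codimension bookkeeping. Write $\psi := \varphi|_U \colon U \xrightarrow{\sim} U'$ for the isomorphism on the loci of definition. The hypothesis that $C$ is disjoint from the centre $X \setminus U$ gives $C \subseteq U$, hence $\varphi(C) = \psi(C) \subseteq U'$ is a genuine curve, disjoint from the concentre $X' \setminus U'$. Because $\varphi$ is an isomorphism in codimension one, both $X \setminus U$ and $X' \setminus U'$ have codimension at least $2$; in particular the proper transform $\varphi^*(D)$ has no irreducible component contained in the concentre, and therefore its restriction to $U'$ is exactly the transport $\psi_*(D|_U)$, i.e. $\psi^*\bigl((\varphi^*D)|_{U'}\bigr) = D|_U$.

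The key computation is then purely local along the curve. The intersection number of a line bundle with a proper curve is the degree of its restriction to that curve, so it depends only on the line bundle in an open neighbourhood of the curve. Since $\varphi(C) \subseteq U'$, I may compute inside $U'$:
$$ \varphi^*(D) \cdot [\varphi(C)] = \deg\bigl( (\varphi^*D)|_{\varphi(C)} \bigr) = \deg\bigl( \psi^*(\varphi^*D)\big|_{C} \bigr) = \deg\bigl( D|_C \bigr) = D \cdot [C], $$
where the middle equalities use that $\psi$ is an isomorphism carrying $C$ onto $\varphi(C)$ and transporting $(\varphi^*D)|_{U'}$ to $D|_U$. This is precisely the identity required in the first paragraph, completing the argument.

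The step most in need of care is the claim that $(\varphi^*D)|_{U'} = \psi_*(D|_U)$ together with the reduction of the intersection number to a degree read off on $U'$: this is where one genuinely uses that the centre and concentre have codimension at least $2$, so that proper transform neither loses nor creates divisorial components over the indeterminacy loci. A minor additional point is that $X'$ may be singular (albeit $\QQ$-factorial), so intersection numbers are a priori only $\QQ$-valued; the degree-on-a-curve interpretation should therefore be applied to a Cartier multiple of $D$ before dividing back, which the opening reduction to Cartier $D$ handles.
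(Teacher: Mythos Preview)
Your proof is correct and follows the same underlying idea as the paper. The paper's proof is a single sentence: ``Since $C$ is disjoint from the centre of $\varphi$, there is an open set $U$ containing $C$ on which $\varphi$ is an isomorphism.'' You have simply unpacked what this sentence is meant to convey --- the duality characterization of $\varphi_*$, the computation of intersection numbers as degrees of restricted line bundles, and the verification that proper transform behaves as transport under $\psi$ on the open loci. One very minor terminological point: what you call the ``transpose'' of $\varphi^*$ is really its inverse transpose (the transpose would go $N_1(X') \to N_1(X)$), but your characterizing formula $\varphi^*(D)\cdot\varphi_*(\gamma)=D\cdot\gamma$ is the correct one, so the argument stands.
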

\begin{proof}
Since $C$ is disjoint from the centre of $\varphi$, there is an open set $U$ containing $C$ on which $\varphi$ is an isomorphism.
\end{proof}

For the next statement, let $N_1(X)$ denote the vector space generated by $1$-cycles on $X$ modulo numerical equivalence. Then $\overline{\Curv(X)}$ denotes the {\em cone of curves} of $X$, meaning the closed cone in $X$ generated by classes of effective 1-cycles (equivalently, irreducible curves). (By definition of nefness, $\overline{\Curv(X)}$ is dual to the nef cone $\nef{X}$.) 
\begin{proposition}
Let $(X,\Delta)$ be a klt Calabi--Yau pair. Suppose that $R \subset \overline{\Curv(X)}$ is an extremal ray, and suppose there is a movable effective divisor $D$ on $X$ such that $D \cdot R <0$. Then the log flip of $R$ exists. In particular if $R \subset K_X^\perp$, then the flop of $R$ exists. 
\end{proposition}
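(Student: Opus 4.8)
The plan is to perturb the boundary by the given movable divisor so that the $(K_X+\Delta)$-trivial ray $R$ becomes strictly negative, thereby reducing the existence of the flip to the existence of log flips for klt pairs, which is guaranteed by \cite{BCHM}. Concretely, since $(X,\Delta)$ is a klt Calabi--Yau pair we have $K_X+\Delta\equiv 0$, so $(K_X+\Delta)\cdot R=0$. Because $X$ is $\QQ$-factorial, $D$ is $\QQ$-Cartier, and since being klt is an open condition the pair $(X,\Delta+\epsilon D)$ remains klt for all sufficiently small rational $\epsilon>0$. For such $\epsilon$ we compute $(K_X+\Delta+\epsilon D)\cdot R=\epsilon\,(D\cdot R)<0$, so $R$ is a $(K_X+\Delta+\epsilon D)$-negative ray which is extremal in $\overline{\Curv(X)}$. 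By the Cone Theorem for klt pairs \cite[Theorem 3.7]{KollarMori}, being extremal in the full cone and negative for $K_X+\Delta+\epsilon D$, the ray $R$ is contractible: it defines an extremal contraction $\mathrm{cont}_R\colon X\to Y$ of relative Picard number one.

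First I would argue that this contraction is \emph{small}, which is the step where movability of $D$ is essential and which I expect to be the crux. Let $C$ be any irreducible curve contracted by $\mathrm{cont}_R$, so that $[C]\in R$ and hence $D\cdot C<0$. I claim $C\subseteq \operatorname{Bs}|D|$: otherwise a general member $D'\in|D|$ would not contain $C$, giving $D\cdot C=\deg(D'|_C)\geq 0$, a contradiction. Therefore the exceptional locus of $\mathrm{cont}_R$, which is swept out by such curves, is contained in $\operatorname{Bs}|D|$. Since $D$ is movable its base locus has codimension at least two, so the exceptional locus has codimension at least two. In particular $\mathrm{cont}_R$ contracts no divisor and is not of fiber type (a fiber-type contraction would force $\operatorname{Bs}|D|=X$), so it is a flipping contraction for the klt pair $(X,\Delta+\epsilon D)$.

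It then remains to invoke the existence of flips. By \cite{BCHM}, the $(K_X+\Delta+\epsilon D)$-flip of the flipping contraction $\mathrm{cont}_R$ exists; this is precisely the log flip of $R$. For the final assertion, suppose in addition that $R\subset K_X^\perp$, i.e.\ $K_X\cdot R=0$. Since an SQM is an isomorphism in codimension one it preserves numerical triviality of $K_X+\Delta$, and the contraction $\mathrm{cont}_R$ is $K_X$-trivial by hypothesis; hence the log flip just constructed is $K_X$-crepant over $Y$, which is by definition a flop. The only genuine input beyond the perturbation trick is the smallness argument of the second paragraph, so that is where I would concentrate the verification.
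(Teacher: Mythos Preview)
Your proof is correct and follows essentially the same route as the paper: perturb by $\epsilon D$ to make $R$ a $(K_X+\Delta+\epsilon D)$-negative extremal ray, contract via the Cone Theorem, use movability of $D$ to force the contraction to be small, and invoke \cite{BCHM}. You supply a bit more detail than the paper (the explicit justification that $C\subseteq\operatorname{Bs}|D|$ and the crepancy argument for the flop case), but the argument is the same.
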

\begin{proof}
Since $(X,\Delta)$ is klt and $D$ is effective, for any sufficiently small $\epsilon >0$ the pair $(X,\Delta+\epsilon D)$ is also klt. Moreover since $K_X + \Delta \equiv 0$ we also have $(K_X + \Delta + \epsilon D) \cdot R = D \cdot R <0$. So $R$ is a $(K_X+\Delta+\epsilon D)$-negative extremal ray. By the Cone Theorem \cite[Theorem 3.7]{KollarMori}, the contraction $f:X \arrow Z$ of $R$ exists. Any curve with class in $R$ must lie in $\operatorname{Bs}(D)$, which has codimension at least 2 since $D$ is movable, so the contraction $f$ is small. Hence its log flip exists by Birkar--Cascini--Hacon--McKernan \cite[Corollary 1.4.1]{BCHM}.
\end{proof}

\subsection{Flips}
In this subsection we show that for the first of our examples  in Section \ref{sect-infinite}, flipping contractions do not occur. As a consequence, the decomposition of the movable cone into nef cones is compatible with restriction to the generic fiber. This will allow us to prove the conjecture in this case with no need for explicit analysis of the geometry.

The main input is the following important result of Kawamata \cite{Kaw2}.
\begin{theorem}[Kawamata]
Let $(X,\Delta)$ be a klt pair, and $f: X \rightarrow Z$ be a birational $(K_X+\Delta)$-negative extremal contraction. Let $E$ denote the exceptional locus of $f$ and define 
$$n := \operatorname{dim} E - \operatorname{dim} f(E).$$
Then $E$ is covered by rational curves $C$ such that $f(C)$ is a point and $-(K_X+\Delta) \cdot C < 2n$.
\end{theorem}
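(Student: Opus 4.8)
The plan is to reprove this via Mori's bend-and-break method, by producing rational curves \emph{inside the fibers} of $f$ and bounding their degree, as adapted to the klt setting. First I would localize the statement to a general fiber. Let $E_0\subseteq E$ be an irreducible component dominating a component of $f(E)$ and realizing the relative dimension $n$, and for a general point $z\in f(E_0)$ let $F=f^{-1}(z)$, an $n$-dimensional fiber contracted to a point. It suffices to cover a general such $F$ by rational curves $C$ with $-(K_X+\Delta)\cdot C<2n$: letting $z$ and the component vary then sweeps out a dense constructible subset of $E$, and taking closures covers every component. (One may even reduce to the case $\dim f(E)=0$ by restricting over a general linear section of the base, since pulling back such sections keeps the pair klt by adjunction and leaves the contracted curves unchanged.)

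Next I would produce the curves and confine their deformations to $F$. Because $R=\RR_{\geq0}[C]$ is $(K_X+\Delta)$-negative and $F$ is contracted, $K_X+\Delta$ is negative on curves sweeping $F$, so Mori's existence theorem---reduction to characteristic $p$, iterated Frobenius pullback, and bend-and-break---yields rational curves through a general point $x\in F$ that are contracted by $f$. The key point is that deformations preserve the numerical class, hence have intersection zero with $f^\ast A$ for $A$ ample on $Z$ and remain contracted; once the image of one point of $\PP^1$ is pinned at $x$, the whole deformation family is therefore confined to the single fiber $F$.

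Then I would bound the degree. Choose $C$ of minimal $(K_X+\Delta)$-degree among contracted rational curves through $x$, with normalization $\nu:\PP^1\to C$. The Euler-characteristic estimate $\chi(\PP^1,\nu^\ast T_X)=-K_X\cdot C+\dim X$ gives deformations of the map of dimension at least $-K_X\cdot C$ after fixing the image of one point, all living in the $n$-dimensional fiber $F$; when $-K_X\cdot C$ is too large relative to $n$, bend-and-break degenerates $C$ into a connected union of rational curves of strictly smaller degree, contradicting minimality. Bookkeeping the contribution of $\Delta$ (so that $-(K_X+\Delta)\cdot C$, not merely $-K_X\cdot C$, is controlled) and of the singularities of $X$ along $C$ converts this dichotomy into the stated bound $-(K_X+\Delta)\cdot C<2n$.

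I expect the main obstacle to be precisely this final estimate in the singular, klt setting. On a smooth variety the same ideas yield a sharper bound of the shape $n+1$; the passage to a $\QQ$-factorial klt pair weakens the tangent-sheaf and Riemann--Roch computations---$\chi(\PP^1,\nu^\ast T_X)$ is no longer governed by $-K_X\cdot C+\dim X$ when $C$ meets $\operatorname{Sing}(X)$, and curves lying in $\operatorname{Supp}(\Delta)$ must be treated separately---and this loss is what accounts for the factor of $2$. Producing the curves in characteristic $p$ and descending the bound to characteristic zero, together with arranging that the curves obtained cover \emph{every} component of $E$ (including fibers of dimension exceeding $n$) under a single uniform bound, is the technical heart of Kawamata's argument.
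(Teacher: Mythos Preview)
The paper does not prove this theorem: it is quoted as an external input, with the citation \cite{Kaw2} (Kawamata, \emph{On the length of an extremal rational curve}, Invent.\ Math.\ 105 (1991), 609--611), and then used to derive Corollary~\ref{cor-length} and Theorem~\ref{theorem-noflips}. So there is no ``paper's own proof'' to compare against.

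As a sketch of Kawamata's actual argument, your outline is in the right spirit---bend-and-break confined to a fiber of the contraction is indeed the mechanism---but the crucial step is not quite as you describe it. You try to run the deformation estimate $\chi(\PP^1,\nu^*T_X)=-K_X\cdot C+\dim X$ directly on the singular $X$ and then ``bookkeep'' the contribution of $\Delta$ and of $\operatorname{Sing}(X)$; this is exactly where the argument breaks down, since $T_X$ is not locally free and the Euler-characteristic count is unavailable. Kawamata's actual route is to pass to a log resolution $\mu:Y\to X$, write $K_Y=\mu^*(K_X+\Delta)+\sum a_jE_j$ with $a_j>-1$ (the klt condition), and apply the smooth bend-and-break/Ionescu--Wi\'sniewski bound on $Y$ to curves in the preimage of the fiber. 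Pushing the resulting rational curves down to $X$ and using $a_j>-1$ is what produces the factor $2$ in $2n$; the ``loss'' is not a vague weakening of Riemann--Roch but the precise inequality $-(K_X+\Delta)\cdot \mu_*C = -K_Y\cdot C + \sum a_j(E_j\cdot C)$ combined with the lower bound on the discrepancies. Your final paragraph correctly flags this as the main obstacle, but the resolution step is the missing idea rather than merely a technicality.
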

This immediately gives
\begin{corollary} \label{cor-length}
Let $X$ be a klt variety such that $-K_X=kH$ for a line bundle $H$ and an integer $k$. If $f: X \arrow Z$ is a $K_X$-negative extremal contraction with exceptional locus $E$ then $n=\operatorname{dim} E - \operatorname{dim} f(E)>\frac{k}{2}$. 
\end{corollary}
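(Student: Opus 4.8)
The plan is to apply Kawamata's theorem (stated just above) directly, with $\Delta = 0$. Since $X$ is klt by hypothesis, the pair $(X,0)$ is klt, and $f\colon X \arrow Z$ is a $(K_X+0)$-negative extremal contraction. Writing $E$ for its exceptional locus and $n = \dim E - \dim f(E)$, the theorem produces rational curves $C$ covering $E$ such that $f(C)$ is a point and $-K_X \cdot C < 2n$.

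Next I would record two facts about such a curve $C$. First, because $f(C)$ is a point, $C$ is contracted by $f$; as $f$ is a $K_X$-negative extremal contraction, every contracted curve lies in the contracted ray $R$ with $K_X \cdot R < 0$, so $-K_X \cdot C > 0$. Second, substituting $-K_X = kH$ gives $-K_X \cdot C = k\,(H \cdot C)$, and since $H$ is an honest line bundle the intersection number $H \cdot C$ is an integer. Hence $-K_X \cdot C$ is a \emph{positive integer that is an integer multiple of $k$}.

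Combining these, $-K_X \cdot C$ is a positive multiple of $k$, so $-K_X \cdot C \geq k$ (using $k>0$, which holds in our setting where $k = n-2$). Together with the Kawamata bound $-K_X \cdot C < 2n$ this yields $k \leq -K_X \cdot C < 2n$, that is $n > \frac{k}{2}$, as claimed.

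I do not expect a serious obstacle here: the entire content is carried by Kawamata's theorem, and what remains is the elementary observation that $-K_X \cdot C$ is a positive integer divisible by $k$. The only points requiring a moment's care are the positivity $-K_X \cdot C > 0$, which must be extracted from the fact that the covering curves $C$ lie in the $K_X$-negative contracted ray, and the integrality $H \cdot C \in \bbZ$, which uses that $H$ is a genuine line bundle rather than merely a $\bbQ$-divisor. The implicit hypothesis $k>0$ is valid in all the cases we consider.
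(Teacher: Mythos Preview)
Your proof is correct and is exactly the immediate deduction the paper has in mind (the paper gives no proof beyond ``This immediately gives''): apply Kawamata's length bound with $\Delta=0$ to obtain a contracted rational curve $C$ with $0<-K_X\cdot C<2n$, then use $-K_X\cdot C=k(H\cdot C)\in k\bbZ$ to force $-K_X\cdot C\geq k$. Your care in noting the implicit positivity $k>0$ and the integrality of $H\cdot C$ is appropriate, and these are the only points needing attention.
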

As the dimension of $X$ grows, this bound becomes less useful, but it will give us exactly the information we need in dimension 4:
\begin{theorem} \label{theorem-noflips}
Let $X$ be a variety satisfying the following hypotheses:
\begin{enumerate}
\item $\operatorname{dim} X = 4$;
\item there exists a line bundle $H$ such that $-K_X=2H$;
\item $X$ is obtained from a smooth variety $Y$ by a sequence of flops.
\end{enumerate}
Then $X$ has no flipping contractions.
\begin{proof}
First assume $X$ itself is smooth. Suppose $f:X \arrow Z$ is a contraction of a $K$-negative extremal ray $R$. By the Cone Theorem, $R$ is minimally spanned by the class of a rational curve $C$. Standard estimates from deformation theory \cite[Theorem II.1.14]{Kollar} show that the space of deformations of $C$ inside $X$ is at least $-K_X \cdot C + \operatorname{dim} X - 3 \geq 2 +4-3=3$. If those deformations swept out a locus of dimension at most 2 in $X$, then by Bend and Break \cite[Corollary II.5.6]{Kollar} some deformation of $C$ must be reducible, contradicting the fact that $R$ is extremal and the class of $C$ is minimal. So deformations of $C$ must sweep out a locus of dimension at least 3, and therefore $f$ is not a small contraction.

In general $X$ will be mildly singular, but there is a sequence of flops $Y \dashrightarrow Y_1 \dashrightarrow \cdots \dashrightarrow Y_n =X$. Suppose $f:X \arrow Z$ is a $K$-negative extremal contraction, and let $C$ be a general curve with class minimally spanning the contracted extremal ray $R$. 

The first possibility is that (the proper transform of) $C$ is not contained in the cocenter of any of the flops $Y_{i-1} \dashrightarrow Y_i$. Then it has a proper transform $C'$ on $Y$, and the generic deformation of $C'$ gives a deformation of $C$. By the previous argument, this deformation space has dimension 3, and so by Bend-and-Break (on $X$) the locus of deformations must have dimension 3, hence $f$ is not small.

The second possibility is that (the proper transform of) $C$ is contained in the cocenter of some flop, say $Y_{i-1} \dashrightarrow Y_i$. According to Corollary \ref{cor-length}, the exceptional locus $E$ of the contraction $f:X \arrow Z$  must satisfy  $n=\operatorname{dim} E - \operatorname{dim} f(E)>\frac{2}{2}=1$. So if we assume that $f$  is a small contraction, we must have $\operatorname{dim} f(E)=0$. That is, all curves $D$ in $E$ are numerically proportional to $C$ and satisfy $K_X \cdot D <0$. On the other hand, since $C$ is contained in the cocenter of the flop  $Y_{i-1} \dashrightarrow Y_i$, the proper transform of $E$  on $Y_i$ must be an irreducible component $E_i$ of the cocenter of the flop. Choosing a general flopping (i.e $K$-trivial) curve inside $E_i$, its proper transform under the map $Y_i \dashrightarrow X$ is then a curve in $E$, hence is $K$-negative. But $Y_i \dashrightarrow X$ is a sequence of flops, and flops preserve the property of $K$-triviality. This contradicts the assumption that $f$ is small. 
\end{proof}
\end{theorem}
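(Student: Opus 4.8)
The plan is to show that a flipping contraction on $X$ would force its exceptional locus to have dimension at least $3$, which is impossible since a flipping contraction is small and hence (in a four-fold) has exceptional locus of dimension at most $2$. The engine is the interplay between a lower bound on the dimension of the space of deformations of a minimal rational curve spanning the contracted ray and the Bend-and-Break principle, supplemented by the length estimate of Corollary \ref{cor-length} to handle curves that are trapped inside the flopping loci. Note that since $X$ is reached from the smooth $Y$ by flops it is terminal and $\QQ$-factorial, hence klt, so Corollary \ref{cor-length} applies with $k=2$.

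First I would treat the case that $X$ is itself smooth. Let $f\colon X\to Z$ contract a $K_X$-negative extremal ray $R$, minimally spanned by the class of a rational curve $C$. Because $-K_X=2H$ and $-K_X\cdot C>0$, integrality of $H\cdot C$ forces $-K_X\cdot C\geq 2$, so the standard deformation-theoretic bound gives a family of deformations of $C$ inside $X$ of dimension at least $-K_X\cdot C+\dim X-3\geq 2+4-3=3$. If these deformations swept out a locus of dimension at most $2$, then fixing two general points would leave a positive-dimensional subfamily through them, and Bend-and-Break would degenerate $C$ into a reducible or non-reduced cycle; each component would then lie in $R$ with strictly smaller anticanonical degree, contradicting minimality of $[C]$. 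Hence the deformations sweep out a locus of dimension at least $3$ inside the exceptional locus $E$, so $\dim E\geq 3$ and $f$ is not small.

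For the general case I would exploit the sequence of flops $Y\dashrightarrow Y_1\dashrightarrow\cdots\dashrightarrow Y_n=X$ and choose $C$ to be a \emph{general} minimal rational curve in $R$. The argument splits according to whether $C$ meets the flopping loci. If the proper transform of $C$ avoids the cocenter of every flop, then $C$ lifts to a curve $C'$ on the smooth $Y$ with $-K_Y\cdot C'=-K_X\cdot C\geq 2$, since flops preserve the canonical class and are isomorphisms away from their loci. The smooth argument produces a $3$-dimensional deformation family of $C'$, whose generic member again avoids the flopping loci and so descends to a genuine deformation of $C$ on $X$; by Bend-and-Break on $X$ the locus swept out has dimension $3$, forcing $\dim E\geq 3$ and ruling out smallness.

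The main obstacle is the remaining case, where $C$ is trapped inside the cocenter of some flop $Y_{i-1}\dashrightarrow Y_i$, so that deformation theory on the smooth model $Y$ is unavailable. Here the dimension bound must instead come from the length inequality: Corollary \ref{cor-length} with $k=2$ gives $n=\dim E-\dim f(E)>1$, so if $f$ were small, forcing $\dim E\leq 2$, we would be forced to $\dim f(E)=0$, meaning every curve in $E$ is numerically proportional to $C$ and $K_X$-negative. On the other hand the proper transform of $E$ on $Y_i$ is a component of the flop's cocenter, which contains a $K$-trivial flopping curve; transporting this curve back through the remaining flops keeps it $K$-trivial, yet it lands in $E$ and must therefore be $K$-negative, a contradiction. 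I expect this trapped case to be the delicate point, precisely because one cannot lift to the smooth model and the numerical contradiction has to be extracted from the length bound together with the fact that flops preserve $K$-triviality; the smooth case and the untrapped case are comparatively routine once the estimate $-K_X\cdot C\geq 2$ is in hand.
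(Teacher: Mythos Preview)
Your proposal is correct and follows essentially the same approach as the paper: the same three-case split (smooth; $C$ not trapped in any cocenter; $C$ trapped in a cocenter), the same deformation-plus-Bend-and-Break argument in the first two cases, and the same use of the length bound together with $K$-triviality of flopping curves in the trapped case. Your added remarks---that $X$ is terminal and klt since it is obtained from a smooth variety by flops, and that integrality of $H\cdot C$ forces $-K_X\cdot C\geq 2$---make explicit points the paper leaves implicit, but the substance of the argument is the same.
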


\section{Finite cases}\label{sect-finite}
In the remaining sections, we will verify that the movable cone conjecture is true for a number of examples of Fano manifolds of index $n-2$. In this section, we focus on examples where the movable cone is a rational polyhedral cone. We will analyze $\PP^1 \times \PP^3$, the blowup of the cone over a quadric threefold at the vertex, and $\PP^1 \times V$, where $V$ is a del Pezzo threefold of Picard number 1.

\subsection{$\PP^1 \times \PP^3$}
The Chow ring of $\PP^1 \times \PP^3$ is isomorphic to $\ZZ[L_1, L_2]/ (L_1^2, L_2^4)$, where $L_1$ and $L_2$ are the pullbacks of the hyperplane classes from $\PP^1$ and $\PP^3$, respectively. Let $H= \OO_{\PP^1 \times \PP^3}(1,2)$ and let $V \subset |H|$ be a general $2$-dimensional linear system. The base locus of $V$ is a smooth curve $C$ of genus 17 with class $12 L_1 L_2^2 + 8 L_2^3$. Blowing up $C$, we obtain a variety $X$ with a fibration to $\PP^2$, $f: X \rightarrow \PP^2$, whose generic fiber is a $K3$ surface $X_{\eta}$. The Picard rank of $X_{\eta}$ is two and $\lambda_1$, $\lambda_2$, the restrictions of $L_1$ and $L_2$ can be taken as generators.

\begin{lemma}
In the basis $\lambda_1, \lambda_2$, the intersection matrix on $\Pic(X_{\eta})$ is
\begin{align*}
\begin{pmatrix}
0 & 4 \\
4 & 4
\end{pmatrix}
\end{align*}
There are no $(-2)$-curves on $X_{\eta}$.
\end{lemma}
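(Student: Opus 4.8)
The plan is to realize the generic fiber $X_\eta$ as a complete intersection inside $Z=\PP^1\times\PP^3$ and to compute the intersection form by pushing everything forward to $Z$. Concretely, $X_\eta$ is isomorphic to the surface $S=H_1\cap H_2$ cut out by two general members $H_1,H_2\in V\subset|\OO_Z(1,2)|$, and $\lambda_i=L_i|_S$. Working in the Chow ring $\ZZ[L_1,L_2]/(L_1^2,L_2^4)$, I would first record the fundamental class $[S]=(L_1+2L_2)^2=4L_1L_2+4L_2^2$ (using $L_1^2=0$), together with the normalization that $L_1L_2^3$ is the class of a point on $Z$.

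With these in hand, each intersection number is computed as $\lambda_i\cdot\lambda_j=\deg_Z(L_iL_j\cdot[S])$: the relations $L_1^2=0$ and $L_2^4=0$ annihilate every monomial except $L_1L_2^3$, giving $\lambda_1^2=0$, $\lambda_1\cdot\lambda_2=4$ and $\lambda_2^2=4$, which is the asserted matrix. This step is a routine calculation once the identification of $\lambda_1,\lambda_2$ as a basis of $\Pic(X_\eta)$, recorded just before the lemma, is granted.

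For the second assertion I would argue purely lattice-theoretically. Any class is $x=a\lambda_1+b\lambda_2$ with $a,b\in\ZZ$, and the matrix gives $x^2=8ab+4b^2=4(2ab+b^2)$. Hence every self-intersection number on $\Pic(X_\eta)$ is divisible by $4$; since $-2$ is not, there is no $(-2)$-class whatsoever, and in particular no nodal class. Thus $X_\eta$ carries no $(-2)$-curve. (Equivalently, the set of nodal classes is empty, which is precisely the hypothesis feeding into the Torelli-type Corollary \ref{corollary-torellispecial} in the later analysis of $\Aut(X_\eta)$.)

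The only genuinely nontrivial input is the claim, taken from the discussion preceding the lemma, that $\lambda_1$ and $\lambda_2$ generate the full geometric Picard lattice of $X_\eta$ and not merely a finite-index sublattice. This is where the generality of the linear system $V$ is essential --- it is a Noether--Lefschetz-type statement --- and it is the only place the argument is not a formal computation. If the restrictions spanned only a proper sublattice, the divisibility obstruction could fail and spurious $(-2)$-classes might appear; so establishing this generation is the real content, and once it is in hand both assertions of the lemma follow immediately from the computations above.
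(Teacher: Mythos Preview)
Your proof is correct and essentially identical to the paper's: both compute $\lambda_i\cdot\lambda_j$ by intersecting with $(L_1+2L_2)^2$ in the Chow ring of $Z$, and both rule out $(-2)$-classes via the divisibility $x^2\in 4\ZZ$.

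One small correction to your final paragraph: the fact that $\lambda_1,\lambda_2$ generate $\Pic(X_\eta)$ does \emph{not} require a Noether--Lefschetz argument here. Since $X_\eta$ is the generic fiber (over the function field of $\PP^2$, not its algebraic closure), Proposition~\ref{prop-restriction} already gives the isomorphism $\Pic(X/\PP^2)\cong\Pic(X_\eta)$ from the irreducibility of the fibers; so $\lambda_1,\lambda_2$ form an honest basis of the full lattice. The Noether--Lefschetz input (via Lemma~\ref{prop-steinitz} and Corollary~\ref{corollary-nl}) is only invoked later in the paper, in the \emph{infinite} cases, where one must control the \emph{geometric} generic fiber to apply Torelli. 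For $\PP^1\times\PP^3$ the movable cone is computed directly without Torelli, so the distinction never arises.
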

\begin{proof}
We calculate the intersection matrix as follows
$$\lambda_1^2 = L_1^2 (L_1 + 2L_2)^2 = 0, \quad \lambda_1\lambda_2 = L_1 L_2 (L_1 + 2L_2)^2 = 4, \quad \lambda_2^2 = L_2^2 (L_1 + 2L_2)^2 = 4.$$
If $a\lambda_1 + b\lambda_2$ is a class, then its self-intersection is $4 (2 ab + b^2)$. Since this number is always divisible by $4$, it cannot represent $-2$.
\end{proof}

\begin{theorem}
$$\Mov{\Bl_C \PP^1 \times \PP^3}= \langle L_1, L_2, L_1 + 2L_2 -F, 4L_2 - F\rangle.$$ In particular, the cone conjecture holds for $\Bl_C \PP^1 \times \PP^3$.
\end{theorem}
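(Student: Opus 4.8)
The plan is to compute $\Mov{X}$ by combining the relative computation over $\PP^2$ (via the Lifting Theorem \ref{theorem-lifting}) with explicit divisors that cut out the claimed cone. First I would identify the relative movable cone. By Proposition \ref{prop-restriction}(iii), $\Mov{X/\PP^2} \iso \Nef{X_\eta}$, and by Proposition \ref{prop-auts}, $\PsAut(X/\PP^2) \iso \Aut(X_\eta)$. Since the preceding lemma shows $X_\eta$ has \emph{no} $(-2)$-classes, the nef cone of $X_\eta$ equals its positive cone, with boundary given by the two isotropic rays of the quadratic form with matrix $\begin{pmatrix} 0 & 4 \\ 4 & 4 \end{pmatrix}$. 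Solving $4(2ab+b^2)=0$ gives the rays spanned by $\lambda_1$ (where $b=0$) and by $2\lambda_1 - \lambda_2$ (where $2a+b=0$), so $\Nef{X_\eta} = \langle \lambda_2, 2\lambda_1 - \lambda_2 \rangle$. Because there are no nodal classes, $\Aut^*(X_\eta)$ acts trivially on the rational part of the cone in the relevant sense, so $\Mov{X/\PP^2}$ itself is rational polyhedral and no infinite group action is needed here; the covering domain condition (d) of Theorem \ref{theorem-lifting} will be vacuous beyond matching finitely many nef cones.

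Next I would lift to the absolute cone. The quotient map $q: N^1(X) \arrow N^1(X/\PP^2)$ kills the class $L_1 + 2L_2 - F = f^*\OO_{\PP^2}(1)$ and sends $L_1 \mapsto \lambda_1$, $L_2 \mapsto \lambda_2$. The two boundary rays of the relative cone lift to codimension-one faces of the absolute movable cone, and I expect the four generators of the claimed cone to arise as follows: $L_1$ and $L_2$ are pullbacks of nef (basepoint-free) divisors from $Z = \PP^1 \times \PP^3$, hence movable on $X$; the class $L_1 + 2L_2 - F$ is the class defining the $K3$-fibration $f$, hence basepoint-free and movable; and $4L_2 - F$ is the new ``interesting'' generator coming from the boundary ray $2\lambda_1 - \lambda_2$ of the fiber nef cone, whose preimage under $q$ meets $\Mov{X}$ in this class. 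To justify that $4L_2 - F$ is movable and extremal, I would exhibit the flop realizing the corresponding ray: by the flop existence result (Proposition in \S\ref{sect-generalities}, applied to a $K$-trivial extremal ray $R \subset K_X^\perp$), there is an SQM $X \dashrightarrow X'$ whose nef cone is the adjacent chamber, and the wall between them is spanned by $4L_2 - F$ together with the fibration class.

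The containment $\Mov{X} \subseteq \langle L_1, L_2, L_1 + 2L_2 - F, 4L_2 - F \rangle$ I would prove by producing dual curve classes: each facet of the claimed cone should be supported by an effective curve $C_i$ with $C_i \cdot D \geq 0$ for all movable $D$, and one checks that any class strictly outside the cone pairs negatively with $-K_X$ or with one of these curves, hence is not even pseudo-effective in the movable sense. For the reverse containment I use the four explicit generators just described, all shown to be movable. To assemble the full conjecture via Theorem \ref{theorem-lifting}, I would verify hypotheses (a)--(d) for the finite collection $\{X, X'\}$ of SQMs realizing the two chambers: each $\Nef{X_i}$ is rational polyhedral (checked by the same dual-curve method), $-K_X = 2H$ is nef on each $X_i$ (it is the pullback of an ample class, preserved under flops of $K$-trivial rays), the relevant codimension-one faces are dual to $K$-trivial curves (the flopping curves), and the union $q(\Nef{X} \cup \Nef{X'})$ covers the relative fundamental domain $V$ (here just the single relative nef cone, since $\Aut(X_\eta)$ contributes nothing). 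The main obstacle I anticipate is the extremality and movability of $4L_2 - F$: one must show both that this class lies on the boundary of $\Mov{X}$ (not its interior) and that the flop across the corresponding wall genuinely exists and terminates --- equivalently, that the contraction of $R$ is small and its flip exists, which is where Theorem \ref{theorem-noflips} (no flipping contractions in this four-dimensional, index-$2$ setting) becomes essential to guarantee that every wall-crossing is a flop rather than a divisorial contraction or a flip.
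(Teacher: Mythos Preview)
Your approach is quite different from the paper's. The paper gives a short direct argument: bound $\Mov{X}$ from outside using curves that sweep out divisors --- the elliptic curves of classes $\lambda_1$ and $2\lambda_2-\lambda_1$ on the $K3$ fibers give $b\ge 2c$ and $2a+b\ge 4c$ for any movable $aL_1+bL_2-cF$, and the lines $l_1,l_2,f$ give $a,b,c\ge 0$. An elementary case split ($a\ge c$ versus $a<c$) then shows any such class lies in the stated cone, once one knows $4L_2-F$ is movable and effective. That last point is handled geometrically: the projection of $C$ to $\PP^3$ is a smooth curve cut out by quartics (namely the images of the $K3$ surfaces through $C$), so $|4L_2-F|$ is at least two-dimensional with base locus the codimension-two set $\pi_2^{-1}(\pi_2(C))$. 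No flops, no Lifting Theorem.

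Your structural route has genuine gaps. First, a computational slip: the isotropic rays of $4b(2a+b)$ are spanned by $\lambda_1$ and $2\lambda_2-\lambda_1$, so $\Nef{X_\eta}=\langle\lambda_1,\,2\lambda_2-\lambda_1\rangle$, not $\langle\lambda_2,\,2\lambda_1-\lambda_2\rangle$. Second, ``no nodal classes'' does \emph{not} force $\Aut^*(X_\eta)$ to be finite --- the paper's double cover of $\PP^2\times\PP^2$ also has no $(-2)$-classes yet infinite automorphism group; what makes the present case finite is that the quadratic form is isotropic over $\QQ$, so $O(\Pic(X_\eta))$ itself is finite. Third, and most importantly, you correctly flag movability and effectivity of $4L_2-F$ as the crux but do not establish it. The flop-existence proposition you invoke requires a movable effective divisor negative on the extremal ray, which is precisely what is in question; and even if you produce the flop $X\dashrightarrow X'$ via BCHM, you must still show that $4L_2-F$ is nef (indeed semi-ample, to get effectivity) on $X'$ and that hypotheses (a)--(c) of Theorem~\ref{theorem-lifting} hold there --- in particular that $\Nef{X'}$ is rational polyhedral, which Theorem~\ref{theorem-ratpoly} does not supply since $\PsAut(X/\PP^2)$ is finite here. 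The paper's concrete description of $|4L_2-F|$ via quartics through the projected curve sidesteps all of this.
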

\begin{proof}
Let $D= aL_1 + bL_2 - cF$ be a movable divisor. By abuse of notation, denote the restriction of $L_1$ and $L_2$ to a general fiber $X_t$ of $f$ also by $\lambda_1$ and $\lambda_2$. Then the classes $\lambda_1$ and $2\lambda_2 - \lambda_1$ define elliptic curves that sweep out $X_t$. Hence, $D$ must intersect nonnegatively with the classes of these elliptic curves. Since $$\lambda_1 \cdot (\lambda_1 + 2 \lambda_2) = 8, \quad (2\lambda_2 - \lambda_1) \cdot (\lambda_1 + 2\lambda_2) = 16,$$ we conclude that $$b \geq 2c, \quad 2a + b \geq 4c.$$ Considering lines $l_1, l_2, f$ in $\PP^0 \times \PP^3, \PP^1 \times \PP^0$ and $F$, we conclude that $a,b,c \geq 0$. The divisors $L_1, L_2, L_1 + 2L_2 - F$ are basepoint-free, hence movable and effective. Suppose $4L_2 - F$ is movable and effective, then we can conclude the theorem as follows.
If $a \geq c$, then $D = c(L_1 + 2L_2 -F) + (a-c)L_1 + (b-2c) L_2$. Since $a\geq c$ and $b \geq 2c$, we conclude that $D$ is in the nonnegative span of $L_1, L_2$ and $L_1 + 2L_2-F$.
If $a < c$, write $c= a + r$, with $r>0$. Then $$D = a (L_1 + 2L_2 - F) + r(4L_2 -F) + (b-2a-4r)L_2.$$ But $$2 a + b \geq 4c =4a + 4r,$$ hence $b \geq 2a+4r$ and $D$ is in the nonnegative span of $L_2, 4L_2 - F$ and $L_1 + 2L_2 -F$.
There remains to show that $4L_2 - F$ is movable. Consider the projection of the curve $C$ to $\PP^3$ and denote it by $D$. Then $D$ is cut out by 3 quartic surfaces. First, $D$ is a smooth curve of degree 12 and genus 17. Since $L_2^3 \cdot C =1$, if $C$ intersected a fiber of $\pi_2$ with multiplicity two or more, it would contain the entire fiber, contradicting the smoothness of $C$. We conclude that $D$ is smooth. Moreover, $D$ is contained in a 3-parameter family of irreducible quartic surfaces obtained by the projection of the $K3$ surfaces containing $C$. Since the $K3$ surfaces cut out $C$, the quartic surfaces cut out $D$. We conclude that the linear system $|4L_2 - F|$ is at least 2-dimensional and its base locus $\pi_2^{-1} (D)$ has codimension 2. This concludes the proof of the theorem.
\end{proof}

\subsection{$\PP^1 \times V$}
Let $V$ be a threefold with index $2$ and Picard rank 1. Recall that these are a codimension 3 linear section of $G(2,5)$ in the Pl\"{u}cker embedding, a complete intersection of 2 quadrics in $\PP^5$, a cubic 3-fold in $\PP^4$, a double cover of $\PP^3$ branched along a quartic and a sextic hypersurface in $\PP(3,2,1,1,1)$ (see \cite{CPS} and \cite{Fujita}). Let $L_1$ and $L_2$ be the hyperplane classes on $\PP^1$ and $V$, respectively. The degree of $V$ is $5 \geq d \geq 1$ in decreasing order.
Let $H$ be the class $L_1 + L_2$. Take a two-dimensional linear system $V \subset |H|$; then the base locus of $V$ is a curve $C$ of genus $2d+1$, where $d$ is the degree of the Fano 3-fold. Blowing up $C$ we obtain a $K3$ fibration $f: X \rightarrow \PP^2$ whose general fiber is a $K3$ surface $X_{\eta}$. Let $\lambda_1, \lambda_2$ denote the restriction of $L_1$ and $L_2$, respectively, to $X_{\eta}$. These generate $\Pic(X_{\eta})$.

\begin{lemma} \label{prop-fano3fold}
In terms of the basis $\lambda_1, \lambda_2$ the intersection matrix on $\Pic(X_{\eta})$ is
\begin{align*}
\begin{pmatrix}
0 & d \\
d & 2d
\end{pmatrix}
\end{align*}
When $d>1$, there are no $(-2)$-curves on $X_{\eta}$.
\end{lemma}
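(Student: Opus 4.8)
The plan is to verify the stated intersection matrix by a direct Chow-ring computation on $\PP^1 \times V$, exactly as in the previous $\PP^1 \times \PP^3$ lemma, and then to rule out $(-2)$-classes by an arithmetic argument on the associated quadratic form. For part one, I would restrict $L_1$ and $L_2$ to the general fiber $X_\eta$, which is a complete intersection of two members of $V$ cut out on $\PP^1 \times V$, i.e., a surface in the class $(L_1+L_2)^2$. Using $L_1^2 = 0$ on $\PP^1$ and the fact that $L_2^3$ computes the degree $d$ of $V$, I expect:
\begin{align*}
\lambda_1^2 &= L_1^2 (L_1+L_2)^2 = 0, \\
\lambda_1 \lambda_2 &= L_1 L_2 (L_1+L_2)^2 = L_1 L_2^3 = d, \\
\lambda_2^2 &= L_2^2 (L_1+L_2)^2 = 2 L_1 L_2^3 = 2d,
\end{align*}
which gives the asserted matrix. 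The only subtlety is confirming that $L_1 L_2^3$ (pairing the $\PP^1$-class against the top self-intersection of the hyperplane on $V$) equals $\deg V = d$; this is immediate from the product structure of the Chow ring and the definition of $d$ as the degree of $V$ in its given embedding.

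**For the $(-2)$-class statement**, I would compute the self-intersection of a general class $a\lambda_1 + b\lambda_2$ using the matrix above, obtaining $(a\lambda_1+b\lambda_2)^2 = 2ab\,d + 2b^2 d = 2d(ab + b^2)$. Setting this equal to $-2$ gives $d(ab+b^2) = -1$, i.e. $d \cdot (ab+b^2) = -1$ with $a,b \in \ZZ$. Since $ab + b^2$ is an integer, this forces $d \mid 1$, so $d = 1$ is the only possibility. Hence when $d > 1$ there are no $(-2)$-\emph{classes} in $\Pic(X_\eta)$ at all, and a fortiori no $(-2)$-curves (nodal classes), since every nodal class is a $(-2)$-class by adjunction as noted in the excerpt.

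**The main point to watch** is the distinction between the full Picard lattice of $X_\eta$ and the sublattice spanned by $\lambda_1, \lambda_2$: the argument above shows the desired conclusion under the assumption, stated just before the lemma, that $\lambda_1$ and $\lambda_2$ \emph{generate} $\Pic(X_\eta)$ for a general (or very general) fiber. Granting that generation statement, the divisibility computation is a routine finish and presents no real obstacle. The substantive input — which the preceding discussion supplies by taking $V$ general — is precisely that the Picard rank of $X_\eta$ is exactly $2$ with the two restricted classes as a basis; I would flag this as the hypothesis on which the emptiness of $(-2)$-classes rests, since a special fiber could acquire extra classes of self-intersection $-2$. No macros beyond those already defined (e.g. $\ZZ$, $\Pic$, $\PP$) are needed, and the displayed computation above is kept free of blank lines.
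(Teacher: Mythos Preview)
Your proof is correct and follows essentially the same approach as the paper: the intersection numbers are computed via the identical Chow-ring calculation on $\PP^1 \times V$, and the absence of $(-2)$-classes is deduced from the observation that $(a\lambda_1+b\lambda_2)^2 = 2d\,b(a+b)$ is always divisible by $2d$. Your added remark about relying on the hypothesis that $\lambda_1,\lambda_2$ generate $\Pic(X_\eta)$ is appropriate and matches what the paper assumes in the setup preceding the lemma.
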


\begin{proof}
The intersection number can be computed as follows
$$\lambda_1^2 = L_1^2 (L_1+L_2)^2 = 0, \quad \lambda_1 \lambda_2 = L_1 L_2 (L_1 + L_2)^2 = d, \quad \lambda_2^2 = L_2^2 (L_1 + L_2)^2 = 2d.$$
The self-intersection of a class is given by $(a\lambda_1 + b \lambda_2)^2 = 2d b(a+b)$. Hence, the self-intersection of any class is divisible by $2d$. Therefore, if $d>1$, this quadratic form cannot represent a class of self-intersection $-2$.
\end{proof}

\begin{corollary}
If $d>1$, then $$\Mov{\Bl_C(\PP^1 \times V)} = \langle L_1, L_2, L_1 + L_2 - F, 2L_2 - F \rangle.$$ In particular, the cone conjecture holds for $\Bl_C(\PP^1 \times V)$.
\end {corollary}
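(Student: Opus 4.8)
The plan is to mirror the proof of the preceding theorem for $\PP^1 \times \PP^3$, using Lemma \ref{prop-fano3fold} in place of the intersection computation there. Write a movable class as $D = aL_1 + bL_2 - cF$. Since $d > 1$, by Lemma \ref{prop-fano3fold} the fibre $X_\eta$ has no $(-2)$-curves, so its nef (= movable) cone is the closure of the positive cone, bounded by the two primitive isotropic classes. The self-intersection of $a\lambda_1 + b\lambda_2$ is $2db(a+b)$, so these classes are $\lambda_1$ and $\lambda_2 - \lambda_1$ (both pair positively with the ample $\lambda_1 + \lambda_2$). Each is nef and primitive, hence defines an elliptic fibration of $X_\eta$, and the same holds on a general fibre $X_t$; the fibres of these elliptic fibrations sweep out $X_t$, and as $t$ varies they sweep out $X$. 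A movable $D$ must therefore pair non-negatively with these covering elliptic curves.

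To convert this into inequalities on $(a,b,c)$ I first record that $F|_{X_t} = \lambda_1 + \lambda_2$: indeed $L_1 + L_2 - F = f^*\OO_{\PP^2}(1)$, which restricts to $0$ on a fibre. Computing on the K3 surface $X_t$ with Lemma \ref{prop-fano3fold}, for the elliptic curves $e_1, e_2$ of classes $\lambda_1$ and $\lambda_2 - \lambda_1$ one finds
\[ D \cdot e_1 = d(b-c), \qquad D \cdot e_2 = d(a+b-2c), \]
giving $b \geq c$ and $a+b \geq 2c$. Pairing $D$ with a ruling $\PP^1 \times \{v\}$, with a line in a fibre $\{t\} \times V$, and with a line in $F$ (each of which covers $X$ or a divisor) yields $a,b,c \geq 0$. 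An elementary analysis of the cone cut out by $a,b,c \geq 0$, $b \geq c$, $a+b \geq 2c$ — exactly as in the previous theorem — shows its extremal rays are $L_1$, $L_2$, $L_1 + L_2 - F$ and $2L_2 - F$, so $\Mov{X}$ is contained in their span.

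For the reverse inclusion I must show each of these four classes is movable; this is the main obstacle, and all of it concentrated in the last generator. The classes $L_1$ and $L_2$ are pulled back from basepoint-free systems, and $L_1 + L_2 - F$ defines $f$, so these three are basepoint-free, hence movable. For $2L_2 - F$ I would argue as in the $\PP^1 \times \PP^3$ case, using $-K_V = 2L_2|_V$: project to $V$ via $\pi_2$ and set $C' = \pi_2(C)$. A general fibre is the intersection of two divisors $s g_0 + t g_1 = 0$ and $s g_0' + t g_1' = 0$ of type $(1,1)$, with $g_i, g_i' \in H^0(V, L_2)$; eliminating $[s:t]$ produces the anticanonical divisor $\{g_0 g_1' - g_1 g_0' = 0\}$ of $V$, which contains $C'$. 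As the fibre varies over $\PP^2$ these divisors sweep out an at-least-two-dimensional subsystem of $|{-}K_V|$ through $C'$ whose base locus is $C'$; pulling back, $|2L_2 - F|$ has base locus $\pi_2^{-1}(C')$ of codimension $2$, so $2L_2 - F$ is movable. I expect the one point needing care here is verifying that the base locus of this subsystem is exactly $C'$ (codimension $2$), especially in the branched-cover case $d = 2$ where $L_2$ is only ample and not very ample; the Künneth decomposition of $H^0(\OO_{\PP^1 \times V}(1,1))$ should suffice to make the elimination argument go through. Combining the two inclusions gives the stated equality, and since $\Mov{X}$ is then rational polyhedral the movable cone conjecture holds trivially for $\Bl_C(\PP^1 \times V)$.
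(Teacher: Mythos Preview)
Your argument follows the paper's approach closely and is correct. In fact, your computation $D\cdot e_2=d(a+b-2c)$, giving $a+b\ge 2c$, is right where the paper's text contains a slip: with $F|_{X_t}=\lambda_1+\lambda_2$ one has $(\lambda_2-\lambda_1)\cdot F|_{X_t}=2d$, so the paper's displayed inequality ``$d(a+b-c)\ge 0$'' and the coefficient ``$(a+b-c)$'' in its second decomposition should both read $a+b-2c$. This is exactly the facet inequality for $\langle L_1,L_2,L_1+L_2-F,2L_2-F\rangle$, so your bounds are the correct ones.

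The only genuine difference is in showing $2L_2-F$ is movable. The paper argues by a Riemann--Roch count: the image $B=\pi_2(C)\subset V$ has degree $3d$ and genus $2d+1$, and comparing $h^0(\OO_B(2))$ with $h^0(\OO_{\PP^{d+1}}(2))$ produces enough quadrics through $B$ to cut it out. You instead transplant the paper's own $\PP^1\times\PP^3$ argument: writing $D_i=s\,g_0^i+t\,g_1^i$, the determinants $\Delta_{ij}=g_0^ig_1^j-g_1^ig_0^j$ are anticanonical on $V$, and one checks that the projection of the fibre over $[a_0{:}a_1{:}a_2]$ is cut out by $a_2\Delta_{01}-a_1\Delta_{02}+a_0\Delta_{12}$, so the $\pi_2(X_t)$ form a genuinely $2$-dimensional \emph{linear} subsystem of $|{-}K_V|$ through $B$. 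Your concern about the base locus is then handled by the same remark the paper makes in the $\PP^1\times\PP^3$ case: since the $K3$ fibres cut out $C$, their projections cut out $B$ (concretely, for $v\notin B$ the restriction $V\to H^0(\OO_{\PP^1}(1))$ is surjective, so a general pencil has no common zero over $v$). This route has the advantage of working uniformly for $d=2$ without separately treating the branched double cover.
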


\begin{proof}
Let $D = a L_1 + bL_2 - cF$ be a movable divisor class. By abuse of notation, denote the restriction of $L_1$ and $L_2$ to a general fiber $X_t$ of $f$ by $\lambda_1$ and $\lambda_2$. Then the classes $\lambda_1$ and $\lambda_2 - \lambda_1$ are classes of elliptic curves that sweep out the fiber $X_t$. Consequently, $D$ must intersect them nonnegatively. We conclude that $d(b-c) \geq 0$ and $d(a+b-c) \geq 0$. Moreover, $D$ must intersect the curves $l_1$, $l_2$ and $f$ nonnegatively since these curves cover $X$ or a divisor in $X$. Hence, $a,b,c \geq 0$.

The divisor classes $L_1, L_2$ and $L_1 + L_2 - F$ are basepoint-free, hence they are movable. Assuming that $2L_2 - F$ is movable and effective, we shoe that $D$ lies in the cone spanned by these 4 divisor classes. If $a \geq c$, then
$$D = c(L_1 + L_2 - F) + (a-c)L_1 + (b-c)L_2,$$ is a nonnegative linear combination of $L_1, L_2$ and $L_1 + L_2 - F$ since $b\geq c$. If $a < c$, then $$D = a(L_1 + L_2 - F) + (c-a)(2L_2 - F) + (a+b-c)L_2$$ is a nonnegative linear combination of $L_2, 2L_2 - F$ and $L_1 + L_2 - F$ since $a+b \geq c$.

To conclude the proof, there remains to show that $2L_2 - F$ is movable. Consider the projection of $C$ to $V$. We analyze the elements of the linear system $|\OO_V(2)|$ that contain $B=\pi_2(C)$. The curve $B$ is a curve of degree $3d$ and genus $2d+1$ in $\PP^{d+1}$. By Serre duality, the line bundle $\OO_B(2)$ is non-special. Hence, by Riemann-Roch, $h^0(\OO_B(2))=4d$. Since $$h^0(\OO_{\PP^{d+1}}(2)) = {d+3 \choose 2},$$ we conclude that $B$ is contained in a linear system of quadrics of dimension at least $${d+3 \choose 2} - 4d.$$ In fact, this linear system of quadrics cuts out $B$. We conclude that $2L_2 - F$ is effective and its base locus is contained in the proper transform of $\pi_2^{-1}(B)$. Hence, $2L_2 - F$ is movable and effective. This concludes the proof.
\end {proof}

\begin{proposition}
When $d=1$, $\Mov{\Bl_C (\PP^1 \times V)} = \langle L_1, L_2, L_1 + L_2 - F \rangle$ and the cone conjecture holds for $\Bl_C (\PP^1 \times V)$.
\end{proposition}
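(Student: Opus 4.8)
The plan is to mimic the proof of the preceding Corollary, but with $d=1$ the geometry changes in two essential ways that must be accounted for. First, by Lemma \ref{prop-fano3fold} the intersection form becomes $\begin{pmatrix} 0 & 1 \\ 1 & 2 \end{pmatrix}$, whose discriminant is $-1$, so the lattice is unimodular and $(-2)$-classes now exist on $X_\eta$. Concretely, the class $\lambda_2 - \lambda_1$ has self-intersection $2(\lambda_2^2) - \dots = 2\cdot 1\cdot(-1)(0) = 0$ is not it; computing $(a\lambda_1 + b\lambda_2)^2 = 2b(a+b)$, the value $-2$ is attained, e.g.\ at $(a,b)=(1,-1)$ giving $2(-1)(0)=0$ — so I would first solve $2b(a+b)=-2$, i.e.\ $b(a+b)=-1$, finding the $(-2)$-classes explicitly. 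The existence of a $(-2)$-curve means $\lambda_2 - \lambda_1$ (or whichever class is effective of square $-2$) bounds the nef cone of $X_\eta$ from one side, so the nef cone of the generic fiber is strictly smaller than in the $d>1$ case, and correspondingly the extremal ray $2L_2 - F$ of the movable cone is replaced by a ray coming from this $(-2)$-class.

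My approach is therefore to argue that the movable cone is cut out by fewer effective rays, yielding the claimed three-generator cone $\langle L_1, L_2, L_1 + L_2 - F\rangle$. First I would identify the $(-2)$-curve on $X_\eta$ and compute the nef cone $\Nef{X_\eta}$ using Proposition \ref{prop-restriction}(iii), which identifies $\Mov{X/\PP^2}$ with $\Nef{X_\eta}$. The presence of the nodal class means the generic fiber admits no nontrivial SQMs over $\PP^2$ in the relevant directions, so the only effective movable classes on $X$ arise from the basepoint-free divisors $L_1$, $L_2$, $L_1 + L_2 - F$. Next, exactly as in the $d>1$ proof, I would let $D = aL_1 + bL_2 - cF$ be movable and intersect it against the elliptic curves $\lambda_1$ and $\lambda_2 - \lambda_1$ sweeping out $X_t$, together with the covering curves $l_1, l_2, f$; these give $a, b, c \geq 0$ and the inequalities $b \geq c$, $a + b \geq c$. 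The key point is that in the $d=1$ case there is no analogue of the extra relation forcing a fourth generator: I would verify directly that these inequalities already confine $D$ to $\langle L_1, L_2, L_1 + L_2 - F\rangle$ by writing $D = c(L_1 + L_2 - F) + (a-c)L_1 + (b-c)L_2$ when $a \geq c$, and handling $a < c$ by showing the putative fourth ray $2L_2 - F$ is \emph{not} movable (its restriction to $X_\eta$ lies outside $\Nef{X_\eta}$ because it sits beyond the $(-2)$-curve), so the cone cannot extend in that direction.

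I expect the main obstacle to be showing that $2L_2 - F$ fails to be movable when $d=1$, reversing the situation of the previous corollary. In the $d>1$ argument, movability of $2L_2 - F$ was established via a Riemann--Roch count on the projected curve $B = \pi_2(C)$ of degree $3d$ and genus $2d+1$ inside $\PP^{d+1}$; for $d=1$ this becomes a curve of degree $3$ and genus $3$ in $\PP^2$, where the linear-system dimension count degenerates. I would need to check that either $\Nef{X_\eta}$ genuinely terminates at the $(-2)$-curve class (so that $2\lambda_2 - \lambda_1$, $=$ the restriction of $2L_2 - F$, is not nef, hence not movable on $X_\eta$), and then invoke Proposition \ref{prop-restriction}(ii) to conclude $2L_2 - F$ is not $f$-movable on $X$. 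Combined with the inequalities above, this pins the movable cone to three generators. Finally, since all three generators are basepoint-free and the resulting cone is rational polyhedral with $\PsAut^*(X,\Delta)$ acting with finite (indeed trivial, by Proposition \ref{prop-auts} and finiteness of $\Aut(X_\eta)$ when the fiber has a unique nodal class) fundamental domain, the cone conjecture for $\Mov{X}$ follows immediately.
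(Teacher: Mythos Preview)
Your proposal has the right overall shape but contains a genuine gap arising from a miscalculation of the $(-2)$-class.

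Solving $b(a+b)=-1$ gives $(a,b)=(-2,1)$ or $(2,-1)$, so the $(-2)$-class on $X_\eta$ is $\lambda_2-2\lambda_1$, \emph{not} $\lambda_2-\lambda_1$. The latter has square $0$ (as you in fact compute) and is an isotropic class, not a nodal one. This matters because intersecting a movable class $D=aL_1+bL_2-cF$ (whose restriction to $X_t$ is $(a-c)\lambda_1+(b-c)\lambda_2$) with the elliptic curves $\lambda_1$ and $\lambda_2-\lambda_1$ yields only $b\ge c$ and $a+b\ge 2c$. These inequalities, together with $a,b,c\ge 0$, do \emph{not} force $D$ into $\langle L_1,L_2,L_1+L_2-F\rangle$: for instance $D=3L_2-F$ satisfies all of them but lies outside that cone. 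Your fallback---showing that the single class $2L_2-F$ is not movable---rules out one ray but does not bound the cone; convexity alone does not let you exclude all classes with $a<c$ from that information.

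The paper's argument fixes exactly this point: it intersects $D$ with the $(-2)$-curve $\lambda_2-2\lambda_1$. One checks $\lambda_1\cdot(\lambda_2-2\lambda_1)=1$ and $\lambda_2\cdot(\lambda_2-2\lambda_1)=0$, so $D\cdot(\lambda_2-2\lambda_1)=a-c$. Since the $(-2)$-curves on the fibers $X_t$ sweep out a divisor in $X$ as $t$ varies over $\PP^2$, any movable $D$ must meet them nonnegatively, giving $a\ge c$ directly. Combined with $b\ge c$ from $\lambda_1$ and $a,b,c\ge 0$, one writes $D=c(L_1+L_2-F)+(a-c)L_1+(b-c)L_2$ and is done. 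Equivalently, your route via Proposition~\ref{prop-restriction} would work if you actually computed $\Nef{X_\eta}=\langle\lambda_1,\lambda_2\rangle$ (bounded by the isotropic ray $\lambda_1$ and the wall $(\lambda_2-2\lambda_1)^\perp$) and then observed that $q(\Mov{X})\subset\Nef{X_\eta}$ forces $a-c\ge 0$ and $b-c\ge 0$; but you must identify the correct nodal class to get there.
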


\begin{proof}
Since $L_1$, $L_2$ and $L_1 + L_2 - F$ are basepoint-free, the cone they span is in the movable cone. Conversely,
let $D = a L_1 + bL_2 - cF$ be a movable divisor on $\Bl_C (\PP^1 \times V)$.
By abuse of notation, let $\lambda_1$ and $\lambda_2$ denote the restrictions of $L_1$ and $L_2$ to a general fiber $X_t$. Then $\lambda_1$ is the class of an elliptic curve that sweeps out $X_t$. The class $\lambda_2 - 2 \lambda_1$ is the class of a $(-2)$-curve on $X_t$. The Zariski closure of these curves cannot have codimension two or more in $X$. Consequently, any movable divisor must intersect these curves nonnegatively. We conclude that $b \geq c$ and $a \geq c$. As in the previous cases, we must have $a, b, c \geq 0$. Hence, we can write $$D = c(L_1 + L_2 - F) + (a-c) L_1 + (b-c) L_2$$ as a nonnegative linear combination.
\end{proof}

\subsection{The blowup of the cone over a quadric threefold at the vertex}
Let $Z$ be the blowup of the vertex of the cone over the quadric threefold. Let $L$ denote the pullback of the hyperplane class and let $E$ be the exceptional divisor. Then the nonzero top intersection numbers are
$$L^4 = 2, \quad E^4 = -2.$$ The class of $H$ is $2 L-E$. Let $V$ be a two-dimensional linear system $V \subset |H|$. Then the base locus of $V$ is a smooth curve of genus 16. Blowing up the base locus, we get a fibration $f: X \rightarrow \PP^2$ whose generic fiber $X_{\eta}$ is a $K3$ surface. The Picard group of $X_{\eta}$ is generated by $\lambda, e$, the restrictions of $L$ and $E$, respectively.

\begin{lemma}
In terms of the basis $\lambda,e$, the intersection matrix is
\begin{align*}
\begin{pmatrix}
8 & 0 \\
0 & -2
\end{pmatrix}
\end{align*}
The only $(-2)$-class on $X_{\eta}$ is $e$.
\end{lemma}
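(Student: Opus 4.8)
The plan is to prove the lemma in two steps: compute the three intersection numbers by reducing to intersection theory on $Z$, and then solve the resulting Diophantine equation to pin down the $(-2)$-classes.

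For the intersection matrix, I would first identify the generic fiber. Since the base curve $C = S \cap H_0$ is a divisor on the surface $S = H_1 \cap H_2$ cut out by general members $H_i \in V \subset |H|$, blowing up $C$ inside the ambient fourfold does not modify $S$: its proper transform $X_\eta$ is isomorphic to $S$, and under this isomorphism $\lambda$ and $e$ are the restrictions of $L$ and $E$. Hence $\lambda^2 = L^2 \cdot H^2$, $\lambda \cdot e = L \cdot E \cdot H^2$ and $e^2 = E^2 \cdot H^2$, all computed on $Z$, where $H = 2L - E$. Expanding $H^2 = 4L^2 - 4LE + E^2$ and using that the only nonvanishing top monomials are $L^4 = 2$ and $E^4 = -2$, these evaluate to $8$, $0$, and $-2$ respectively, giving the stated matrix.

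The point that makes this expansion immediate is that every mixed monomial $L^a E^b$ with $a, b \geq 1$ vanishes. This is because $E$ is contracted to the vertex by the blowdown and $L$ is the pullback of the hyperplane class on the cone, so $L|_E$ is numerically trivial; thus any monomial involving both $L$ and $E$ restricts on $E$ to a product with a trivial class and integrates to zero. I regard this bookkeeping --- identifying the fiber with $S$ and justifying the vanishing of the mixed intersection numbers --- as the only step requiring care; everything else is formal.

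For the $(-2)$-classes, write an arbitrary class as $x = a\lambda + be$ with $a, b \in \mathbb{Z}$, using that $\lambda, e$ generate $\Pic(X_\eta)$. From the matrix, $x^2 = 8a^2 - 2b^2$, so $x^2 = -2$ is equivalent to $b^2 - 4a^2 = 1$, i.e. $(b - 2a)(b + 2a) = 1$. Over the integers this forces $b - 2a = b + 2a = \pm 1$, whence $a = 0$ and $b = \pm 1$. Thus the only $(-2)$-classes are $\pm e$, and since $E$ is effective the nodal class among them is $e$, as claimed.
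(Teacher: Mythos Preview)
Your proof is correct and follows essentially the same approach as the paper: both compute the intersection numbers as $L^2 H^2$, $LEH^2$, $E^2 H^2$ on $Z$ using $L^4=2$, $E^4=-2$, and both solve the $(-2)$-class equation by factoring $4a^2-b^2=-1$. You supply a little more justification (why the fiber identifies with $S$ and why the mixed monomials $L^aE^b$ vanish), but the argument is the same.
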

\begin{proof}
The intersection product can be calculated by $$\lambda^2 = L^2 (2 L- E)^2 = 8, \quad e^2 = E(2L - E)^2 = -2, \quad \lambda e = L E(2 L - E)^2 = 0.$$ A ($-2$) curve satisfies the equation $(a\lambda + be)^2 = 8 a^2 - 2b^2 =-2$. Writing $(2a+b)(2a-b) =-1$, we see that the only integral solutions are $a=0, b=1$.
\end{proof}

\begin{theorem}
The effective movable cone of $X$ is given by $$\Mov{X} = \langle L, L-E, 2L-E-F, 3L-3E - F \rangle.$$ In particular, the cone conjecture holds for $X$.
\end{theorem}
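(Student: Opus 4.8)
The plan is to mirror the structure used in the previous finite cases, exploiting the fact that $X_\eta$ has exactly one $(-2)$-class, namely $e$. First I would let $D = a L + b L' - c F$ (writing everything in terms of $L$, $L-E$, and $F$, or equivalently $D = \alpha L + \beta E - c F$) be an arbitrary movable divisor class. Restricting to a general fiber $X_t$, I would identify the classes of extremal curves that sweep out $X_t$: since $X_\eta$ is a $K3$ with intersection matrix $\operatorname{diag}(8,-2)$ and unique $(-2)$-class $e$, the nef cone of $X_\eta$ is bounded on one side by the $(-2)$-curve $e$ (flopping wall) and on the other by a class of self-intersection zero corresponding to an elliptic fibration. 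Intersecting $D$ with these fiber curves, together with intersecting $D$ against curves $f$ in the exceptional divisor $F$ and against covering curves from the blowdown, yields the finitely many linear inequalities cutting out the claimed cone.

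Next I would verify that the four proposed generators $L$, $L-E$, $2L-E-F$, and $3L-3E-F$ actually lie in $\Mov{X}$. The first three should be basepoint-free (hence movable and effective): $L$ defines the blowdown to the cone, $L-E$ is nef on the blown-up quadric cone, and $2L-E-F = \pi^*H - F$ defines the $K3$ fibration $f$. The genuinely nontrivial generator is $3L-3E-F$, and establishing its movability is the step I expect to be the main obstacle. Following the template of the $\PP^1\times\PP^3$ and $\PP^1\times V$ cases, I would analyze a suitable projection of the base curve $C$ and show that the corresponding linear system is at least two-dimensional with base locus of codimension at least two; concretely, $3L-3E$ should restrict on the fiber to the class $3\lambda - 3e$ of self-intersection $8\cdot 9 - 2\cdot 9 = 54 > 0$, and I would argue that the surfaces cutting out the image of $C$ move in a family large enough to force the base locus of $|3L-3E-F|$ down to codimension two.

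With the generators shown to be movable and the inequalities in hand, the final step is the convexity bookkeeping: I would write an arbitrary $D$ satisfying the inequalities as an explicit nonnegative combination of the four generators, splitting into cases (for instance according to the sign of $a-c$ or of $\alpha - 3c$) exactly as in the preceding corollaries, so that in each region $D$ decomposes into a nonnegative linear combination of three of the four generators. The self-duality of the problem under the classes $e$ and the $(-2)$-wall should make the two cases symmetric. Since $\rho(X)=3$ here and the unique $(-2)$-curve on the fiber produces a genuine flop, I would expect to also remark that the wall $3L-3E-F$ corresponds to this flopping contraction, so the movable cone is the union of the nef cone of $X$ and the nef cone of its flop; this is consistent with the assertion that the movable cone has finitely many faces, in agreement with the finiteness of $\Aut(X_\eta)$ guaranteed by Corollary \ref{corollary-torellispecial} when the $(-2)$-class structure is this rigid.
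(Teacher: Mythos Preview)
Your overall plan matches the paper's: bound the movable cone from outside by intersecting a movable class $D=aL-bE-cF$ with covering curves, verify the four generators are movable, and do the convex bookkeeping. The inequalities you would obtain from rulings of the quadric, lines through the vertex, lines in $E$, fibers of $F$, the $(-2)$-curve $e$, and the elliptic class $\lambda-2e$ are exactly $a,b,c\ge 0$, $a\ge b$, $b\ge c$, and $2a\ge b+3c$, as in the paper. (Your suggested case split ``sign of $a-c$'' is not the right one; the paper splits on $a\ge b+c$ versus $a<b+c$, which is what makes the two decompositions nonnegative.)

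The real gap is your treatment of $3L-3E-F$. First, a computational slip: the restriction of $3L-3E-F$ to a fiber $S$ is not $3\lambda-3e$ but $3\lambda-3e-(2\lambda-e)=\lambda-2e$, since $F|_S$ is the class of $C$ on $S$, namely $(2L-E)|_S$. More importantly, even a correct fiberwise self-intersection would not prove movability on $X$: a class can be big on every fiber and still have a fixed divisorial component coming from the base or from the geometry of how the fibers fit together. The paper's argument is genuinely geometric and is the main content of the proof. One uses the morphism $|L-E|:Z\to Q^3$ to project $C$ to a curve $B\subset Q^3$ of degree $14$ and genus $16$, computes via Riemann--Roch that $B$ lies on a $3$-dimensional family of cubics modulo multiples of the quadric, and then argues that these cubics cut out $B$ scheme-theoretically (by analyzing the residual linear series $|\lambda-2e|$ on the $(2,3)$ $K3$ surfaces through $B$). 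Once $B$ is cut out by cubics, sections of $|3(L-E)-F|$ have base locus contained in the proper transform of the cone over $B$, which is codimension two, giving movability. Without this projection argument you do not have a proof that $3L-3E-F$ is movable, and the theorem does not follow.
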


\begin{proof}
Let $D= aL - b E -c F$ be a movable divisor on $X$. Then the intersection of $D$ with any curve that covers a divisor has to be nonnegative. Taking lines in the quadric, proper transforms of lines in the quadric passing through the cone point, and lines in the fibers of the two exceptional divisors $E$ and $F$, we see that $a,b,c \geq 0$ and $a \geq b$. The base curve $C$ intersects the exceptional divisor $E$ in two points. The exceptional divisor is isomorphic to a cubic threefold, hence is covered by conics passing through these two points. We conclude that $b \geq c$. Finally, taking the elliptic curves with class $\lambda - 2e$ on the $K3$ fibers, we get the inequality $$2a \geq b + 3c.$$

We already know that $L, L-E$ and $2L-E-F$ are basepoint-free, hence they are movable. Assume that $3L-3E-F$ is movable and effective. If $a \geq b+c$, then $D$ is a nonnegative linear combination of basepoint-free divisors $$aL - bE -cF = c(2L-E-F) + (b-c)(L-E) + (a-b-c)L.$$ On the other hand, suppose that $b+c \geq a$. Then $D$ can be expressed as a nonnegative linear combination of movable classes as follows $$aL-bE-cF = (a-b)(2L-E-F) + (c-a+b)(3L-3E+ F) + (2a-b- 3c) (L-E).$$ Note that since $2a \geq b+3c$ and $b+c \geq a$, this is indeed a nonnegative linear combination of movable classes.

There remains to check that $3L-3E-F$ is a movable class. The linear system $|L-E|$ induces the projection of $Z$ to a quadric threefold $Q$. The image $B$ of the base curve $C$ under this map is a degree $14$ and genus $16$ curve that lies on $Q$. We claim that $B$ is cut out by equations of degree 3 on $Q$. First, observe that $B$ is contained in a unique quadric. The images of the complete intersection $K3$ surfaces containing $C$ are degree $6$ $K3$ surfaces that contain $B$. Such $K3$ surfaces are $(2,3)$ complete intersections and are contained in the unique quadric threefold $Q$. Consequently, if $B$ were contained in another quadric, $B$ would be a component of a curve of degree 12 contradicting that $B$ has degree 14. The Riemann-Roch Theorem implies that
$h^0(\OO_B(3))= 27$. Therefore, modulo the subspace of cubics divisible by the equation of $Q$, $B$ is contained in a $3$-dimensional subspace of cubic threefolds. Consider the $K3$ surface $S$ cut out by $Q$ and a general cubic containing $B$. The linear system of cubics $H^0(\PP^3, \OO_{\PP^3}(3) \otimes I_Z)$ on $S$ has $B$ as a fixed component together with the residual linear series $|\lambda - 2e|$ on $S$. Since the latter linear system is basepoint-free, we conclude that the cubics cut out $B$. Consequently, the base locus of $3L-3E-F$ is contained in the proper transform of the cone over $B$ and is a movable effective class. This concludes the proof of the theorem.
\end{proof}

\section{Infinite cases}\label{sect-infinite}
In this section we verify the movable cone conjecture for some varieties whose movable cone has infinitely many faces. We consider $\PP^3 \times \PP^3$, $\PP^1 \times \PP^1 \times \PP^1 \times \PP^1$ and the double cover of $\PP^2 \times \PP^2$ branched along a divisor of type $(2,2)$. 
\subsection{Picard number 3}

If $X$ has Picard number 3, then $N^1(X/\PP^n)$ is 2-dimensional. This simplifies things in two ways: cones in this space have exactly 2 extremal rays, and  the nef cones of SQMs of $X$ over $\PP^n$ are ``linearly ordered" inside the movable cone. Using these features, we will prove some general statements for $X$ of Picard rank 3 in this section.

\begin{theorem} \label{theorem-ratpoly}
Let $X$ be a smooth variety of Picard rank 3 such that $-K_X$ is semi-ample and gives a fibration $X \arrow \mathbf P^n$ with irreducible fibers. Moreover, assume that the group $\PsAut(X/\PP^n)$ is infinite. Let $X_\alpha$ be an SQM of $X$ obtained by a sequence of flops, and suppose $X_\alpha$ has no flipping contraction. Then the effective nef cone $\Nef{X_\alpha}$ is rational polyhedral, spanned by semi-ample divisors.
\end{theorem}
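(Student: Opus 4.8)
The plan is to pass to the dual cone of curves $\overline{\Curv(X_\alpha)}$ and prove that it is rational polyhedral, from which rational polyhedrality of $\nef{X_\alpha}$ follows by duality. First I would record that, since flops are $K$-trivial, the proper transform of $-K_X$ is $-K_{X_\alpha}$ and remains semi-ample, still defining the fibration $g : X_\alpha \arrow \PP^n$; write $\Phi = -K_{X_\alpha}$. In particular $\Phi$ is nef, so $\Phi \cdot C \geq 0$ for every curve $C$ and $\Phi^\perp$ is a supporting hyperplane of $\overline{\Curv(X_\alpha)}$. By Proposition \ref{prop-restriction}, applied to $X_\alpha$ (whose geometric generic fibre is unchanged by the flops), the quotient $q : N^1(X_\alpha) \arrow N^1(X_\alpha/\PP^n) \iso N^1(X_\eta)$ has kernel exactly $\RR\Phi$. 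Thus $\overline{\Curv(X_\alpha)}$ is a three-dimensional cone lying in the half-space $\{\Phi \geq 0\}$, meeting the boundary hyperplane $\Phi^\perp$ in the face of ``vertical'' classes (curves contracted by $g$).

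Next I would split the extremal rays of $\overline{\Curv(X_\alpha)}$ into the $\Phi$-positive ones and the $\Phi$-trivial ones. Since $\Phi = -K_{X_\alpha}$, the $\Phi$-positive rays are exactly the $K_{X_\alpha}$-negative rays; applying the Cone Theorem \cite[Theorem 3.7]{KollarMori} to the induced klt Calabi--Yau pair $(X_\alpha,\Delta_\alpha)$ (perturbing the numerically trivial $K_{X_\alpha}+\Delta_\alpha$ by a small ample class) shows there are finitely many such rays, each spanned by a rational curve, hence rational. It therefore remains to control the face $\overline{\Curv(X_\alpha)} \cap \Phi^\perp$ of $K$-trivial vertical classes.

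This $K$-trivial vertical face is the main obstacle. A priori the generic fibre $X_\eta$ is a K3 surface with infinite automorphism group, so its Mori cone has irrational boundary rays, and one must explain why these do not force irrational walls of $\nef{X_\alpha}$. The resolution is that an extremal ray in $\Phi^\perp$ corresponds to a $K$-trivial extremal contraction of $X_\alpha$; since by hypothesis $X_\alpha$ has no flipping contraction, such a contraction is a flop (or divisorial), hence spanned by the rational class of an actual flopping rational curve in a special fibre, rather than by the transcendental boundary rays of the very general fibre. To obtain finiteness and rationality cleanly, I would invoke Proposition \ref{prop-auts}: every flop of $X_\alpha$ over $\PP^n$ restricts to an automorphism of $X_\eta$, so the chamber $q(\nef{X_\alpha}) \subseteq \nef{X_\eta} \iso \Mov{X_\alpha/\PP^n}$ is a fundamental domain for the action of $\Aut(X_\eta) \iso \PsAut(X_\alpha/\PP^n)$ on $\nef{X_\eta}$. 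By the Morrison--Kawamata conjecture for the K3 surface $X_\eta$, known by Looijenga--Sterk \cite{Sterk1985}, this fundamental domain is rational polyhedral; equivalently $\overline{\Curv(X_\alpha)} \cap \Phi^\perp = q^*\big( q(\nef{X_\alpha})^\vee \big)$ has finitely many rational extremal rays.

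Combining the two kinds of rays shows $\overline{\Curv(X_\alpha)}$ is rational polyhedral, hence so is $\nef{X_\alpha}$. Finally, to reach the stated conclusion I would note that each codimension-one face of this rational cone is cut out by a rational nef class $D$; since $-K_{X_\alpha}$ is nef and $K_{X_\alpha}+\Delta_\alpha \equiv 0$, the base-point-free theorem applied to $(X_\alpha,\Delta_\alpha)$ shows that the big classes among them are semi-ample, while the non-big extremal classes (such as the fibration class $\Phi$) are semi-ample because they define the extremal contractions of the faces they span. In particular the spanning classes of $\nef{X_\alpha}$ are semi-ample and therefore effective, so $\Nef{X_\alpha} = \nef{X_\alpha}$ is rational polyhedral and spanned by semi-ample divisors.
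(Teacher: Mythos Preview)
Your argument has two genuine gaps, one in each half of the ray-counting.

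\textbf{The $K$-negative rays.} The Cone Theorem does \emph{not} say that a klt variety has finitely many $K$-negative extremal rays; it says they are locally discrete in the open half-space $\{K<0\}$, and they may accumulate onto $K^\perp$. Perturbing the Calabi--Yau pair by a small ample class does not help: $K_{X_\alpha}+\Delta_\alpha+\epsilon A$ is ample, so there are no negative rays at all, while subtracting an ample class destroys effectivity of the boundary. The paper handles this the other way round: it first pins down the $K^\perp$ face (exactly two rational rays $R_1,R_2$, see below), and then argues that if infinitely many $K$-negative rays $R_i$ existed they would accumulate onto $K^\perp$; but since $X_\alpha$ has no flipping contraction, each $R_i$ is a divisorial or fibre-type ray, so an effective divisor $D$ with $D\cdot R_1<0$ and $D\cdot R_2<0$ can be negative on only finitely many $R_i$ (any curve with $D\cdot C<0$ lies in $\operatorname{Supp}(D)$). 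This forces the accumulation point to lie strictly outside $\langle R_1,R_2\rangle$, a contradiction. The ``no flipping'' hypothesis is thus essential for the $K$-negative count, not only for the $K$-trivial one.

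\textbf{The $K$-trivial face.} Proposition \ref{prop-auts} identifies $\PsAut(X/\PP^n)$ with $\Aut(X_\eta)$; it says nothing about flops to a \emph{different} model, so your sentence ``every flop of $X_\alpha$ over $\PP^n$ restricts to an automorphism of $X_\eta$'' is a misreading. Consequently there is no reason for $q(\nef{X_\alpha})$ to be a fundamental domain for $\Aut(X_\eta)$: the chamber decomposition of $\Mov{X/\PP^n}\cong\Nef{X_\eta}$ into relative nef cones is permuted by $\PsAut(X/\PP^n)$, but a single chamber need not be a full orbit representative. Sterk's theorem gives a rational polyhedral fundamental domain for $\Aut(X_\eta)$, but that domain is not a priori related to any particular $\nef{X_\alpha/\PP^n}$. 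The paper instead uses the hypothesis that $\PsAut(X/\PP^n)$ is infinite directly: an infinite (up to index two, cyclic) group acting on the $2$-dimensional cone $\mov{X/\PP^n}$ has its eigenspaces on the boundary rays, yet must permute the chambers freely; hence no chamber can touch the boundary of $\mov{X/\PP^n}$. Then running the MMP (via \cite{BCHM}) for big classes just outside each edge of $\nef{X_\alpha/\PP^n}$ shows both edges are flopping walls---giving exactly two rational $K$-trivial extremal rays. Your sketch never uses the infinite-$\PsAut$ hypothesis, which should have been a warning sign.
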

\begin{proof}
Since $X_\alpha$ is obtained from $X$ be a sequence of flops, the bundle $-K_X$ remains semi-ample on $X_\alpha$. The decomposition of the movable cone of $X$ into nef cones of SQMs therefore looks as in Figure \ref{nefconesA}. 

Passing to the relative cones, the relative nef cone $\nef{X_\alpha/\PP^n}$ is a subcone of $\mov{X/\PP^n}$. First we consider the case when $\nef{X_\alpha/\PP^n}$ contains a boundary ray of  $\mov{X/\PP^n}$. Now $\PsAut(X/\PP^n)$ is an infinite group acting on the 2-dimensional cone $\mov{X/\PP^n}$. Passing to a subgroup of index 2 if necessary, we can assume that the group is infinite cyclic and acts by orientation-preserving maps. A generator of this group acts as a $2 \times 2$ matrix, hence has 2 one-dimensional eigenspaces, which correspond to the boundary rays of $\mov{X/\PP^n}$. On the other hand, the action of $\PsAut(X/\PP^n)$ permutes the nef cones of SQMs inside the relative movable cone and preserves orientation, so it cannot fix a ray of any nef cone. This is a contradiction,  showing that this case is impossible.

Therefore it must be the case that $\nef{X_\alpha/\PP^n}$ is contained in the interior of $\mov{X/\PP^n}$, Choose effective divisors $D_1$ and $D_2$ whose classes are as in Figure \ref{relmov}: they are in the interior of the relative effective cone, hence are relatively big. By \cite[Theorem 1.2]{BCHM}, for sufficiently small positive numbers $\epsilon_i$ we can run the MMP for the divisors $K_X+\Delta+\epsilon_i D_i$ over $\PP^n$; in both cases this consists of finitely many flops. So the two edges of $\nef{X_\alpha/\PP^n}$ correspond to flopping contractions. 

So far we have proved that each SQM $X_\alpha$ has exactly 2 flopping contractions. These give 2 extremal rays $R_1$, $R_2$ of the cone of curves $\overline{\Curv}(X_\alpha)$ lying in the hyperplane $K_X^\perp$. Since $\rho(X)=3$, $K_X^\perp$ is a 2-dimensional vector space, and so these two rays are the only extremal rays of $\overline{\Curv}(X_\alpha)$ in $K_X^\perp$. If $\overline{\Curv}(X_\alpha)$ has infinitely many $K$-negative extremal rays $R_i$, then by the Cone Theorem they must converge to a ray $R$ in $K_X^\perp$ (although $R$ need not be extremal). On the other hand, since there are no flipping contractions, each $R_i$ must correspond to a divisorial contraction or fibration of $X$. So if we pick an effective divisor $D$ such that $D \cdot R_1 <0$ and $D \cdot R_2 <0$, then we can have at most finitely many $i$ for which $D \cdot R_i <0$. As in Figure \ref{accum}, we then see that the $R_i$ must converge to a ray in $K_X^\perp$ which is strictly outside the cone $\langle R_1, R_2 \rangle$. This contradicts the fact that $R_1$ and $R_2$ are extremal rays of $\overline{\Curv}(X_\alpha)$. Therefore $\overline{\Curv}(X_\alpha)$, and dually $\nef{X_\alpha}$ has only finitely many extremal rays.

We have proved that $\nef{X_\alpha}$ is a rational polyhedral cone. It remains to show that it is spanned by semi-ample divisors; in particular this will imply that $\nef{X_\alpha}=\Nef{X_\alpha}$ as claimed. One extremal ray of $\nef{X_\alpha}$ is spanned by $-K_X$, which is basepoint-free, so we need to prove the claim for the other extremal rays of the nef cone. To do this, we use the Basepoint-Free Theorem \cite[Theorem 3.3]{KollarMori}. That theorem says that if $D$ is a nef line bundle and $D-K_X$ is nef and big, then $D$ is semi-ample. Now let $D$ be a nef line bundle on $X_\alpha$, not a multiple of $-K_X$: looking at Figure \ref{nefconesA} again, we see that $D-K_X$ must lie somewhere in the interior of $\mov{X}$, and hence be big, as required. 
\end{proof}

\begin{figure}
\centering
\begin{tikzpicture}[scale=2]

\draw[dashed][pattern=dots] (0,0) -- (-1,1.7) -- (-0.5,2) --cycle;
\draw[very thick] (0,0) -- (-1,1.7)  [shift={(0.4,-0.2)}]   node{$X_1$};
\draw[dashed][pattern=dots] (0,0) -- (1,1.7) -- (0.5,2) --cycle;
\draw[very thick] (0,0) -- (1,1.7)  [shift={(-0.4,-0.2)}]   node{$X_2$};
\draw[very thick][fill=lightgray] (0,0) -- (-0.5,2) -- (0.5,2) -- cycle ;
\draw (0,1.5) node{$X$};

\draw[dotted] (-1.05,1.65) -- (-1.2,1.3);
\draw[dotted] (-0.7,1) -- (-0.8,0.8); 
\draw[dashed] (1,1.7) -- (0.5,2);
\draw[dotted] (1.05,1.65) -- (1.2,1.3);
\draw[dotted]  (0.7,1) -- (0.8,0.8);
\draw[very thick] (0.5,2) -- (0,0) [shift={(0,-0.2)}] node{$-K$};

\draw[dotted](1.35,0.45) -- (1.35,0.3);
\draw[dashed][pattern=dots] (1.35,0.5) -- (1.3,1.1) -- (0,0) -- cycle;
\draw[very thick] (0,0) -- (1.3,1.1) ;
\draw[very thick] (0,0) -- (1.35,0.5) [shift={(-0.3,0.1)}] node{$X_\alpha$};
\end{tikzpicture} \caption{Nef cones inside the movable cone}  \label{nefconesA}
\end{figure}
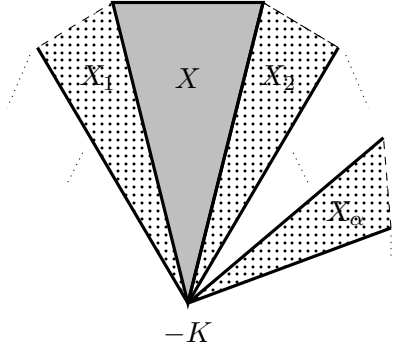

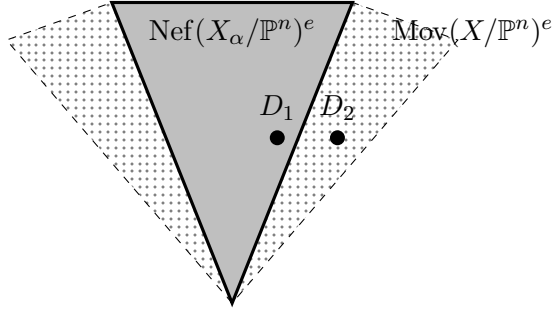
\begin{figure}
\centering
\begin{tikzpicture}[scale=2]

\draw[dashed][pattern=dots][pattern color=gray] (0,0) -- (-1.5,1.75) --(-0.8,2) -- (0.8,2) -- (1.5,1.75) --cycle;
\draw (1.6,1.8) node{$\Mov{X/\PP^n}$} ;
\draw[very thick][fill=lightgray] (0,0) -- (-0.8,2) --(0.8,2) --cycle;
\draw (0,1.8) node{$\Nef{X_\alpha/\PP^n}$};
\draw (0.3,1.1) node[circle,fill,inner sep=2pt,label=above:$D_1$]{};
\draw (0.7,1.1) node[circle,fill,inner sep=2pt,label=above:$D_2$]{};
\end{tikzpicture} \caption{Relative movable cone}  \label{relmov}
\end{figure}

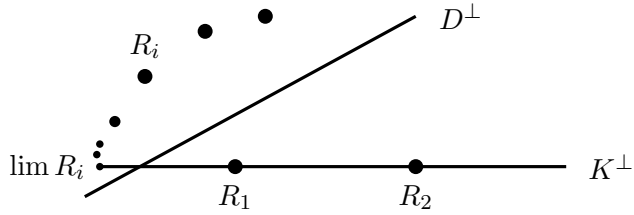
\begin{figure} 
\centering
\begin{tikzpicture}[scale=2]
\draw[very thick] (-1.11,0) -- (2,0)  [shift={(0.3,-0.0)}] node{$K^\perp$};
\draw[very thick] (-1.2,-0.2) -- (1,1)  [shift={(0.3,-0.0)}] node{$D^\perp$};
\draw (-0.2,0) node[circle,fill,inner sep=2pt,label=below:$R_1$]{};
\draw (1,0) node[circle,fill,inner sep=2pt,label=below:$R_2$]{};
\draw (0,1) node[circle,fill,inner sep=2pt]{};
\draw (-0.4,0.9) node[circle,fill,inner sep=2pt]{};
\draw (-0.8,0.6) node[circle,fill,inner sep=2pt,label=above:$R_i$]{};
\draw (-1.0,0.3) node[circle,fill,inner sep=1.5pt]{};
\draw (-1.1,0.15) node[circle,fill,inner sep=1pt]{};
\draw (-1.12,0.08) node[circle,fill,inner sep=1pt]{};
\draw (-1.1,0) node[circle,fill,inner sep=1pt,label=left:$\operatorname{lim} R_i$]{};
\end{tikzpicture} \caption{Accumulation of rays} \label{accum}
\end{figure}

\subsection{Branched double cover of $\PP^2 \times \PP^2$}
In this section we will use Theorem \ref{theorem-ratpoly} to deduce the Cone Conjecture for $X$ a very general blowup of the double cover of $\PP^2 \times \PP^2$ branched along a divisor of type $(2,2)$. The key point is to prove that $\PsAut(X/\PP^2)$ is infinite.

Let $Z$ be the double cover of $\PP^2 \times \PP^2$ branched along a divisor of type $(2,2)$. Then the Picard group of $Z$ is generated by $L_1, L_2$, the pullbacks of the hyperplane classes from the two projections to $\PP^2$, subject to the relations $$L_1^3 = L_2^3 = 0, \quad L_1^2 L_2^2 = 2.$$ Let $V$ be a two dimensional linear system in $|H|=| L_1 + L_2|$. Then the base locus of $V$ is a smooth curve of genus $7$. Let $X$ be the blowup of $Z$ along the base locus of $V$. Then $X$ admits a fibration  to $\PP^2$ whose generic fiber is a $K3$ surface $X_{\eta}$. The Picard group of $X_{\eta}$ is generated by $\lambda_1, \lambda_2$, the restrictions of $L_1$ and $L_2$, respectively.

We apply Corollary \ref{corollary-torellispecial} to show that $\Aut^*(X_\eta)$ is infinite, and hence that $\Nef{X_\eta}=P(X_\eta)$. We must show that the Picard group does not get bigger when we pass to the geometric generic fiber,  the base change of $X_\eta$ to the algebraic closure. We use the following lemma. A reference is \cite[Lemma 2.1]{Vial}; we give a proof here for convenience
 
\begin{lemma} \label{prop-steinitz}
Let $K$ denote the algebraic closure of the function field $\CC(t_1,t_2)$ of $\PP^2$, and let $X_{\overline{\eta}}$ denote the geometric generic fiber of $f$. Then there is a countable union of proper subvarieties $U = \cup_i U_i$ in $\PP^2$ such that for $p \in \PP^2 \setminus U$, the variety $X_{\overline{\eta}}$ is isomorphic as a scheme to the fiber $X_p = f^{-1}(p)$. In particular, for $p \in \PP^2 \setminus U$ we have an isomorphism $\Pic (X_p) \simeq \Pic (X_{\overline{\eta}})$. 
\end{lemma}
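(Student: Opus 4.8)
The family $f : X \to \PP^2$ already provides the spreading-out we need: its generic fiber $X_\eta$ is a $K3$ surface over $\CC(t_1,t_2)$, the geometric generic fiber is the base change $X_{\overline{\eta}} = X_\eta \times_{\CC(t_1,t_2)} K$, and the fiber over a closed point $p$ is $X_p$. Thus the assertion is an instance of the general principle that, in a family over an irreducible $\CC$-variety, a very general closed fiber is isomorphic \emph{as an abstract scheme} to the geometric generic fiber, with the exceptional set a countable union of proper subvarieties. The plan is to reprove this principle (it matches \cite[Lemma 2.1]{Vial}) by a Steinitz-type field-automorphism argument combined with a countability argument producing $U$, and then to read off the statement about Picard groups.

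For the principle itself, I would realize the family by a classifying map $m : \PP^2 \to M$ to a Hilbert scheme (equivalently a moduli space of polarized $K3$ surfaces), which is a scheme defined over $\QQ$; then $X_p$ has moduli point $m(p)$, while $X_{\overline{\eta}}$ corresponds to the generic point of the irreducible $\CC$-variety $\overline{m(\PP^2)}$. Two ingredients drive the argument. First, $K$ and $\CC$ are abstractly isomorphic --- both are algebraically closed of characteristic zero and of transcendence degree over $\QQ$ equal to the cardinality of the continuum --- and more generally two points of the $\QQ$-scheme $M$ parametrize abstractly isomorphic fibers precisely when they are conjugate under the action of field automorphisms. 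Second, the set of closed points $p$ whose moduli point satisfies an algebraic relation over $\overline{\QQ}$ that is \emph{not} already satisfied at the generic point of $\overline{m(\PP^2)}$ is a countable union $U = \cup_i U_i$ of proper closed subvarieties of $\PP^2$, one contribution for each proper $\overline{\QQ}$-subvariety of $M$. For $p \notin U$ the moduli point $m(p)$ is then conjugate, under a suitable field automorphism $\sigma$, to the moduli point of $X_{\overline{\eta}}$, and $\sigma$ exhibits an isomorphism of abstract schemes $X_p \cong X_{\overline{\eta}}$.

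Finally, an isomorphism of abstract schemes induces an isomorphism of Picard groups, so $\Pic(X_p) \cong \Pic(X_{\overline{\eta}})$ for all $p \notin U$, as required. The main obstacle is the bookkeeping in the second ingredient: one must organize the countably many $\overline{\QQ}$-loci and check that avoiding them genuinely forces field-automorphism-conjugacy of the moduli points --- hence an honest scheme isomorphism --- rather than merely equality of some discrete invariants. Since only the consequence for Picard groups is used in the sequel, if this step proves delicate I would fall back on the following weaker but sufficient argument: the finitely many generators of the finitely generated free group $\Pic(X_{\overline{\eta}})$ spread out to line bundles over a dense open subset of $\PP^2$, whose restrictions give an intersection-form-preserving injection $\Pic(X_{\overline{\eta}}) \hookrightarrow \Pic(X_p)$ for every $p$ in that open set, while the Picard rank of $X_p$ jumps strictly only on a countable union of Noether--Lefschetz loci; off the union of these loci with the complementary closed set, the injection is an isomorphism. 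I would present the scheme-isomorphism version to match the stated lemma, but record this shortcut as it already suffices for the application.
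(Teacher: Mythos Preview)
Your proposal is correct and arrives at the same conclusion via the same underlying mechanism (Steinitz' theorem plus a countability argument), but the packaging differs from the paper's in a way worth noting.

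The paper argues directly and elementarily, without any moduli machinery. It lets $k \subset \CC$ be the countable subfield generated by the coefficients of the divisors in $V$, takes $U$ to be the set of points $(z_1,z_2)$ that are \emph{not} algebraically independent over $k$ (manifestly a countable union of proper subvarieties), and then for $p=(z_1,z_2)\notin U$ builds an explicit isomorphism $\alpha:\CC\simeq K$ fixing $k$ with $\alpha^{-1}(t_i)=z_i$. Base-changing $X_{\overline\eta}$ along $\alpha$ literally substitutes $z_i$ for $t_i$ in the defining equations, yielding $X_p$. The Picard statement follows as you say.

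Your route through a classifying map to a Hilbert scheme $M/\QQ$ and $\overline{\QQ}$-subvarieties is more conceptual and also works, but it introduces extra bookkeeping you yourself flag: one must check that lying in the same $\overline{\QQ}$-loci really forces conjugacy of the points (it does), and one should note that ``conjugate moduli points $\Rightarrow$ isomorphic fibers'' is the direction you need (the converse can fail for Hilbert schemes, since they parametrize embedded rather than abstract schemes). The paper's field-of-coefficients argument sidesteps all of this: no moduli space, no universal family, and the countable set $U$ is described in one line. Your fallback Picard-only argument via spreading out generators and Noether--Lefschetz is fine for the application, but since the paper's direct proof is shorter than either of your two versions, that is the one to present.
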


\begin{proof}
For simplicity we restrict to an affine patch $\bbA^2 \in \PP^2$ and take the $t_i$ to be coordinate functions on this affine space.

The key point of the proof is Steinitz' Theorem that two algebraically closed fields of the same uncountable cardinality are isomorphic. In particular, the field $K$ is isomorphic to the complex numbers $\CC$; moreover, the isomorphism can be chosen to identify any given transcendence bases of the two fields.  

Now let $k \subset \CC$ denote the (countable) subfield generated by all the coefficients of the divisors $D \in V$ in our linear system. Let $z_1,\ldots,z_2$ be complex numbers algebraically independent over $k$; note that the set of points $(z_1,z_2)$ for which this does not hold is a countable union $U= \cup_i U_i$ of proper subvarieties. Then there is  an isomorphism $\alpha: \CC \simeq K$ that fixes each element of $k$ and such that $\alpha^{-1}(t_i) = z_i$. Now consider the base change of $X_{\overline{\eta}}$ along $\alpha$: this yields the complex variety $\widetilde{X
} := X_{\overline{\eta}} \times_K \CC$. In terms of equations, $\widetilde{X}$ is obtained by replacing the indeterminates $t_i$ by the numbers $z_i$ in the equations of $X_{\overline{\eta}}$, and therefore is isomorphic to the fiber $X_p$ where $p=(z_1,z_2)$, as claimed.

The final statement follows since the  Picard group is (as an abelian group) determined up to isomorphism by the scheme structure, for example because it can be defined as the cohomology group $H^1(X,\mathcal O^\times)$. 
\end{proof}

By Ravindra--Srinivas \cite[Theorem 1]{RS}  a very general $K3$ surface $S$ obtained as a complete intersection of sections of $H$ has Picard rank 2. (Their theorem is stated in terms of divisors on 3-folds, but the Lefschetz hyperplane theorem applied to a smooth 3-fold  $V$ containing $S$ shows this is equivalent.) Combining this with the previous lemma gives the result we need: 
\begin{corollary} \label{corollary-nl}
Assume that the linear system $V \subset |H|$ defining our base locus is very general, meaning that the fiber of $f: X \arrow \PP^2$ over a very general point of $\PP^2 = V^*$ is a $K3$ surface of Picard number 2. Then the geometric generic fiber $X_{\overline{\eta}}$ has Picard number 2, and hence $\Pic(X_\eta)=\Pic(X_{\overline{\eta}})$.
\end{corollary}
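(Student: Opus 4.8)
The plan is to read off the Picard number of the geometric generic fiber $X_{\overline\eta}$ from the very general \emph{special} fibers $X_p$, transporting the computation along the scheme isomorphism of Lemma~\ref{prop-steinitz}, and then to upgrade equality of Picard \emph{numbers} to equality of Picard \emph{groups} using the full Ravindra--Srinivas theorem. First I would record the left-hand side: by Proposition~\ref{prop-restriction}(i) the restriction isomorphism $N^1(X/\PP^2)\iso N^1(X_\eta)$ shows that $X_\eta$ has Picard number exactly $2$, with $\Pic(X_\eta)$ generated by $\lambda_1,\lambda_2$. Base change to the algebraic closure gives an injection $\Pic(X_\eta)\hookrightarrow\Pic(X_{\overline\eta})$ of (torsion-free) lattices, so it remains to control the right-hand side.

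Next I would compute $\rho(X_{\overline\eta})$. Since $V$ is very general, Ravindra--Srinivas \cite{RS} (via the Lefschetz remark following the statement) guarantees that the fiber $X_p$ over a very general $p\in\PP^2$ is a complete-intersection $K3$ surface whose Picard group is the restriction of $\Pic(Z)$, in particular of rank $2$; the locus where this fails is a countable union of proper subvarieties of $\PP^2$. Lemma~\ref{prop-steinitz} supplies a second countable union $U=\bigcup_i U_i$ outside of which $X_{\overline\eta}\iso X_p$ as schemes. As $\CC$ is uncountable, the complement of the union of these two countable families is nonempty, so I may pick a single very general $p$ lying in both good loci. For that $p$, Lemma~\ref{prop-steinitz} gives $\Pic(X_{\overline\eta})\iso\Pic(X_p)$ and Ravindra--Srinivas gives $\rho(X_p)=2$, hence $\rho(X_{\overline\eta})=2$.

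The last step, which I expect to be the main obstacle, is to pass from equality of Picard numbers to the asserted equality $\Pic(X_\eta)=\Pic(X_{\overline\eta})$ of Picard groups. A priori the injection above only exhibits $\Pic(X_\eta)$ as a finite-index sublattice of the rank-$2$ lattice $\Pic(X_{\overline\eta})$, and the bare count $\rho=2$ on both sides does not by itself exclude a proper overlattice. To close this gap I would invoke the \emph{generation} form of Ravindra--Srinivas rather than the mere rank statement: under the scheme isomorphism $X_{\overline\eta}\iso X_p$ the generators $\lambda_1,\lambda_2$ correspond to the restrictions of $L_1,L_2\in\Pic(Z)$ to $X_p$, and the theorem asserts that the restriction map $\Pic(Z)\to\Pic(X_p)$ is an \emph{isomorphism}. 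Thus $\lambda_1,\lambda_2$ already generate $\Pic(X_p)\iso\Pic(X_{\overline\eta})$ integrally, and since they also generate $\Pic(X_\eta)$, the inclusion $\Pic(X_\eta)\hookrightarrow\Pic(X_{\overline\eta})$ is forced to be an equality. The crux is precisely this saturation: one must see that no new class splitting an integral combination of $\lambda_1,\lambda_2$ appears over $\overline{K}$, which is exactly the Noether--Lefschetz content of the theorem beyond the rank assertion.
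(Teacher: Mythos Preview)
Your argument is correct and follows exactly the route the paper intends: combine Lemma~\ref{prop-steinitz} (the Steinitz identification $X_{\overline\eta}\iso X_p$ for suitable very general $p$) with the Ravindra--Srinivas Noether--Lefschetz theorem on the fiber $X_p$, then read back to $X_{\overline\eta}$.

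The one place you go beyond the paper is worth noting. The paper records only that $\rho(X_{\overline\eta})=2$ and passes directly to ``hence $\Pic(X_\eta)=\Pic(X_{\overline\eta})$''; for the downstream application (Corollary~\ref{corollary-torellispecial}) only the equality $N^1(X_\eta)=N^1(X_{\overline\eta})$ of real N\'eron--Severi spaces is needed, and that follows already from the rank statement together with the injection $\Pic(X_\eta)\hookrightarrow\Pic(X_{\overline\eta})$. You observe, correctly, that the literal claim about Picard \emph{groups} requires knowing the sublattice is saturated, and you supply the missing step: the Steinitz isomorphism fixes the coefficient field $k$ over which $L_1,L_2$ are defined, so it carries the classes $\lambda_1,\lambda_2\in\Pic(X_{\overline\eta})$ to the restrictions $(L_i)_{|X_p}$, and the full Ravindra--Srinivas statement (that $\Pic(Z)\to\Pic(X_p)$ is surjective, not merely of maximal rank) then gives integral generation. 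This is a genuine clarification of what the paper leaves implicit.
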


\begin{proposition}
In the basis $\lambda_1, \lambda_2$, the intersection matrix is 
\begin{align*}
\begin{pmatrix}
 2 &  4 \\
 4 &  2
\end{pmatrix}
\end{align*}
The orthogonal group $O(\Pic(X))$ and the automorphism group $\Aut^*(X_\eta)$ are infinite. Hence $\Nef{X}=P(X)=\langle a\lambda_1 + b \lambda_2 \mid a = b \left(-2 \pm \sqrt{3} \right) \rangle$. 
\end{proposition}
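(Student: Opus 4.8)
The plan is to establish the three assertions of the proposition in turn: the intersection matrix, the infinitude of the two groups, and the resulting description of the nef cone. The first and third are essentially computations, while the second carries the geometric content.

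First I would compute the intersection form. Since the generic fiber $X_\eta$ is isomorphic to the complete intersection of two general members of $V \subset |H|$ with $H = L_1 + L_2$, the intersection number of two restricted classes is computed on $Z$ via $\lambda_i \cdot \lambda_j = L_i \cdot L_j \cdot (L_1 + L_2)^2$. Expanding $(L_1+L_2)^2 = L_1^2 + 2L_1L_2 + L_2^2$ and using the relations $L_1^3 = L_2^3 = 0$ and $L_1^2 L_2^2 = 2$, the only surviving monomial in each case is a multiple of $L_1^2 L_2^2$, which gives $\lambda_1^2 = \lambda_2^2 = 2$ and $\lambda_1 \cdot \lambda_2 = 4$, as claimed.

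Next, for the groups: the form has determinant $2\cdot 2 - 4^2 = -12 < 0$, so $\Pic(X_\eta)$ is an indefinite even lattice of signature $(1,1)$. A nondegenerate indefinite integral binary quadratic form has infinitely many integral automorphs --- this is classical, equivalent to the existence of a nontrivial solution of the associated Pell equation --- so $O(\Pic(X_\eta))$ is infinite. To pass from the lattice to actual automorphisms I would invoke the final clause of Corollary \ref{corollary-torellispecial}, which requires two inputs: that there are no $(-2)$-classes, and that the Picard group does not grow under base change to the algebraic closure. The latter is precisely Corollary \ref{corollary-nl}, valid under the very general hypothesis. For the former I would compute $(a\lambda_1 + b\lambda_2)^2 = 2(a^2 + 4ab + b^2)$; setting this equal to $-2$ and completing the square reduces to $(a+2b)^2 - 3b^2 = -1$, i.e. the Pell equation $k^2 - 3b^2 = -1$, which has no integral solution because $-1$ is not a square modulo $3$. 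Hence there are no $(-2)$-classes, and Corollary \ref{corollary-torellispecial} yields that $\Aut^*(X_\eta)$ is infinite. Finally, since $\Pic(X_\eta)$ contains no $(-2)$-class it contains no nodal class, so $X_\eta$ carries no $(-2)$-curves and its nef cone coincides with the closure of the positive cone $P(X_\eta)$. The boundary rays of $P(X_\eta)$ are the isotropic directions $(a\lambda_1 + b\lambda_2)^2 = 0$, i.e. $(a+2b)^2 = 3b^2$, giving $a = b(-2 \pm \sqrt{3})$, exactly the two rays in the statement; and the interior consists of big nef classes, which are effective by Riemann--Roch on the K3 surface (a nef $D$ with $D^2>0$ has $h^2(D)=h^0(-D)=0$ since $-D\cdot D<0$ contradicts nefness, so $h^0(D)\geq 2+D^2/2>0$). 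Thus $\Nef{X_\eta} = P(X_\eta)$ as claimed.

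The step I expect to be the main obstacle --- or at least the one demanding the most care --- is the passage from the infinitude of the lattice automorphism group $O(\Pic(X_\eta))$ to the infinitude of the geometric automorphism group $\Aut^*(X_\eta)$. This is where the global architecture of the argument does its work: it relies on the very general hypothesis through Corollary \ref{corollary-nl} to guarantee $\Pic(X_\eta) = \Pic(X_{\overline{\eta}})$, and on the vanishing of $(-2)$-classes (the Pell computation $k^2-3b^2=-1$) to make the Torelli-type Corollary \ref{corollary-torellispecial} applicable. By comparison, the intersection matrix and the isotropic rays are routine.
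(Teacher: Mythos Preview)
Your proposal is correct and follows essentially the same approach as the paper: compute the intersection form on $Z$, show there are no $(-2)$-classes, show $O(\Pic(X_\eta))$ is infinite, and then invoke Corollary~\ref{corollary-torellispecial} together with Corollary~\ref{corollary-nl}. The only differences are cosmetic: the paper rules out $(-2)$-classes by a parity argument modulo $4$ and $8$ rather than your Pell equation $k^2-3b^2=-1$ modulo $3$, and it exhibits an explicit infinite-order element $\begin{pmatrix}15&4\\-4&-1\end{pmatrix}$ of $O(\Pic(X_\eta))$ rather than appealing to the general theory of automorphs of indefinite binary forms.
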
 

\begin{proof}
To calculate the intersection pairing, we compute
$$\lambda_1^2 = L_1^2(L_1 + L_2)^2 = 2, \quad \lambda_1 \lambda_2 = L_1 L_2(L_1 + L_2)^2 = 4, \quad \lambda_2^2 = L_2^2(L_1 + L_2)^2 = 2.$$
A $(-2)$-class $a \lambda_1 + b \lambda_2$ satisfies $2(a+b)^2 + 4 ab = -2.$ If $a+b$ is even, then the left hand side is divisible by $4$ whereas the right hand side is not. We conclude that $a+b$ must be odd. Without loss of generality, assume $a$ is even and $b$ is odd. Reducing the equation modulo 8, we see that the left hand side is $2$ and the right hand side is $-2$. We conclude that the equation has no solutions. Hence, $\Pic(X)$ does not have any $(-2)$-classes.

Let $\alpha: \Pic(X) \arrow \Pic(X)$ be the linear map determined by the matrix
\begin{align*}
\begin{pmatrix}
15 & 4 \\
-4 & -1
\end{pmatrix}
\end{align*}
Then it is an easy calculation to show that $\alpha \in O(\Pic(X))$; it has infinite order because, for example, its eigenvalues are not roots of unity. Since there are no $(-2)$-classes, Corollary \ref{corollary-torellispecial} gives the rest.
\end{proof}

Now we can complete the proof of the cone conjecture for $X$.

First observe that every SQM $X_\alpha$ of $X$ is obtained from $X$ by a sequence of flops by Theorem \ref{theorem-noflips}, and  every effective nef cone $\Nef{X_\alpha}$ is rational polyhedral by Theorem \ref{theorem-ratpoly}.

Now $\rho(Z)=2$, so $\rho(X/\PP^2)=2$, and so $\Nef{X/\PP^2}=\Nef{X_\eta}$ is a cone in a 2-dimensional vector space. Since $\PsAut^*(X/\PP^2) =\Aut^*(X_\eta)$ is infinite, we can find a rational polyhedral covering domain $P \subset \Nef{X_\eta}$. By running the MMP for two classes ``just outside" the two boundary rays of $P$, we see that there is a finite collection of SQMs $\{X_i\}_{i=1}^k$ such that $P$ is contained in the union of the relative nef  cones $\Nef{X_i/\PP^2}$. Now let $\Pi$ be the union of the absolute nef cones $\bigcup_i \Nef{X_i}$. Each nef cone is rational polyhedral, the class $-K_X$ is nef on each $X_i$ since they are obtained from $X$ by flops, and all their codimension-1 faces inside the big cone are dual to classes of $K$-trivial curves because no $X_i$ has a flipping contraction. So all the conditions of Theorem \ref{theorem-lifting} are fulfilled, and hence the cone conjecture is true for $X$.

\subsection{$\PP^3 \times \PP^3$}
In this section, we study the example of $Z = \PP^3 \times \PP^3$.

The Chow ring of $\PP^3 \times \PP^3$ is isomorphic to $$\ZZ [L_1, L_2]/ (L_1^4, L_2^4),$$ where $L_1$ and $L_2$ denote the pullback of the hyperplane classes via the two projections. Let $H=L_1 + L_2$ and let $V \subseteq |H|$ be a general $4$-dimensional linear system. Then the base locus $Bs(V)$ is a smooth curve $C$ with class $10L_1^2L_2^3 + 10 L_1^3 L_2^2$.  By adjunction, the genus of $C$ is $11$. Blowing up $C$, we get a smooth variety $X$ with a fibration $X \arrow \PP^4$ whose generic fiber $X_\eta$ is a $K3$ surface of class $4L_1^3L_2 + 6 L_1^2 L_2^2 + 4L_1L_2^3$. A basis for $\Pic(X_\eta)$ is the classes $\lambda_1$ and $\lambda_2$ which are the restrictions of $L_1$ and $L_2$, respectively. We denote the exceptional divisor of the blowup by $F$.

As in the previous case, using the analogue of Lemma \ref{prop-steinitz} and Noether--Lefschetz we again obtain:
\begin{proposition}
Assume the linear system $V \subset |H|$ is very general. Then $\Pic(X_\eta)=\Pic(X_{\overline{\eta}})$. 
\end{proposition}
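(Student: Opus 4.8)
The plan is to reproduce, essentially verbatim, the two-step argument used in the branched-double-cover case, changing only the base $\PP^2$ to $\PP^4$. First I would establish the exact analogue of Lemma~\ref{prop-steinitz}: there is a countable union $U = \bigcup_i U_i$ of proper subvarieties of $\PP^4$ such that for every $p \in \PP^4 \setminus U$ the geometric generic fiber $X_{\overline{\eta}}$ is isomorphic, as a scheme, to the closed fiber $X_p$, and in particular $\Pic(X_p) \iso \Pic(X_{\overline{\eta}})$. The argument is word-for-word that of Lemma~\ref{prop-steinitz}: working on an affine patch $\bbA^4 \subset \PP^4$ with coordinates $t_1,\dots,t_4$, one invokes Steinitz' theorem that $\CC$ and the algebraic closure $K$ of $\CC(t_1,\dots,t_4)$, being algebraically closed of the same uncountable cardinality, are isomorphic by an isomorphism matching prescribed transcendence bases. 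Choosing $z_1,\dots,z_4 \in \CC$ algebraically independent over the countable field $k$ generated by the coefficients of the members of $V$ produces an isomorphism $\alpha\colon \CC \iso K$ fixing $k$ with $\alpha^{-1}(t_i)=z_i$; base change of $X_{\overline{\eta}}$ along $\alpha$ recovers $X_p$ with $p=(z_1,\dots,z_4)$, and the excluded locus is exactly the countable union $U$ where the $z_i$ fail to be algebraically independent over $k$. Nothing here depends on the dimension of the base, so the passage from $\PP^2$ to $\PP^4$ is free.

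Second, I would apply the Noether--Lefschetz theorem of Ravindra--Srinivas \cite{RS} to pin down $\Pic(X_p)$ for very general $p$. The fibers of $f$ are smooth complete intersections of four general divisors of type $(1,1)$ in $\PP^3 \times \PP^3$, hence $K3$ surfaces. Intersecting only three of these divisors yields a smooth threefold $W \subset \PP^3 \times \PP^3$ containing $X_p$ as a member of the very ample system $|\OO_W(1,1)|$; the Grothendieck--Lefschetz theorem (valid since $\dim W = 3$) gives $\Pic(W) = \ZZ L_1 \oplus \ZZ L_2$, so that \cite[Theorem~1]{RS} applies and shows that for $X_p$ very general the restriction $\Pic(W) \arrow \Pic(X_p)$ is an isomorphism. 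Thus a very general fiber has $\Pic(X_p) = \langle \lambda_1, \lambda_2 \rangle$ of rank $2$, where $\lambda_i = L_i|_{X_p}$. This very-general condition again excludes only a countable union of proper subvarieties of $\PP^4$, so it is compatible with the locus $U$ from the first step.

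Finally I would combine the two inputs. The classes $\lambda_1,\lambda_2$ are defined over the function field and already form a rank-$2$ sublattice of $\Pic(X_\eta)$, which injects into $\Pic(X_{\overline{\eta}})$ via the base-change map for the geometrically integral surface $X_\eta$. Taking $p$ outside the union of the two countable families gives $\Pic(X_{\overline{\eta}}) \iso \Pic(X_p) = \langle \lambda_1, \lambda_2 \rangle$, where the identification carries the geometric classes $\lambda_i$ to $L_i|_{X_p}$ because the $L_i$ are ambient classes fixed by $\alpha$. Hence the inclusion $\Pic(X_\eta) \hookrightarrow \Pic(X_{\overline{\eta}})$ is a surjection between lattices both equal to $\langle \lambda_1, \lambda_2 \rangle$, and therefore an equality, as claimed. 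The only genuinely substantive point — and the step I expect to require the most care — is the second one: confirming that Ravindra--Srinivas applies to complete intersections inside the product $\PP^3 \times \PP^3$ (where the ambient Picard rank is $2$ rather than $1$) through the auxiliary threefold $W$, and that its conclusion is Picard rank \emph{exactly} $2$, with the restriction map surjective, rather than merely rank $\geq 2$; the Steinitz step and the final bookkeeping are routine.
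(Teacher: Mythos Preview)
Your proposal is correct and follows exactly the paper's approach: the paper's proof is a one-line reference to the previous case, saying ``using the analogue of Lemma~\ref{prop-steinitz} and Noether--Lefschetz we again obtain'' the result, and you have faithfully unpacked both steps (Steinitz over $\PP^4$ instead of $\PP^2$, then Ravindra--Srinivas via the auxiliary threefold $W$ with Lefschetz hyperplane). Your flagged concern about Ravindra--Srinivas in ambient Picard rank $2$ is precisely what the paper addresses parenthetically in Corollary~\ref{corollary-nl} via the threefold-plus-Lefschetz reduction, and your handling of it is the same.
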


\begin{proposition}
In the basis $\{\lambda_1,\lambda_2\}$, the intersection matrix on $\Pic(X_\eta)$ is
\begin{align*}
\begin{pmatrix}
4 & 6 \\
6 & 4
\end{pmatrix}
\end{align*}
There are no $(-2)$-classes. The groups $O(\Pic(X))$ and $\Aut^*(X_\eta)$ are infinite. Hence $\Nef{X}=P(X)=\langle a\lambda_1+b\lambda_2 \mid a~=~\frac{b}2 \left(-3 \pm \sqrt{5} \right) \rangle$.
\end{proposition}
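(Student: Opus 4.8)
The plan is to follow the same template as the branched double cover case, adjusting the numerics. \textbf{First}, I would read off the intersection matrix exactly as in the preceding lemmas. Since the generic fiber is the proper transform of a complete intersection of four members of $|H|=|L_1+L_2|$, its class in the Chow ring of $\PP^3\times\PP^3$ is $[X_\eta]=(L_1+L_2)^4 = 4L_1^3L_2 + 6L_1^2L_2^2 + 4L_1L_2^3$, using $L_1^4=L_2^4=0$. Capping with $L_1^2$, $L_1L_2$, $L_2^2$ and the normalization $L_1^3L_2^3=1$ then gives $\lambda_1^2=\lambda_2^2=4$ and $\lambda_1\lambda_2=6$, which is the asserted matrix. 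The self-intersection of a general class is $(a\lambda_1+b\lambda_2)^2 = 4(a^2+3ab+b^2)$; this is divisible by $4$, so it can never equal $-2$, and hence there are no $(-2)$-classes on $X_\eta$.

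\textbf{Next} I would establish that $O(\Pic(X))$ is infinite. The associated form $a^2+3ab+b^2$ is nondegenerate and indefinite of discriminant $9-4=5$, so its group of integral automorphs is governed by the units of the order of discriminant $5$ in $\QQ(\sqrt 5)$ and is therefore infinite (this is the classical Pell/automorph theory); concretely one can exhibit a single integral isometry whose eigenvalues are not roots of unity, exactly as with the matrix used in the branched double cover case. Since the orthogonal group is scale-invariant, replacing the form by its fourth multiple does not change this conclusion.

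\textbf{Then} I would deduce that $\Aut^*(X_\eta)$ is infinite by invoking Corollary \ref{corollary-torellispecial}. Its hypothesis $N^1(X_\eta)=N^1(X_{\overline\eta})$ is supplied by the preceding very general Noether--Lefschetz statement, which guarantees $\Pic(X_\eta)=\Pic(X_{\overline\eta})$ of rank $2$. Because there are no $(-2)$-classes, the set of nodal classes is empty, so the first Torelli condition is automatic and the corollary shows that $\Aut^*(X_\eta)$ contains a finite-index subgroup of the infinite group $O(\Pic(X))$; in particular it is infinite.

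\textbf{Finally}, the identification $\Nef{X_\eta}=P(X_\eta)$ follows from the structure of the nef cone of a $K3$ surface: the nef cone is cut out of the closed positive cone by the walls perpendicular to the $(-2)$-curves, and since there are none here the nef cone is the whole (closed) positive cone. Its two boundary rays are the isotropic rays $a^2+3ab+b^2=0$, i.e. $a=\tfrac{b}{2}(-3\pm\sqrt 5)$, giving the stated description. \textbf{The main obstacle} is not any single computation---each of the above is routine---but the passage from lattice isometries to honest surface automorphisms: this is exactly where the Global Torelli input (Corollary \ref{corollary-torellispecial}) and the very general hypothesis controlling the geometric generic Picard lattice are essential, since without them one could only conclude that $O(\Pic(X))$, rather than $\Aut^*(X_\eta)$, is infinite.
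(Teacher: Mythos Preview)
Your proposal is correct and follows essentially the same approach as the paper: compute the intersection numbers via $L_i^2(L_1+L_2)^4$ and $L_1L_2(L_1+L_2)^4$, rule out $(-2)$-classes by the divisibility-by-$4$ argument, and then invoke Corollary~\ref{corollary-torellispecial} (using the very general Noether--Lefschetz input) to pass from the infiniteness of $O(\Pic(X_\eta))$ to that of $\Aut^*(X_\eta)$. The only cosmetic difference is that the paper exhibits the explicit isometry $\begin{pmatrix}21 & 8\\ -8 & -3\end{pmatrix}$ rather than appealing to the general automorph theory of indefinite binary forms, but you note this option yourself.
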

\begin{proof}
We calculate the intersection matrix as follows:
\begin{align*}
\lambda_i^2 &= L_i^2 \cdot (L_1+L_2)^4 = 4 \quad (i=1,2);\\
\lambda_1\lambda_2 &= L_1L_2 \cdot (L_1+L_2)^4 =6
\end{align*}
since $L_i^4=0$ and $L_i^3L_j^3=1$ for $i \neq j$. 

If $a\lambda_1+b\lambda_2$ is a $(-2)$-class in $\Pic(X)$, then $(a,b)$ is an integer solution of 
\begin{align*}
4a^2+12ab+4b^2=-2.
\end{align*}
This is impossible since the left-hand side is divisible by 4 and the right hand side is not.

To see that $O(\Pic(X))$ is infinite, we can take for example the linear map $\alpha: \Pic(X) \arrow \Pic(X)$ determined by the matrix
\begin{align*}M&=
 \begin{pmatrix}
21 & 8 \\
-8 & -3
\end{pmatrix}.
\end{align*}
As before one checks this belongs to $O(\Pic(X))$, and has eigenvalues which are not roots of unity, so cannot have finite order. So $O(\Pic(X))$ is infinite. The rest follows from Corollary \ref{corollary-torellispecial} as before.
\end{proof}

%
%

Corollary \ref{corollary-torellispecial} gives us more precise information about automorphisms of $X_\eta$. It will tell us that a specific matrix, rather than some power of it, comes from an automorphism. This will imply that we only need to understand the nef cones of a small number of SQMs of $X$, rather than all of them as in the previous case. The statement we use is the following
\begin{proposition}
The lattice automorphism of $\Pic(X_\eta)$ with matrix
\begin{align*}
M= \begin{pmatrix}
21 & 8 \\
-8 & -3
\end{pmatrix}.
\end{align*}
acts by $- \Id$ on the discriminant group $L^*/L$, hence induces an automorphism $\phi_M$ of $X_{\eta}$. The cone $$\langle 3\lambda_1 - \lambda_2, 3 \lambda_2 - \lambda_1\rangle$$ is a fundamental domain for the action of $\phi_M$ on $\Nef{X_\eta}$.
\end{proposition}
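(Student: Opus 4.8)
The plan is to verify the two claims separately: first that the given matrix $M$ acts as $-\Id$ on the discriminant group, which by Corollary \ref{corollary-torellispecial} (via Theorem \ref{corollary-torelli2}) lets us realize $M$ by a genuine automorphism $\phi_M$ of $X_\eta$; and second that the indicated cone is a fundamental domain for the cyclic group $\langle \phi_M \rangle$ acting on the nef cone $\Nef{X_\eta} = P(X_\eta)$. The discriminant computation is a finite check: the intersection lattice $L = \Pic(X_\eta)$ has Gram matrix $\left(\begin{smallmatrix} 4 & 6 \\ 6 & 4 \end{smallmatrix}\right)$ of determinant $-20$, so $L^*/L$ is a group of order $20$. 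I would compute $L^*$ explicitly in the dual basis, let $M$ act on it, and check that the induced map on $L^*/L$ is multiplication by $-1$; since the discriminant order $20$ is not among the excluded values $2,3,4,5,8,11,25$ in Theorem \ref{corollary-torelli2}, and there are no $(-2)$-classes so the nodal-class condition is vacuous, this immediately produces $\phi_M$.

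For the fundamental-domain claim, the key observation is that the two boundary rays of $\Nef{X_\eta} = P(X_\eta)$ are the eigenrays of $M$, since $P(X_\eta)$ is the closure of the positive cone cut out by the null rays $a = \tfrac{b}{2}(-3 \pm \sqrt5)$, and these null directions are exactly the eigenvectors of the isometry $M$ (a hyperbolic isometry of a rank-$2$ lattice of signature $(1,1)$ fixes precisely the two isotropic rays). Thus $\phi_M$ preserves $P(X_\eta)$ and acts on its interior as a hyperbolic translation along the geodesic between the two boundary rays. I would then exhibit the proposed cone $\Pi = \langle 3\lambda_1 - \lambda_2,\ 3\lambda_2 - \lambda_1 \rangle$ as a genuine sub-cone of $P(X_\eta)$ (checking that each generator has nonnegative self-intersection and lies on the correct side), and verify that $M$ maps one generating ray of $\Pi$ exactly onto the other: that is, $M \cdot \langle 3\lambda_2 - \lambda_1 \rangle = \langle 3\lambda_1 - \lambda_2 \rangle$ up to positive scalar. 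Granting this, the translates $M^k \cdot \Pi$ tile $\Int P(X_\eta)$ without interior overlap, which is exactly the fundamental-domain condition.

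The main obstacle is pinning down the correct orientation and matching of rays. Because $M$ has determinant $1$ and negative trace, one must be careful about whether $\langle \phi_M \rangle$ or $\langle \phi_M^2 \rangle$ is the orientation-preserving piece, and whether the single step $M$ maps $3\lambda_2 - \lambda_1$ to $3\lambda_1 - \lambda_2$ or whether one needs $M$ versus $M^{-1}$; this is settled by the explicit matrix multiplication
\begin{align*}
M \begin{pmatrix} -1 \\ 3 \end{pmatrix} = \begin{pmatrix} 21(-1) + 8(3) \\ -8(-1) - 3(3) \end{pmatrix} = \begin{pmatrix} 3 \\ -1 \end{pmatrix},
\end{align*}
confirming that $M$ carries the ray through $3\lambda_2 - \lambda_1$ precisely to the ray through $3\lambda_1 - \lambda_2$. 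The remaining step is to confirm that $\Pi$ is a \emph{strict} fundamental domain, i.e.\ that consecutive translates share only a boundary ray and cover everything; since $M$ acts as a hyperbolic translation with a single fundamental interval on the circle of rays in $\Int P(X_\eta)$, and $\Pi$ spans exactly the interval between a ray and its $M$-image, this follows at once once the boundary-matching identity above is in hand.
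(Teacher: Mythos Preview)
Your proposal is correct and follows essentially the same route as the paper: verify via Theorem~\ref{corollary-torelli2} (through Corollary~\ref{corollary-torellispecial}) that $M$ acts as $-\Id$ on the order-$20$ discriminant group, and then check the single identity $M\cdot(-1,3)^T=(3,-1)^T$ to conclude that the given cone is a fundamental domain. One small slip: you write that $M$ has negative trace, but in fact $\operatorname{tr} M = 21 + (-3) = 18 > 0$; this does not affect your argument since you settle the orientation question by direct computation anyway.
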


\begin{proof}
By Corollary \ref{corollary-torellispecial} it is enough to show that $M$ satisfies the hypotheses of Theorem \ref{corollary-torelli2}.

The dual lattice $L^*$ is  $$\ZZ \left(\frac{1}{10}, \frac{1}{10}\right)^T \oplus \ZZ \left(\frac{3}{10}, \frac{-2}{10}\right)^T $$ The discriminant group $L^*/L$ is the order 20 group isomorphic to $\ZZ/2\ZZ \oplus \ZZ / 10 \ZZ$. An easy check shows that the action of $M$ on $L^*/L$ is $-\Id$, as required.

Finally, the matrix $M$ takes $(-1, 3)^T$ to $(3,-1)^T$ giving us the desired fundamental domain.
\end{proof}

By Theorem \ref{theorem-lifting}, in order to prove the cone conjecture for $X$, it is enough to cover the cone $$V=\langle 3\lambda_1 - \lambda_2, 3 \lambda_2 - \lambda_1\rangle$$ by nef cones of SQMs of $X$. In this example the geometry is somewhat simpler than in the previous one, and so we are able to calculate the nef cones of the SQMs we need explicitly.  

\begin{proposition} \label{nefconesofflops}
For $i=1,2$ let $X_i$ denote the SQM of $X$ obtained by flopping the locus $Z_i \subset X$ of curves with class $l_i-e$. Then 
\begin{align*}
\Nef{X_i} = \langle H_1+H_2-F, \, H_j, \,4H_j-F \rangle \quad (j \neq i).
\end{align*}
\end{proposition}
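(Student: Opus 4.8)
The plan is to combine the general structure theorem for SQMs of Picard rank $3$ (Theorem \ref{theorem-ratpoly}) with an explicit identification of the third extremal ray, carried out on the generic fiber and then lifted. Throughout I write $G = H_1 + H_2 - F$, so that $-K_X = 4G$ and $G$ is the class of the $K3$ fibration $f\colon X \to \PP^4$, and I fix $j \neq i$.

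First I would set up the flop. The curves in class $l_i - e$ satisfy $(l_i - e)\cdot G = 0$, hence are $K$-trivial, and they span the extremal ray of $\overline{\Curv}(X)$ dual to the wall $\langle G, H_j\rangle$ of $\Nef{X} = \langle H_1, H_2, G\rangle$. The flop $X \dashrightarrow X_i$ along this locus therefore exists. By Theorem \ref{theorem-ratpoly}, $\Nef{X_i}$ is rational polyhedral and spanned by semi-ample divisors; moreover its shape is constrained: $G = -K_X/4$ spans an extremal ray, the two facets through $G$ are flopping walls, and the remaining facet is dual to a $K$-negative covering curve. Thus $\Nef{X_i} = \langle G, H_j, B\rangle$, where $\langle G, H_j\rangle$ is the wall shared with $\Nef{X}$ (so $H_j$, being pulled back from the flopping contraction and trivial on $l_i - e$, stays nef on $X_i$) and $B$ is the one ray still to be pinned down.

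Next I would compute $q(B)$ on the generic fiber. Under the restriction isomorphism of Proposition \ref{prop-restriction} one has $q(G) = 0$, $q(H_k) = \lambda_k$ and $q(F) = \lambda_1 + \lambda_2$. Since $X_\eta$ carries no $(-2)$-classes, $\Nef{X_\eta}$ is the full positive cone and $V = \langle 3\lambda_1 - \lambda_2,\, 3\lambda_2 - \lambda_1\rangle$ is a fundamental domain for $\phi_M$; the relative cones $q(\Nef{X}) = \langle \lambda_1, \lambda_2\rangle$ and $q(\Nef{X_i}) = \langle \lambda_j,\, 3\lambda_j - \lambda_i\rangle$ tile $V$. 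Hence $q(B) = 3\lambda_j - \lambda_i = q(4H_j - F)$, which forces $B = 4H_j - F + s_0 G$ for a single unknown scalar $s_0$.

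Finally I would determine $s_0 = 0$ by squeezing it between two inequalities. On one side, the lines $l_i$ in the $i$-th factor form a covering family surviving on $X_i$ (a general such line is disjoint from the flopped locus), so $l_i \cdot B \geq 0$; since $l_i \cdot H_k = \delta_{ik}$ and $l_i \cdot F = 0$, this is exactly the coefficient of $H_i$ in $B$, namely $s_0 \geq 0$. On the other side, I would show $4H_j - F$ is nef on $X_i$: because an SQM preserves spaces of sections, $|4H_j - F|$ can be analysed on $X$, where its sections are pullbacks of the quartics on $\PP^3_{(j)}$ vanishing on the projection $p_j(C)$. As $(l_i - e)\cdot(4H_j - F) = -1$, its base locus on $X$ is supported on the flopping locus $Z_i$; the flop replaces those curves by curves on which $4H_j - F$ is positive, so $4H_j - F$ becomes nef on $X_i$ (and semi-ample by Theorem \ref{theorem-ratpoly}). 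A nef class restricting to the boundary ray $3\lambda_j - \lambda_i$ must lie in the facet $\langle G, B\rangle$, forcing $s_0 \leq 0$. Combining gives $s_0 = 0$ and $B = 4H_j - F$, and recognizing $\langle H_j, 4H_j - F\rangle$ as the facet dual to $l_i$ confirms $\Nef{X_i} = \langle G, H_j, 4H_j - F\rangle$. I expect the main obstacle to be this nefness step: controlling the base locus of $|4H_j - F|$ on $X$ precisely enough to see that it coincides with the flopped locus and is cleared by the flop.
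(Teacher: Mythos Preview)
Your overall strategy---deducing the shape of $\Nef{X_i}$ from Theorem~\ref{theorem-ratpoly} and restriction to the generic fiber, then pinning down the last ray by a squeeze---is different from the paper's direct attack, and would be attractive if it worked. But there is a genuine gap: Theorem~\ref{theorem-ratpoly} carries the hypothesis that $X_\alpha$ has \emph{no flipping contraction}, and you do not verify this for $X_i$. The paper's only tool for ruling out flips is Theorem~\ref{theorem-noflips}, which applies only in dimension~$4$, whereas here $\dim X=6$. Without this you cannot assert that $\Nef{X_i}$ is simplicial with a single third ray $B$, nor that the relative nef cone $q(\Nef{X_i})$ extends all the way to $3\lambda_j-\lambda_i$: that ray is the edge of a fundamental domain for $\phi_M$, which has no a~priori relation to the chamber decomposition of $\Mov{X/\PP^4}$ into relative nef cones of SQMs. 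This is precisely why the paper does \emph{not} invoke the rank-$3$ machinery for $\PP^3\times\PP^3$, reserving it for the $4$-dimensional double cover of $\PP^2\times\PP^2$.

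The paper instead proves both inclusions by hand. For the upper bound it exhibits the three dual curve classes $l_i$, $f-l_i$, and $3l_i+l_j-4f$ as effective on $X_i$; the last is constructed via Cayley's count of $4$-secant lines to the projection of $C$ together with a deformation argument producing irreducible $(3,1)$-curves in $\PP^1\times\PP^3$ through the four intersection points. For the lower bound it shows $4H_j-F$ is actually basepoint-free on $X_i$: one blows up $Z_i$, checks that the quartics through $\pi_j(C)$ cut it out scheme-theoretically and hence have no common normal direction along $Z_i$ (so the proper transform is free), and then analyzes exactly which curves the resulting semi-ample class contracts. Your sketch of this step (``base locus lies on $Z_i$, the flop clears it'') is not yet a proof---base locus contained in the flopping locus does not by itself imply nefness after the flop---and this is where the paper's argument does real work.
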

\begin{proof}
The above cone is cut out by the classes $l_i$, $f-l_i$, and $3l_i+l_j-4f$. We claim all these are classes of effective curves on $X_i$. The first class is clearly effective on $X_i$; the second is the class of a contracted line in the cocenter of the flop $X \dashrightarrow X_i$. 

To see the third class is effective, observe that it is the class on $X$ of the proper transform of an irreducible curve of bidegree $(3,1)$ that intersects the base curve 4 times. By Cayley's formula, a curve of degree 10 and genus 11 in $\PP^3$ has 20 4-secant lines counted with multiplicity \cite{GrusonPeskine}. Let $L \subset \PP^3$ be a 4-secant line of the projection of $C$ to the first factor, and let $S=\pi^{-1}(L)$. Then $S = \PP^1 \times \PP^3$ and $C$ intersects $S$ in $4$ points. We need to show that there exists a curve of type $(3,1)$ passing through the four points. By standard deformation theory, the dimension of the space of curves in $\PP^1 \times \PP^3$ in the class $\beta$ passing through $m$ general points is either empty or has dimension at least
$-K_{\PP^1 \times \PP^3} \cdot \beta + 1- 3m$. In our case, this dimension is 3 and the space is nonempty since there exists completely reducible curves passing through these 4 points. Furthermore, an easy dimension count shows that the general member of the space is irreducible. We conclude that there is an irreducible curve of class $3l_i+l_j$ on $\PP^3 \times \PP^3$ which is 4-secant to the base curve; its proper transform on $X_i$ is therefore a curve of class $3l_i+l_j-4f$ as required. Hence the above cone is an upper bound for $\Nef{X_i}$.

To see that it equals the nef cone, we need to prove that the three generators are actually nef and effective on $X_i$. For $H_1+H_2-F$ and $H_j$ this is clear, since they are basepoint-free on $X$ and have representatives disjoint from the flopping locus $Z_i$.
It remains to show that $4H_j-F$ becomes nef after the flop of $l_i-f$. In fact we can show more, namely that this line bundle becomes basepoint-free after we flop. 

To do this, we recall our setup. Let $Z_i$ be the locus of curves of class $l_i-f$. This is a codimension-2 subvariety of $X$. We have the projection morphism $\pi_i : X \arrow \PP^3$ which maps $Z_i$ to a smooth curve $\Gamma \subset \PP^3$. We also have the flopping contraction $f_i : X \arrow Y_i$ which contracts a curve if and only if it has class proportional to $l_i-f$, hence is birational with exceptional set $Z_i$. 

The flop $X \arrow X_i$ is obtained by taking $X_i = \operatorname{Proj} R(X,L_i)$ where $L_i$ is any line bundle whose proper transform on $X_i$ is ample. In particular we can take $L_i = f_i^*A + \epsilon (4H_j-F)$ where $A$ is an ample line bundle on $Y_i$ and $\epsilon >0$ is sufficiently small: since $f_i^*A$ lies in the interior of the common face of $\nef{X}$ and $\nef{X_i}$, this class $L_i$ must lie in the ample cone of $X_i$, as depicted in Figure \ref{fig2}. 

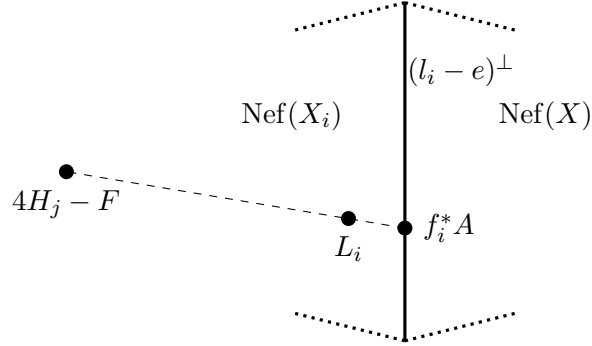
\begin{figure}
\begin{tikzpicture}[scale=0.75]
\draw[very thick] (0,-3) -- (0,3)  [shift={(1,-1.2)}] node{$(l_i-e)^\perp$};
\draw[very thick, dotted] (0,3) -- (2,2.5) [shift={(0.5,-1.5)}] node{$\nef{X}$};
\draw[very thick, dotted] (0,-3) -- (2,-2.5);
\draw[very thick, dotted] (0,3) -- (-2,2.5) [shift={(0,-1.5)}] node{$\nef{X_i}$};
\draw[very thick, dotted] (0,-3) -- (-2,-2.5);
\draw (-6,0) node[circle,fill,inner sep=2pt,label=below:$4H_j-F$]{};
\draw (0,-1) node[circle,fill,inner sep=2pt,label=right:$f_i^* A$]{};
\draw (-1,-0.83) node[circle,fill,inner sep=2pt,label=below:$L_i$]{};
\draw[dashed] (0,-1) -- (-6,0);

\end{tikzpicture} \caption{The nef cones of $X$ and $X_i$} \label{fig2}
\end{figure}

Now let $g_i : W_i \arrow X$ be the blowup of $X$ along $Z_i$ with exceptional locus $G_i$. Then
\begin{align*}
g_i^* L_i &= (f_i g_i)^* A + \epsilon g_i^* (4H_j -G_i) \\
&= (f_i g_i)^* A + \epsilon B + \epsilon G_i 
\end{align*} 
where $B$ is the class of the proper transform of the zero-set of a section of $4H_j-F$. Now zero-sets of sections of $4H_j-F$ are proper transforms on $X$ of pullbacks from $\PP^3$ of quartic surfaces containing the curve $\Gamma$. These surfaces cut out $\Gamma$ scheme-theoretically, so the base locus of the class $4H_j-F$ is precisely $Z_i$, and moreover divisors in this class have no common normal directions along $Z_i$. Hence when we blow up, the proper transform $B$ becomes basepoint-free. So we conclude that 
\begin{align*}
X_i &= \operatorname{Proj} R(X,L_i) = \operatorname{Proj} R(W_i,g_i^* L_i) = \operatorname{Proj} R(W_i,(f_i g_i)^* A + \epsilon B )
\end{align*}
where the second equality comes from the fact that throwing out the exceptional divisor $G_i$ does not affect $\operatorname{Proj}$. In other words we get $X_i$ by blowing up along $W_i$ and then taking the contraction $\varphi$ determined by the semi-ample line bundle $(f_i g_i)^* A + \epsilon B$. It remains to understand which curves are contracted by $\varphi$, and to show that $\varphi_* B$, which is the proper transform of $4H_j-F$, remains basepoint-free.

A curve $C \subset W_i$ is contracted by $\varphi$ if and only if $((f_i g_i)^* A) \cdot C = B \cdot C = 0$. Since $A$ is ample on the base of the flopping contraction, the first equality can only hold if $f_i g_i (C)$ is a point: in other words if $C$ is contracted by $g_i$ or if $g_i(C)$ is a multiple of a curve of class $l_i-f$. If $C$ is contracted by $g_i$ then $C \cdot G_i <0$, and hence $B \cdot C = g_i^*(4H_j-F) \cdot C - F \cdot C =0- G_i \cdot C>0$. So we can restrict attention to curves $C$ such that $g_i(C)$ is a multiple of $l_i-f$.  Now the fiber of $Z_i \arrow \Gamma$ over a point of $\Gamma$ is the blowup of $\PP^3$ at a point, and the curves of class $l_i$ are the lines through the origin. The normal bundle of $Z_i$ is just the pullback of the normal bundle of $\Gamma$, so it restricts to the trivial bundle on each of these fibers; therefore $G_i$ is the trivial $\PP^1$-bundle over $Z_i$. If $C \subset G_i$ is a curve such that $g_i(C)$ is a multiple of $l_i-f$, then it lives inside the surface $g^{-1} g_i(C) = \PP^1 \times \PP^1$; restricting $B$ to such a surface, we get a linear system of lines whose coordinate in the second $\PP^1$ is constant (specified by the normal direction of the corresponding quartic at the corresponding point of $\Gamma$). So a curve $C$ in such a surface with $B \cdot C=0$ must also be a line with constant coordinate in the second $\PP^1$. Contracting all such curves to get $X_1$, two disjoint sections of $B$ map to two distinct points. So $\varphi_* B$ remains basepoint-free on $X_1$, as required.
\end{proof}
It is now simple to deduce the main result of this section. For clarity, we repeat the assumptions on our linear system.
\begin{corollary} \label{P3P3main}
Let $Z=\PP^3 \times \PP^3$ and $H$ the line bundle $H_1+H_2$. Let $V \subset |H|$ be a very general linear system of dimension 4, with base locus $C$, and let $X$ be the blowup of $Z$ along $C$. Then the movable cone conjecture holds for $X$.
\end{corollary}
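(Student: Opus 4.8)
The plan is to apply the Lifting Theorem \ref{theorem-lifting}. Since $\rho(Z)=2$ we have $\rho(X/\PP^4)=2$, so by Proposition \ref{prop-restriction}(iii) and Proposition \ref{prop-auts} the relative movable cone $\Mov{X/\PP^4}$ is identified with $\Nef{X_\eta}$ and the group $\PsAut(X/\PP^4)$ with $\Aut(X_\eta)$. The proposition exhibiting the automorphism $\phi_M$ shows that $V=\langle 3\lambda_1-\lambda_2,\,3\lambda_2-\lambda_1\rangle$ is a fundamental domain for its action on $\Nef{X_\eta}=\Mov{X/\PP^4}$, so by Theorem \ref{theorem-lifting} it is enough to cover $V$ by the images $q(\Nef{X_i})$ of the effective nef cones of a finite collection of SQMs. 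I would take the collection to be the three varieties $X$, $X_1$, $X_2$, where $X_1,X_2$ are the flops whose nef cones are computed in Proposition \ref{nefconesofflops}.

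The central step is condition (d). Because the fibration $f$ is defined by $L_1+L_2-F$, the quotient map $q\colon N^1(X)\to N^1(X/\PP^4)$ kills $L_1+L_2-F$, whence $q(L_i)=\lambda_i$, $q(F)=\lambda_1+\lambda_2$, and $q(4L_j-F)=3\lambda_j-\lambda_{3-j}$. Projecting the three nef cones, using $\Nef{X}=\langle L_1,L_2,L_1+L_2-F\rangle$ from Theorem \ref{thm-nef} and the formula of Proposition \ref{nefconesofflops}, gives
\begin{align*}
q(\Nef{X}) &= \langle \lambda_1,\lambda_2\rangle,\\
q(\Nef{X_1}) &= \langle \lambda_2,\,3\lambda_2-\lambda_1\rangle,\\
q(\Nef{X_2}) &= \langle \lambda_1,\,3\lambda_1-\lambda_2\rangle.
\end{align*}
These three two-dimensional cones fit together edge to edge along the rays $\langle\lambda_1\rangle$ and $\langle\lambda_2\rangle$, and their union is precisely $\langle 3\lambda_1-\lambda_2,\,3\lambda_2-\lambda_1\rangle=V$. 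This verifies (d).

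Conditions (a)--(c) are then straightforward. For (a), $\Nef{X}$ is rational polyhedral by Theorem \ref{thm-nef} and $\Nef{X_1},\Nef{X_2}$ by Proposition \ref{nefconesofflops}. For (b), the class $-K_X=4(L_1+L_2-F)$ is the semi-ample fibration class, nef on $X$ and hence still nef after the flops producing $X_1,X_2$. For (c), the codimension-one faces of the three cones lying interior to $\Mov{X}$ are exactly the flopping walls: the walls $\langle L_1,L_1+L_2-F\rangle$ and $\langle L_2,L_1+L_2-F\rangle$ of $\Nef{X}$, dual to the curves $l_i-e$, and the outer walls of $\Nef{X_1},\Nef{X_2}$, dual to $3l_i+l_j-4f$. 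Pairing against $-K_X=4(L_1+L_2-F)$ gives $(L_1+L_2-F)\cdot(l_i-e)=0$ and $(L_1+L_2-F)\cdot(3l_i+l_j-4f)=3+1-4=0$, so all of these walls are $K$-trivial, as required; the remaining faces are the divisorial contraction of $F$ and the $K3$-fibration $f$, which lie on the boundary of $\Mov{X}$. With (a)--(d) in hand, Theorem \ref{theorem-lifting} yields the conclusion.

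I expect the real work to lie not in this assembly but in the two propositions it consumes. In contrast to the branched-cover case, here $\dim X=6$, so Theorems \ref{theorem-ratpoly} and \ref{theorem-noflips} do not apply and one cannot argue abstractly: the proof depends on the explicit determination of $\Nef{X_1}$ and $\Nef{X_2}$ in Proposition \ref{nefconesofflops} (whose crux is producing the effective $4$-secant curves of class $3l_i+l_j-4f$ and showing $4L_j-F$ becomes basepoint-free after the flop). Equally essential is that Corollary \ref{corollary-torellispecial} promotes the isometry $M$ \emph{itself}, not merely a power, to an automorphism acting as $-\Id$ on the discriminant group, which is what makes $V$ small enough to be covered by just two flops. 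The subtlety to watch in writing condition (c) is to restrict to the walls interior to $\Mov{X}$, since the divisorial and fiber-space faces are $K$-negative and lie on its boundary.
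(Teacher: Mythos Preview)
Your proposal is correct and follows exactly the approach of the paper. The paper's own proof is two sentences: it observes that $q$ maps $\Nef{X}\cup\Nef{X_1}\cup\Nef{X_2}$ onto the covering domain $V$ and invokes Theorem \ref{theorem-lifting}; you have simply unpacked this by computing the three projected cones explicitly and checking conditions (a)--(d) one by one, which is a welcome expansion of what the paper leaves implicit.
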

\begin{proof}
By the previous proposition, the quotient map $q : N^1(X) \arrow N^1(X/\PP^4)$ maps the rational polyhedral cone $\Nef{X} \cup \Nef{X_1} \cup \Nef{X_2}$ onto the covering domain $V = \langle 3\lambda_1 - \lambda_2, 3 \lambda_2 - \lambda_1\rangle$. Hence the conditions of Theorem \ref{theorem-lifting} are satisfied.
\end{proof}

We can pass from $Z$ to a general section without affecting the calculations above; this yields two more cases of the conjecture. Recall that $F(1,3;4)$ is isomorphic to a divisor of type $(1,1)$ in $\PP^3 \times \PP^3$. Let $Z=F(1,3;4)$ or the complete intersection of two $(1,1)$ divisors in $\PP^3 \times \PP^3$. Let $H$ be the restriction of $H_1+H_2$ to $Z$. Let $V \subset |H|$ be very general linear system of dimension $3$ or $2$, respectively. We conclude the following:
\begin{corollary}
The cone conjecture holds for $X$.
\end{corollary}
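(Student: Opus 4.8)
The plan is to observe that all three varieties in the statement are governed by exactly the same $K3$ geometry as the case $Z = \PP^3 \times \PP^3$ settled in Corollary \ref{P3P3main}, so that the whole argument there transports with only the ambient birational geometry of the flops requiring fresh attention. First I would record the crucial coincidence of fibers. In each case $Z$ is cut out from $W := \PP^3 \times \PP^3$ by divisors of type $(1,1)$: for $F(1,3;4)$ by one such divisor, and for the second variety by two. Since a general $K3$ fiber of $f : X \arrow \PP^{n-2}$ is the intersection in $Z$ of $\dim Z - 2$ members of $V \subset |H| = |(1,1)|$, in all three cases it is a complete intersection of four divisors of type $(1,1)$ in $W$; likewise the base curve, being the intersection of $\dim Z - 1$ members of $V$, is in all three cases a complete intersection of five such divisors, hence a smooth curve of degree $10$ and genus $11$ whose projection to either $\PP^3$ factor again has degree $10$ and genus $11$. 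Consequently the generic fiber $X_\eta$, its Picard lattice with Gram matrix $\left(\begin{smallmatrix} 4 & 6 \\ 6 & 4 \end{smallmatrix}\right)$, the absence of $(-2)$-classes, the infinitude of $O(\Pic(X_\eta))$ and of $\Aut^*(X_\eta)$, the distinguished automorphism $\phi_M$ produced via Corollary \ref{corollary-torellispecial}, and the covering domain $V = \langle 3\lambda_1 - \lambda_2, 3\lambda_2 - \lambda_1 \rangle$ are all literally unchanged. Here I must note that taking $V \subset |H|$ \emph{very general} still forces $\Pic(X_\eta) = \Pic(X_{\overline\eta})$ via the Noether--Lefschetz argument of Lemma \ref{prop-steinitz}, now applied inside the section $Z$, since the fiber is the same type of complete-intersection $K3$.

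With the fiber data fixed, the relative half of the conjecture is formal. By the Lefschetz hyperplane theorem $\rho(Z) = 2$, so $\rho(X) = 3$ and $N^1(X/\PP^{n-2})$ is two-dimensional; Propositions \ref{prop-restriction} and \ref{prop-auts} then identify $\Mov{X/\PP^{n-2}} \cong \Nef{X_\eta}$ and $\PsAut(X/\PP^{n-2}) \cong \Aut^*(X_\eta)$, which is infinite. To lift this to the absolute movable cone I would invoke Theorem \ref{theorem-lifting} using the same two flops $X_1, X_2$ of the loci of curves of class $l_i - e$. It then suffices to reprove the analogue of Proposition \ref{nefconesofflops}, namely that $\Nef{X_i} = \langle H_1 + H_2 - F, \, H_j, \, 4H_j - F \rangle$ with $j \neq i$, and to check conditions (a)--(d): rational polyhedrality together with nefness of $-K_X$ (automatic, since the $X_i$ are flops of $X$), coverage of the domain $V$ under the quotient map $q$ (identical to the computation in Corollary \ref{P3P3main}), and $K$-triviality of the relevant codimension-one faces.

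The main obstacle is precisely this re-verification of Proposition \ref{nefconesofflops} in the new ambient variety, where the flop geometry is genuinely dimension-dependent. The most substantive new input is the effectivity of the class $3l_i + l_j - 4e$, which rested on producing an irreducible $4$-secant curve of bidegree $(3,1)$ to the base curve inside $W = \PP^3 \times \PP^3$. The Gruson--Peskine count of twenty four-secant lines to the degree-$10$, genus-$11$ projection of the base curve survives unchanged, but the surface $S = \pi^{-1}(L)$ over a four-secant line $L$ is now $Z \cap (\PP^1 \times \PP^3)$ rather than $\PP^1 \times \PP^3$ itself, so the deformation count guaranteeing an irreducible curve of class $3l_i + l_j$ through the four points must be carried out inside this section. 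I expect this to succeed because the relevant curves deform within $Z$, but it is the one place where real geometry, rather than a formal transport, is required.

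Finally, the $K$-triviality condition (c) of Theorem \ref{theorem-lifting} is handled case by case. When $\dim X = 4$, i.e.\ for the intersection of two $(1,1)$ divisors, one has $-K_X = 2H$ and $X$ is a flop of a smooth variety, so Theorem \ref{theorem-noflips} rules out flipping contractions outright and every codimension-one face of $\Nef{X_i}$ meeting the big cone is dual to a $K$-trivial curve. When $\dim X = 5$, i.e.\ for $F(1,3;4)$, Theorem \ref{theorem-noflips} no longer applies, so I would instead read off condition (c) directly from the explicit generators of $\Nef{X_1}$ and $\Nef{X_2}$, exactly as in the $\PP^3 \times \PP^3$ computation, where no general no-flip statement was invoked. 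In either case the conditions of Theorem \ref{theorem-lifting} are met and the movable cone conjecture follows for $X$.
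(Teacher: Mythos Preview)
Your overall plan is right and closely parallels the paper's: identify $Z$ as a bi-linear section of $W=\PP^3\times\PP^3$, observe that the $K3$ fibers, Picard lattice, automorphism $\phi_M$, and covering domain $V$ are literally the same, and then verify the hypotheses of Theorem~\ref{theorem-lifting} using the two flops $X_1,X_2$. The paper streamlines this by noting explicitly that $X=f^{-1}(\Lambda)$ for a linear subspace $\Lambda\subset\PP^4$, where $f$ is the fibration of the $\PP^3\times\PP^3$ case; since the flops and pseudo-automorphisms all happen over the base $\PP^4$, they automatically restrict to $X$, so conditions (a)--(d) transfer without re-checking anything except the effectivity of the dual curve classes.

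The one point where your argument is incomplete---and where the paper's approach is genuinely different---is the effectivity of $3l_i+l_j-4f$. You propose to redo the deformation count inside the smaller surface $S\cap Z$, and leave this as an expectation. The paper avoids this entirely by a Bezout trick: work in the full $S=\PP^1\times\PP^3\subset W$, where the deformation count from Proposition~\ref{nefconesofflops} shows that curves of class $3l_i+l_j$ through the four points $S\cap C$ move in a $3$-dimensional family, hence can be made to pass through a \emph{fifth} assigned point, chosen on $S\cap Z$. Since such a curve $\Gamma$ has $\Gamma\cdot(H_1+H_2)=4$, any $(1,1)$-divisor containing all five points must contain $\Gamma$; as $Z$ is cut out by $(1,1)$-divisors, $\Gamma\subset Z$. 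This gives the required effective curve on $X$ without any new deformation theory inside the section. Your alternative might well work, but it requires analyzing curves on $S\cap Z$ (a threefold or surface depending on the case) and is not carried out; the paper's trick sidesteps this.

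Your invocation of Theorem~\ref{theorem-noflips} in dimension $4$ is correct but unnecessary here: once the nef cones $\Nef{X_i}$ are pinned down explicitly by the three curve classes $l_i$, $f-l_i$, $3l_i+l_j-4f$, condition~(c) is read off directly in all dimensions, as you indicate for the $5$-dimensional case.
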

\begin{proof}
All the calculations of the previous case carry over unchanged to these two cases. Note that in these cases $X=f^{-1}(\Lambda)$ where $f: \operatorname{Bl}_C(\PP^3 \times \PP^3)  \arrow \PP^4$ is the fiber space from the previous case, and $\Lambda \subset \PP^4$ is a linear subspace of codimension 1 or 2. Since all the flops and pseudo-automorphisms in the previous case happen over the base, everything restricts to these subvarieties. In particular, in each case $X$ has two SQMs $X_1$ and $X_2$ obtained by flopping curves of class $l_1-f$ and $l_2-f$.

All that needs to be checked is that the nef cones from the previous example do indeed restrict to the nef cones of the SQMs $X$, $X_1$ and $X_2$. For $X$ this is proved in Theorem \ref{thm-nef}. For the $X_i$, we just need to verify that the same curve classes exist on $X$. For the classes $l_i$ and $f-l_i$ this is clear. For $3l_i+l+j-4f$, we have to ensure that the curves on $\operatorname{Bl}_C(\PP^3 \times \PP^3)$ in this class that we constructed in the previous case can be forced to lie on $X$. Taking proper transforms on $\PP^3 \times \PP^3$, such a curve $\Gamma$ has $\Gamma \cdot (H_1+H_2)=4$. However, the deformation theory argument above showed that such a curve could be chosen to pass through 5 assigned points in $S=\PP^1 \times \PP^3$. Taking 4 points to be  the intersection points $S \cap C$ with the base curve, and the fifth to be any point of $S \cap Z$, we get that $\Gamma$ must be contained in any $(1,1)$-divisor containing $Z$. Since $Z$ is cut out by $(1,1)$-divisors in both cases, this shows $\Gamma \subset Z$, as required.
\end{proof}

\subsection{$\PP^1 \times \PP^1 \times \PP^1 \times \PP^1$}

 Finally, we verify the movable cone conjecture for a variety of higher Picard rank. This example has two interesting differences from the others we have studied. First, the $K3$ surface occurring as the generic fiber of our fibration has Picard rank 4, and this allows for a more complicated non-abelian automorphism group. Second, we do not invoke the Global Torelli theorem to find automorphisms of the $K3$ surface: instead we obtain them from certain elliptic fibrations on the surface.

Our setup is as follows. Let $Z = \PP^1 \times \PP^1 \times \PP^1 \times \PP^1$. Take a general 2-dimensional linear subsystem $V \subseteq |H|$, where $H=-\frac12K_Z=\mathcal{O}(1,1,1,1)$. Then $Bs(V)$ is a smooth curve $C$ of multidegree $(6,6,6,6)$ and genus 13. Let $X$ be the blowup of $Z$ along $C$. So there is a morphism $\pi: X \rightarrow \PP^2$ with generic fiber a $K3$ surface $X_\eta$. 

We fix the following notation: for $i=1,2,3,4$ let $\pi_i: X \rightarrow \PP^1$ be the projections, and $H_i$ the pullback by $\pi_i$ of $\mathcal{O}_{\PP^1}(1)$. Let $F$ be the exceptional divisor of the blowup along $C$.

Now we study the movable cone of $X$. As usual, the first step to understand the nef cone of the surface $X_\eta$. First note that $\rho(X_\eta)=\rho(X)-1=4$, since there are no reducible members in the linear system $-\frac12K_X$. For each $i$, the line bundle $L_i = (H_i)_{|X_\eta}$ gives a basepoint-free linear system on $X_\eta$ with $L_i^2=0$: that is, each $L_i$ defines an elliptic fibration $p_i: X_\eta \rightarrow \PP^1$. Moreover, by calculating products of the form $H_i \cdot H_j \cdot (\sum_k H_k)^2$ in $CH^*(X)$, we find that the intersection matrix of $\Pic(X_\eta)$ with respect to the basis $L_i$ is 
\begin{align} \label{k3-int} \tag{$\ast$}
\begin{pmatrix}
0 & 2 & 2 & 2\\ 2 & 0 & 2 &2 \\ 2 &2 & 0 & 2 \\ 2 & 2 &2 & 0
\end{pmatrix}
\end{align}

\begin{proposition}
Fix any ample divisor $H$ on $X_\eta$. Then 
$\Nef{X_\eta}$ is the rational hull $P_+$ of the positive cone $P=\{x \in N^1(X_\eta) \mid x^2 > 0 \text{ and } x \cdot H >0\}$.  
\end{proposition}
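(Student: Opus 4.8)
The plan is to reduce the entire statement to the single fact that $\Pic(X_\eta)$ contains no $(-2)$-class, after which the standard description of the nef cone of a K3 surface together with Riemann--Roch delivers the result. So the genuine content is a lattice computation, and the rest is an appeal to well-understood K3 geometry.

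First I would carry out the lattice computation directly from the intersection form \eqref{k3-int}. Writing a class as $x=\sum_{i=1}^{4} a_i L_i$ and reading off the pairing, one finds
\[
x^2 = 2\sum_{i\neq j} a_i a_j = 2\Bigl[\bigl(\textstyle\sum_i a_i\bigr)^2 - \sum_i a_i^2\Bigr].
\]
Setting $s=\sum_i a_i$ and $q=\sum_i a_i^2$, this is $x^2 = 2(s^2-q)$. Since $a_i^2\equiv a_i \pmod 2$ we have $q\equiv s\equiv s^2 \pmod 2$, so $s^2-q$ is always even and $x^2$ is divisible by $4$. In particular $x^2=-2$ has no integral solution, so $\Pic(X_\eta)$ has no $(-2)$-class; by the final statement of Corollary \ref{corollary-torellispecial} there are then no nodal classes, that is, no $(-2)$-curves on $X_\eta$.

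Next I would invoke the standard structure theory of the nef cone of a K3 surface: the ample cone is cut out of the positive cone $P$ by the hyperplanes orthogonal to the effective $(-2)$-classes, so $\nef{X_\eta}$ is the intersection of $\overline{P}$ with the half-spaces $\{x\cdot\delta\geq 0\}$ as $\delta$ ranges over the effective $(-2)$-classes. By the previous step there are no such $\delta$, and hence $\nef{X_\eta}=\overline{P}$, the closed positive cone.

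Finally I would identify the effective nef cone with the rational hull $P_+$. For any rational class $x$ in the interior $P$ one has $x^2>0$, so Riemann--Roch gives $\chi(x)=2+\tfrac12 x^2>0$; hence $x$ or $-x$ is effective, and since $x\cdot H>0$ it is $x$ itself. Thus every rational class of $P$ is effective and, by the previous step, nef, so it lies in $\Nef{X_\eta}$; a short Hodge-index argument (the form is negative definite on $H^\perp$) shows the same for the nonzero rational isotropic boundary classes. Since $\Nef{X_\eta}$ is a convex cone containing all of these, $P_+\subseteq\Nef{X_\eta}$, while the reverse inclusion follows from $\Nef{X_\eta}\subseteq\nef{X_\eta}=\overline{P}$ together with the fact that $\Nef{X_\eta}$ is generated by rational Cartier classes. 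This yields $\Nef{X_\eta}=P_+$. The self-intersection computation and the Riemann--Roch input are routine; the only delicate point I anticipate is the bookkeeping between the irrational round cone $\overline{P}$ and its rational hull $P_+$ along the boundary, but because no $(-2)$-class obstructs the description is as clean as possible, so the essential step remains the mod-$2$ parity argument ruling out $(-2)$-classes.
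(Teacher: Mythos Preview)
Your proof is correct and follows essentially the same approach as the paper: the key step in both is showing that every self-intersection is divisible by $4$, hence there are no $(-2)$-classes, after which the standard K3 description of the nef cone gives the result. The paper is slightly more direct, simply writing $\bigl(\sum_i a_iL_i\bigr)^2=4\sum_{i<j}a_ia_j$ and reading off the divisibility, whereas you deduce the same via your $s^2-q$ parity argument; you also spell out the Riemann--Roch step identifying $\Nef{X_\eta}$ with $P_+$, which the paper leaves implicit.
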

\begin{proof}
For any $K3$ surface $S$, the nef cone $\nef{S}$ is defined inside the closed positive cone $\overline{P}$ by the hyperplanes dual to classes of $(-2)$-curves on $S$. Using the matrix (\ref{k3-int}) to compute the self-intersection of classes on $X_\eta$ we get
\begin{align*}
\left(\sum_i a_i L_i \right)^2 = 4 \sum_{i \neq j} a_i a_j
\end{align*}
so there are no $(-2)$-classes. 
\end{proof}

So all the extremal rays of $\Nef{X_\eta}$ correspond to elliptic pencils. There are infinitely many of these, because the intersection form defines a projective quadric threefold with a rational point, hence infinitely many.  This implies (for example by Sterk \cite{Sterk1985}) that the automorphism group of $X_\eta$ must be infinite. We get a supply of automorphisms from the four elliptic fibrations $p_i$ of $X_\eta$; let us now study these fibrations in more detail.


To fix notation let us consider the fibration $p_1$. Each of $L_2$, $L_3$, $L_4$ restricts to a divisor of degree 2 on $C_1$, so the divisors $L_i-L_j$ (for $i, j \in \{2,3,4\}$) have degree $0$. Translation by each of these elements gives an automorphism of $C_1$, hence of $X_\eta$. Let us denote these automorphisms by $\tau^1_{ij}$. 

Let us write down the matrix of $\tau^1_{32}$ acting on $\Pic(X_\eta)$ in the basis $\{L_1,\ldots,L_4\}$. We calculate this matrix by using the formula for the action of $\Pic^0(C_1)$ on $\Pic(C_1)$: an element $y \in Pic^0(C_1)$ acts by
\begin{align*}
x &\mapsto x + \operatorname{deg}x \cdot y 
\end{align*}
and so this extends to an action on $\Pic(X_\eta)$ by the formula
\begin{align*}
x &\mapsto x + (x \cdot C_i) \cdot y + m(x,y) F 
\end{align*}
where $F$ is the class of a fiber and $m(x,y)$ is an integer that we can calculate by using the fact that automorphism preserve self-intersection number. 

Applying this with $y=L_3-L_2$, and $x=L_i$ (for $i=1,\ldots,4$) we find that $\tau_{32}^1$ acts by the matrix
\begin{align*} M^1_{32} = 
\begin{pmatrix}
1 & 2 & 6 &4 \\
0 & -1 & -2 & -2 \\
0 & 2 & 3 &2 \\
0 & 0 & 0 & 1 
\end{pmatrix}
\end{align*} 

As well as translations, we also have automorphisms $h^i_j$ of $X_\eta$ coming from hyperelliptic involutions. Again to fix notation we consider the fibration $p_1$. There is a hyperelliptic involution of $C_1$ fixing $L_2$: computing the action of this involution on $\Pic(X_\eta)$ we get the matrix
\begin{align*} H^1_2 = 
\begin{pmatrix}
1 & 0 & 2 &2 \\
0 & 1 & 2 & 2 \\
0 & 0 & -1 &0 \\
0 & 0 & 0 & -1 
\end{pmatrix}
\end{align*}

By symmetry, if we consider the other elliptic fibrations $p_i$ and the automorphisms $\tau^i_{jk}$ and $h^i_j$ arising from them, we just get a conjugate of these matrices by the appropriate permutation matrix. Therefore the image of $\Aut(X_\eta)$ in $GL(N^1X(X_\eta))$ contains the group 
\begin{align*}
G &= \langle M^1_{32}, H^1_2, S_4 \rangle
\end{align*} 
where $S_4$ is the group of $4 \times 4$ permutation matrices.
\begin{proposition}
The group $G$ acts on $\Nef{X_\eta}$ with rational polyhedral covering domain
\begin{align*}
V = \langle L_i, \sum_k L_k -2 L_i   \rangle \quad (i=1\ldots,4 ) 
\end{align*}
\end{proposition}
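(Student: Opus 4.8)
The plan is to give an explicit inequality description of $V$, check that $V\subseteq\Nef{X_\eta}$, and then produce, for every interior class, a group element carrying it into $V$; Looijenga's Theorem \ref{thm-looijenga} (condition (ii)) will then upgrade this covering property to the fundamental domain demanded by the conjecture.

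Write a class as $x=\sum_i a_iL_i$ and put $\mathbf 1=\sum_iL_i$. From the intersection matrix (\ref{k3-int}) one reads off $x^2=2\sum_{i\neq j}a_ia_j$ and $x\cdot\mathbf 1=6\sum_i a_i$. Setting $w_{ij}=2L_k+2L_l-L_i-L_j$ (with $\{i,j,k,l\}=\{1,2,3,4\}$) one computes $x\cdot w_{ij}=6(a_i+a_j)$, and a direct enumeration of the rays cut out by the six hyperplanes $w_{ij}^{\perp}$ shows that
$$V=\Big\{\ \textstyle\sum_i a_iL_i\ :\ a_i+a_j\ge 0\ \text{for all }i<j\ \Big\}.$$
In particular $V$ is rational polyhedral. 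Each generator $L_i$ and each $\sum_kL_k-2L_i$ is isotropic and pairs positively with $\mathbf 1$, so lies in the closed positive cone on the ample side; since $X_\eta$ has no $(-2)$-classes, $\Nef{X_\eta}$ is the rational hull of the positive cone, and hence $V\subseteq\Nef{X_\eta}$.

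Next I would record the effect of the hyperelliptic involution $H^1_2$ in coordinates: it sends $(a_1,a_2,a_3,a_4)$ to $(a_1+2a_3+2a_4,\,a_2+2a_3+2a_4,\,-a_3,\,-a_4)$, and therefore replaces $s:=\sum_i a_i$ by $s+2(a_3+a_4)$. Since $S_4\subseteq G$ permutes the $a_i$ and fixes $\mathbf 1$, we may always sort the coordinates without changing $s=(x\cdot\mathbf 1)/6$. The key observation is that if, after sorting so that $a_1\ge a_2\ge a_3\ge a_4$, we have $a_3+a_4<0$ (equivalently $x\notin V$, since $a_3+a_4$ is then the least pairwise sum), applying $H^1_2$ strictly decreases $s$. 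This sets up a minimisation argument for the covering $\operatorname{Int}(\Nef{X_\eta})=P\subseteq G\cdot V$, where $P$ is the open positive cone. Fix $x\in P$; all of $G\cdot x$ lies on the sheet $\{y^2=x^2\}$ of positive classes, and the functional $y\mapsto y\cdot\mathbf 1$ is proper and bounded below on this sheet by the reverse Cauchy--Schwarz inequality $y\cdot\mathbf 1\ge\sqrt{24\,y^2}$ (valid for classes in a common positive cone, using $\mathbf 1^2=24$). Because $G$ consists of integral isometries of the signature $(1,3)$ lattice $\Pic(X_\eta)$, it is a discrete subgroup of $O(1,3)$ and so acts properly discontinuously; hence $G\cdot x$ is discrete and $y\cdot\mathbf 1$ attains its minimum on the orbit, at some $y_0$. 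Replacing $y_0$ by a sorted $S_4$-translate (which does not change $y_0\cdot\mathbf 1$), minimality together with the displayed effect of $H^1_2$ forces $a_3+a_4\ge 0$, whence all pairwise sums are $\ge 0$ and $y_0\in V$. Thus $x\in G\cdot V$, and $V$ is a covering domain, so Theorem \ref{thm-looijenga} finishes the proof.

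The routine parts are the polyhedral description of $V$ and the inclusion $V\subseteq\Nef{X_\eta}$. The crux, and the step I expect to require the most care, is the covering statement: one must isolate the functional $x\cdot\mathbf 1$ that the available automorphism $H^1_2$ decreases \emph{precisely} when $x$ leaves $V$, and then invoke proper discontinuity of $G$ to guarantee that this reduction terminates at a class of $V$. (For rational classes the termination is immediate from integrality of $s$, but the minimisation argument above handles all classes of $P$ uniformly, which is what condition (ii) of Theorem \ref{thm-looijenga} requires.)
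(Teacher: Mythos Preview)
Your proof is correct and takes a genuinely different route from the paper's. The paper argues via a Dirichlet-type construction: for $x\in P$ and $S\subset G$ one sets $D(x,S)=\{y\in\overline P:\ x\cdot y\le x\cdot gy\ \text{for all }g\in S\}$, observes that $D(x,G)$ is a fundamental domain, and then verifies by a Sage computation that for $x=\sum_i L_i$ and $S=G(2)$ (words of length $\le 2$ in the generators) one already has $D(x,G(2))=V$. So the paper's proof is short but computer-assisted, and gives no direct insight into \emph{why} these eight rays bound a covering domain.

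Your argument replaces the computer check by an explicit reduction. The two new ingredients are (i) the facet description $V=\{a_i+a_j\ge 0\ \forall\, i<j\}$, and (ii) the observation that $H^1_2$ (after an $S_4$-sort) strictly decreases the height $x\cdot\mathbf 1$ exactly when the minimal pairwise sum $a_3+a_4$ is negative, i.e.\ exactly when $x\notin V$. Discreteness of $G\subset O(\Pic(X_\eta))$ then guarantees the descent terminates. This is more transparent and entirely by hand; it also explains conceptually why only the $S_4$-conjugates of $H^1_2$ are needed to cover---the translation matrices $M^1_{jk}$ play no role in your argument, whereas in the paper they are part of the generating set fed to Sage. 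The trade-off is that the paper's Dirichlet framework is mechanical and would apply with little change to other lattices or generating sets, while your reduction depends on spotting the right height functional and the specific form of $H^1_2$.
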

\begin{proof}
The idea is similar to the construction of Dirichlet domains in hyperbolic geometry, although a cruder version suffices here. 

For any subset $S \subset G$ and any $x \in P = \operatorname{Int}(\Nef{X_\eta})$, define
$$ D(x,S) = \left\{ y \in \overline{P} \mid x \cdot y \leq x\cdot gy \ \forall \ g \in S \right\}.$$
This is a closed convex cone inside $\overline{P}$. We observe the following:
\begin{itemize}
\item If $S \subset T$ then $D(x,T) \subset D(x,S)$. In particular, $D(x,G) \subset D(x,S)$ for any subset $S \in G$
\item $D(x,G)$ is a fundamental domain for the action of $G$ on $P$;
\end{itemize}
Putting these together, we see that to prove the proposition it suffices to find a finite subset $S \subset G$ and a point $x \in P$ such that $D_x^S$ equals $V$.

To do this we use computer calculations. In more detail, we take the generating set $\{ M^1_{32}, H^1_2, S_4\}$ for $G$, and we define $G(k) \subset G$ to be the set of all words of length $k$ in these elements and their inverses. For small values of $k$, the computer algebra system Sage \cite{sage} is able to compute the cone $D^{G(k)}_x$ for a given $x \in P$. We find that with $x=L_1+L_2+L_3+L_4$ and $k=2$, we have $D^{G(2)}_x = V$ as required. For the Sage code used to compute the cone, see \cite{sagecode}.
\end{proof}

\begin{corollary}
The variety $X$ satisfies the conditions of Theorem \ref{theorem-lifting}. Hence the Cone Conjecture is true for $X$. 
\end{corollary}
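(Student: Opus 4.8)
The plan is to verify the four hypotheses (a)--(d) of the Lifting Theorem \ref{theorem-lifting}, taking as relative fundamental domain the cone $V=\langle L_i,\ \sum_k L_k-2L_i\rangle$ produced in the previous proposition. First I would assemble the identifications of \S\ref{subs-lifting}: since $f\colon X\to\PP^2$ is an equidimensional $K3$ fibration with irreducible fibers and $b_1(X)=b_1(X_\eta)=0$, Proposition \ref{prop-restriction} gives $\Mov{X/\PP^2}\iso\Nef{X_\eta}=P_+$, and Proposition \ref{prop-auts} gives $\PsAut(X/\PP^2)\iso\Aut(X_\eta)$. Because $G\subseteq\Aut^*(X_\eta)$, the cone $V$ is a covering domain not only for $G$ but for the full group $\PsAut^*(X/\PP^2)$; hence by Looijenga's Theorem \ref{thm-looijenga} it produces a rational polyhedral fundamental domain for the relative movable cone conjecture. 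As in the branched-cover case, it therefore suffices to cover $V$ itself by the relative nef cones of finitely many SQMs.

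The central tile is $q(\Nef X)=\langle L_1,L_2,L_3,L_4\rangle$, the image under the quotient $q\colon N^1(X)\to N^1(X/\PP^2)$ of the cone $\Nef X=\langle H_1,H_2,H_3,H_4,\ \sum_i H_i-F\rangle$ of Theorem \ref{thm-nef}; here $q$ kills the fiber class $\sum_i H_i-F$ and sends each $H_i$ to its restriction $L_i$. The key step is to realize the SQMs covering $V$ as pseudo-automorphism translates of $X$. Each generator of $G$ --- the translations $\tau^i_{jk}$, the hyperelliptic involutions $h^i_j$, and the permutations in $S_4$ --- is a genuine automorphism of $X_\eta$, so by Proposition \ref{prop-auts} it lifts to a pseudo-automorphism $\tilde g$ of $X$ over $\PP^2$. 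Since $\PsAut(X)$ permutes the nef cones of SQMs inside $\Mov X$, the cone $\tilde g^*\Nef X$ is the nef cone of an SQM $X_g$ of $X$, with $q(\Nef{X_g})=g\cdot q(\Nef X)$. I would then take $\{X_i\}$ to be the $X_g$ for $g$ in the finite set $G(2)$ of words used in the Sage computation, so that hypothesis (d) becomes the assertion that $\bigcup_{g\in G(2)} g\cdot\langle L_1,\dots,L_4\rangle$ contains the Dirichlet domain $V=D^{G(2)}_x$.

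With this choice the remaining hypotheses are cheap. Hypothesis (a) holds because each $\Nef{X_i}=\tilde g^*\Nef X$ is a \emph{linear image} of the rational polyhedral cone $\Nef X$ of Theorem \ref{thm-nef}, hence rational polyhedral; this is precisely where realizing the tiles as $\PsAut$-images, rather than as arbitrary flops, lets us bypass the Picard-rank-$3$ hypothesis of Theorem \ref{theorem-ratpoly}. For (b), a pseudo-automorphism preserves $K_X$ and $-K_X$ is basepoint-free (it defines $f$), so $-K_X=\tilde g^*(-K_X)$ is nef on every $X_i$. For (c), $X$ is a smooth fourfold with $-K_X=2(\sum_i H_i-F)$, so each $X_i$, being obtained from $X$ by flops, has no flipping contraction by Theorem \ref{theorem-noflips}; since $-K_X$ is nef we have $K_X\cdot C\le 0$ for every curve, so every interior small-contraction wall of $\Nef{X_i}$ is $K$-trivial, while the divisorial and fiber-space walls lie on $\partial\Mov X$. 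Feeding $\{X_i\}$ and $V$ into Theorem \ref{theorem-lifting} then yields the movable cone conjecture for $X$.

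The step I expect to be the main obstacle is hypothesis (d): showing that the $G(2)$-translates of the single chamber $q(\Nef X)$ genuinely cover $V$. This is the geometric content secretly carried by the computer identity $D^{G(2)}_x=V$, and the delicate point is bridging the purely group-theoretic statement (that $G(2)$-translates of $V$ cover $\operatorname{Int}P_+$) and the geometric requirement (that the relative nef chambers of the $X_g$ cover $V$). Concretely one must confirm that $q(\Nef X)$ is a maximal relative nef chamber and that the flops across its walls are all induced by elements of $G$, so that the $G$-orbit of this chamber tiles $P_+$ and the eight rays $L_i,\ \sum_k L_k-2L_i$ of $V$ are accounted for by finitely many translates. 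Once this covering is in place, the reduction of $\Nef{X_i}$ to a linear image of $\Nef X$ makes (a)--(c) routine.
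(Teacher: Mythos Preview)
Your strategy diverges from the paper's, and the divergence is exactly at the point you flag as the obstacle. The paper does \emph{not} cover $V$ by pseudo-automorphism translates of $\Nef X$. Instead it decomposes $V=V_0\cup V_1\cup\cdots\cup V_4$ with $V_0=\langle L_1,\dots,L_4\rangle=q(\Nef X)$ and, for $i=1,\dots,4$, $V_i=\langle L_j\ (j\neq i),\ \sum_{k\neq i}L_k-L_i\rangle$; it then constructs the four flops $X_i$ of the loci of curves of class $l_i-f$ and computes each $\Nef{X_i}$ directly by exhibiting the dual effective curve classes $l_i$, $f-l_i$, and $l_i+l_j-2f$. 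The last class requires a genuine geometric argument: one finds a $(1,1)$-curve in a fiber $\pi_{kl}^{-1}(p)\cong\PP^1\times\PP^1$ over a singular point $p$ of the projection $\pi_{kl}(C)$, using that $C$ has genus $13$ while $\pi_{kl}(C)$ has arithmetic genus $25$. Basepoint-freeness of $D_i=2\sum_{k\neq i}H_k-F$ on $X_i$ is then argued as in Proposition~\ref{nefconesofflops}.

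Your proposal to take $\{X_g\}_{g\in G(2)}$ and invoke the Sage identity $D^{G(2)}_x=V$ has a real gap: that identity says $V$ is a Dirichlet-type domain for the action of $G$ on $P_+$, which gives $G\cdot V\supseteq P$; it says nothing about whether $G(2)$-translates of the strictly smaller cone $\langle L_1,\dots,L_4\rangle$ cover $V$. In fact, since $V$ is (up to boundary) a fundamental domain for $G$ and contains five distinct relative-nef chambers $V_0,\dots,V_4$, the $G$-orbit of $V_0$ alone will not recover $V$ unless you first prove that all five chambers lie in a single $G$-orbit---equivalently, that the involution $\sigma_i\colon L_i\mapsto -L_i+\sum_{k\neq i}L_k$, $L_j\mapsto L_j$ actually lies in $\Aut^*(X_\eta)$ and indeed in $G$. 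You identify this as the missing step but do not supply it, and the Sage computation as stated does not. Your reductions of (a)--(c) via pseudo-automorphisms are correct and elegant, but without establishing transitivity of $G$ on the chambers meeting $V$, hypothesis (d) fails and the argument does not close.
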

\begin{proof}
Again we need to cover the cone $V$ above by nef cones of SQMs of $X$. To do this, let us decompose $V$ as 
\begin{align*}
V=V_0 \cup V_1 \cdots \cup V_4
\end{align*}
where the $V_i$ are simplicial cones defined as follows: $V_0$ is the simplicial cone spanned by the $L_i$, and for $i=1,\ldots,4$ the cone $V_i$ is the span of the face of $V$ opposite $L_i$ together with the vector $\sum_{k \neq i} L_k - L_i$. 

Now $V_0$ is covered by $\Nef{X}$. Let us show how to cover $V_i$. Flopping the locus of curves on $X$ of class $l_i-e$, we obtain an SQM $X_i$ of $X$. We claim that $\Nef{X_i}$ maps onto $V_i$.  To check this, first note that $\Nef{X_i}$ contains the classes $L_j$ for $j \neq i$, so it remains only to show that $\sum_{k \neq i} L_k - L_i$ is in the image of $\Nef{X_i}$. Indeed,  $\sum_{k \neq i} L_k - L_i$, is mapped onto by the class the class $D_i=2\sum_{k \neq i} H_i -F = -\frac{1}{2}K_X + (\sum_{k \neq i} H_k - H_i)$; to see this class is basepoint-free on $X_i$ we can use the same argument as in  Proposition \ref{nefconesofflops}.

Finally it remains to show that $\Nef{X_i}$ is rational polyhedral. In fact we claim that $\Nef{X_i}$ is simplicial, spanned by the basepoint-free classes $-\frac{1}{2}K_X$, $H_j \, (j \neq i)$, and $D_i$. All of these classes are basepoint-free on $X_i$, so this is a lower bound for the nef cone; to prove it equals the nef cone, we need to show that the dual cone is spanned by effective curves. It is straightforward to calculate that the dual cone is spanned by $l_i$, $f-l_i$, and $l_i+l_j-2f$ for $j\neq i$. The first class is clearly effective; the second is the class of a contracted curve in the cocenter of the flop. To see that the third type of class is effective, observe that it is the class of the proper transform a curve of type $(1,1)$ in a surface  $ S_{ij} \subset (\PP^1)^4$ (a section of the $(i,j)^{th}$ projection  $\pi_{ij} : (\PP^1)^4 \arrow \PP^1 \times \PP^1$) which intersects the base curve $C$ in two points (counted with multiplicity). To see that such curves exist, observe that any two points in $\PP^1 \times \PP^1$ are joined by a $(1,1)$ curve, so it is enough to find a surface $S_{ij}$ intersecting $C$ in at least two points. For this, notice that $C$ has genus 13, whereas if $\pi{kl}$ is any projection onto a factor $\PP^1 \times \PP^1$, then $\pi_{kl}(C)$ has bidegree $(6,6)$, hence by adjunction it has arithmetic genus $\frac{1}{2} \left( 4\cdot 6 + 4 \cdot 6 +2 \right) = 25$, and therefore must have singular points. Now let $\{k,l\} = \{ 1,2,3,4\} \setminus \{i,j\}$, let $p$ be any singular point of $\pi_{kl}(C)$, and let $S_{ij} = \pi_{kl}^{-1}(p)$; then $S_{ij}$ intersects $C$ in at least two points (counted with multiplicity).
\end{proof}

\bibliographystyle{plain}

\end{document}